\newcounter{saveenumerate}
\newcommand{\enumeratext}[1]{%
\setcounter{saveenumerate}{\value{enum\romannumeral\the\@enumdepth}}
\end{enumerate}
#1
\begin{enumerate}
\setcounter{enum\romannumeral\the\@enumdepth}{\value{saveenumerate}}%
}
\definecolor{darkgreen}{RGB}{65,165,118}
\definecolor{darkred}{RGB}{180,0,0}
\definecolor{darkblue}{RGB}{0,0,180}
\definecolor{ocre}{RGB}{204,119,34}     
\definecolor{marro}{RGB}{103,64,58}
\long\def\candrop#1{{\noindent\color{darkred}\textit{{\small{[#1]}}}}}
\newcommand{\mynote}[1]{%
  \ifshowmynotecmd
   \noindent\begingroup\footnotesize\textcolor{blue}{[Carles: #1]}\endgroup%
  \fi}
\newif\ifshowmynotecmd \showmynotecmdtrue
\newenvironment{blue}{\color{darkblue}\ttfamily\footnotesize}{}
\DeclareMathAlphabet\EuR{U}{eur}{m}{n}
\SetMathAlphabet\EuR{bold}{U}{eur}{b}{n}
\newcommand{\curs}{\EuR}
\newcommand{\Sets}{\curs{Sets}}
\newcommand{\domd}{\mathbf{D}}
\newcommand{\loce}{\text{\bbfamily E}}
\newcommand{\loch}{\text{\bbfamily H}}
\newcommand{\lock}{\text{\bbfamily K}}
\newcommand{\locl}{\text{\bbfamily L}}
\newcommand{\locm}{\text{\bbfamily M}}
\newcommand{\locn}{\text{\bbfamily N}}
\newcommand{\defeq}{\overset{\text{\textup{def}}}{=}}
\newcommand{\defin}{\defeq}
\newcommand{\Id}{\textup{Id}}
\newcommand{\cala}{\mathcal{A}}
\newcommand{\calb}{\mathcal{B}}
\newcommand{\calc}{\mathcal{C}}
\newcommand{\cale}{\mathcal{E}}
\newcommand{\calm}{\mathcal{M}}
\newcommand{\Map}{\operatorname{Map}\nolimits}
\newcommand{\Hom}{\operatorname{Hom}\nolimits}
\newcommand{\Aut}{\operatorname{Aut}\nolimits}
\newcommand{\aut}{\operatorname{aut}\nolimits}
\newcommand{\Out}{\operatorname{Out}\nolimits}
\newcommand{\Mor}{\operatorname{Mor}\nolimits}
\newcommand{\nv}[1]{|#1|} 
\newcommand{\cj}[1]{\7{c}_{#1}}    
\newcommand{\frakx}{\mathfrak{X}}
\newcommand{\incl}{\textup{incl}}
\newcommand{\proj}{\operatorname{proj}\nolimits}
\newcommand{\op}{^{\textup{op}}}
\let\oldcirc=\circ
\renewcommand{\circ}{\mathchoice
    {\mathbin{\scriptstyle\oldcirc}}{\mathbin{\scriptstyle\oldcirc}}
    {\mathbin{\scriptscriptstyle\oldcirc}}
    {\mathbin{\scriptscriptstyle\oldcirc}}}
\newcommand{\conj}[2]{\{\,#1\,|\,#2\,\}}
\newcommand{\longleft}[1]{\;{\leftarrow%
\count255=0 \loop \mathrel{\mkern-6mu}%
    \relbar\advance\count255 by1\ifnum\count255<#1\repeat}\;}
\newcommand{\longright}[1]{\;{\count255=0 \loop \relbar\mathrel{\mkern-6mu}%
    \advance\count255 by1\ifnum\count255<#1\repeat\rightarrow}\;}
\newcommand{\Right}[2]{\overset{#2}{\longright#1}}
\newcommand{\RIGHT}[3]{\mathrel{\mathop{\kern0pt\longright#1}
        \limits^{#2}_{#3}}}
\newcommand{\Left}[2]{{\buildrel #2 \over {\longleft#1}}}
\newcommand{\LEFT}[3]{\mathrel{\mathop{\kern0pt\longleft#1}\limits^{#2}_{#3}}
}
\newcommand{\dRIGHT}[3]{\mathrel{%
   \mathop{\vcenter{\baselineskip=0pt\hbox{$\kern0pt\longright#1$}%
   \hbox{$\kern0pt\longright#1$}}}\limits^{#2}_{#3}}}
\newcommand{\LRIGHT}[3]{\mathrel{%
   \mathop{\vcenter{\baselineskip=0pt\hbox{$\kern0pt\longleft#1$}%
   \hbox{$\kern0pt\longright#1$}}}\limits^{#2}_{#3}}}
\newcommand{\RLEFT}[3]{\mathrel{%
   \mathop{\vcenter{\baselineskip=0pt\hbox{$\kern0pt\longright#1$}%
   \hbox{$\kern0pt\longleft#1$}}}\limits^{#2}_{#3}}}
\newcommand{\onto}[1]{\;{\count255=0 \loop \relbar\joinrel
    \advance\count255 by1
    \ifnum\count255<#1 \repeat \twoheadrightarrow}\;}
\newcommand{\con}[2]{\{\, {#1} \,|\,  {#2}  \,\}}
\newcommand{\Con}[2]{\bigl\{\,  {#1}  \,\bigm|\,  {#2}   \,\bigr\}}
\newcommand{\calaut}{\operatorname{\cala ut}\nolimits}
\newcommand{\Der}{\operatorname{Der}\nolimits}
\newcommand{\4}[1]{\widehat{#1}}
\newcommand{\5}[1]{\widetilde{#1}}
\newcommand{\7}[1]{\boldsymbol{#1}}
\newcommand{\9}[1]{{}^{#1}\kern-1pt{}}
 \newcounter{enumi_saved}
     \newcounter{enumii_saved}
\theoremstyle{plain}
\newtheorem{thm}{Theorem}[section]
\newtheorem{prop}[thm]{Proposition}
\newtheorem{lmm}[thm]{Lemma}
\newtheorem{cor}[thm]{Corollary}
\theoremstyle{definition}
\newtheorem{rmk}[thm]{Remark}{\rm}
\newtheorem{defi}[thm]{Definition}
\newtheorem{expl}[thm]{Example}{\rm}
{\rm}
{\rm}
\newtheorem{nota}[thm]{Notation}{\rm}
\numberwithin{equation}{section}
\title{An extension theory for partial groups}
\author{Carles Broto and Alex Gonzalez}
\begin{document}

\begin{abstract}
Partial groups are a natural generalization of discrete groups recently introduced by Chermak in connection with the theory of fusion systems. In this paper we develop an extension theory for partial groups based in the classical 
theory of fibre bundles of simplicial sets. 
\end{abstract}

\maketitle                                      

The notion of partial group was introduced by Chermak \cite{Chermak} as an alternative to linking 
and  transporter systems \cite{BLO2,OV1} that might be more appealing to group theorists. 
Roughly speaking, a partial group is a set with an associative product which is only defined on 
certain tuples of elements and an inversion. The aim of this paper is to establish an
extension theory for partial groups that will be useful in order to understand more sophisticated 
structures \cite{BG2,Go}.

The extension theory of partial groups will be based on the theory 
of fibre bundles of simplicial sets \cite{BGM,Curtis,GJ,May}.
Indeed, to a partial group one may associate a simplicial set, 
built as a bar construction (cf.\ Example~\ref{BG})
which in turn determines explicitly the original partial group. 
Simplicial sets obtained in this way are 
easily characterized. For this reason we have found it appropriate for this paper
to define a partial group as a simplicial set 
with a unique vertex and higher simplices uniquely determined by their spines 
(see Definition~\ref{def.part.mon} for details). Likewise, homomorphisms between partial groups are arbitrary 
simplicial maps. 
It becomes clear that the bar construction provides an 
equivalence between this definition and Chermak's definition \cite[2.1]{Chermak}. 

Given a partial group $\locm$ we identify two characteristic subgroups, $N(\locm)$ and $Z(\locm)$. The first 
consists of all elements that define inner automorphisms of $\locm$, while $Z(\locm)$ is the subgroup of those 
for which the given inner automorphism is the identity map. $Z(\locm)$ is the center 
of $\locm$ and we call $N(\locm)$ the normalizer of $\locm$. 
We identify the outer automorphisms
$\Out(\locm)$  with the automorphisms of $\locm$ modulo 
$N(\locm)$, and we get an exact sequence of finite groups
\begin{equation*}
 1\Right2{} Z(\locm)  \Right2{} N(\locm)  \Right2{} \Aut(\locm)  \Right2{} \Out(\locm)  \Right2{} 1
\end{equation*}
(see Proposition~\ref{Nloclgroup}). 
This allows the description of the simplicial monoid of self-equivalences of a partial group $\locm$, that 
we denote $\aut(\locm)$. A self-equivalence is really an automorphism of $\locm$ and we obtain 
\begin{equation}\label{aut}
 \pi_i(\aut(\locm)) \cong \left\{
\begin{array}{ll}
\Out(\locm), & i = 0\\
Z(\locm), & i = 1\\
\{0\}, & i \geq 2\\
\end{array}
\right.
\end{equation}

In this more general case, as it happens in the case of finite groups, the simplicial set of self-equivalences has at most two non-trivial
homotopy groups. This fact is the ultimate reason why the classification of extensions of partial groups resembles the classical results 
for extensions of finite groups. 

We define an extension of partial groups to be a fibre bundle $\locm\Right0{}\loce\Right0{}\loch$. 
We first obtain a description 
of $\loce$ in terms of $\locm$ and $\loch$  and then show that $\loce$ is indeed a partial 
group if $\locm$ and $\loch$ were so (see Theorem~\ref{twistedn-simpl}). 
Fixing $\locm$,  $\loch$, and an outer action of $\loch$ on $\locm$, that is, a homomorphism 
$\alpha\colon\loch\Right1{}\Out(\locm)$, 
 we show that the set $\cale(\locm,\loch,\alpha)$ of equivalence classes of extensions is nonempty
provided a certain obstruction class $[\kappa] \in H^3(\loch; Z(\locm))$ vanishes.
Furthermore, assuming  $\cale(\locm,\loch,\alpha)\neq\emptyset$, the group 
$H^2(\loch; Z(\locm))$ acts freely and transitively on $\cale(\locm,\loch,\alpha)$ 
(see Theorem~\ref{classext}).

We also establish a classification theorem for sections of extensions. We  distinguish 
between arbitrary sections and regular sections. Given an extension
$\locm \Right1{} \loce \Right1{\tau} \loch$,  a section is called regular if it has image in $N(\loce)$. 
An extension that admits a regular section is called regular split. Regular split extensions are 
isomorphic to semi-direct products. 
We show that equivalence classes of sections of regular split extensions are classified by the first non-abelian cohomology group
$H^1(\loch; \locm)$. 
\medskip

\textbf{Organization of the paper.} In Section \ref{Spg} we review some basic concepts related to simplicial sets. 
Most of the work in this section is devoted to analyze homotopies between homomorphisms of partial groups, 
as they play a key role in this paper. In Section 3 we analyze the space of self-homotopy equivalences of a partial
group and we prove equation \eqref{aut}.
Section \ref{Sfibund} develops the theme of fibre bundles and extensions of partial groups. 
Finally, in Section \ref{regsplit} we introduce regular split extensions of partial groups and 
we prove the classification theorem for sections.

\medskip

\textbf{Acknowledgements.} The authors would like to express their gratitude to Andy Chermak for his help and insight. Their recognition is also extended to the Departments of Mathematics of both Kansas State University and the Universitat Aut\`onoma de Barcelona, as well as the Barcelona Algebraic Topology Group for their hospitality and their support to the authors throughout the preparation of this work. Both authors were partially supported by the 
MINECO-FEDER grants MTM2013-42293-P and  MTM2016-80439.


\section{Simplicial sets}
This section contains a brief discussion about simplicial sets focused on our needs. We refer the interested reader to \cite{May,GJ} for the general theory of simplicial sets. Let $\Delta$ be the category with objects the sets $[n] = \{0, 1, \ldots, n\}$, for $n \geq 0$. Morphisms among them are compositions of the non-decreasing maps: $d^i\colon [n-1]\Right0{} [n]$, $0\leq i \leq n$, defined $d^i(j)=j$ if $j<i$ and $d^i(j)=j+1$ if $j\geq i$, and $s^i\colon [n+1]\Right0{} [n]$ defined $s^i(j)=j$ if $j\leq i$ and $s^i(j)=j-1$ if $j> i$.

\begin{defi}\label{defisimplicial}

A \textit{simplicial set} is a functor $X\colon \Delta \op \longrightarrow \Sets$. Equivalently, a simplicial set is a sequence of sets $\{X_n = X([n])\}_{n \geq 0}$, together with structural maps
$$
d_i = X(d^i)\colon X_n\Right0{} X_{n-1}\,, \quad \mbox{and} \quad s_i=X(s^i)\colon X_n\Right0{} X_{n+1}\,
$$
called \emph{face maps} and \emph{degeneracies} respectively, satisfying the following \emph{simplicial identities}:
$$
\begin{array}{ll}
d_i d_j  = d_{j-1} d_i  &  i < j \\
d_i s_j  = s_{j-1} d_i &  i < j \\
d_j s_j = 1 = d_{j +1} s_j &  \\
d_i s_j = s_j d_{i-1} &  i > j + 1 \\
s_i s_j = s_{j+1} s_i &   i\leq j\,.
\end{array}
$$
The elements of $X_n$ are called the \emph{$n$-simplices} of $X$. Maps between simplicial sets are natural transformations $f\colon X\Right0{} Y$, in other words, collections of maps $f_n\colon X_n \Right0{}Y_n$ that commute with face maps and
degeneracies.

\end{defi}

\begin{expl}\label{nsimplex}

Let $n \geq 0$. Associated to $[n]$ there is a simplicial set $\Delta[n]$, called the \emph{standard $n$-simplex}, which is defined as the functor $\Hom_{\Delta}(-, [n]) \colon \Delta^{\op} \to \Sets$, see \cite[Example I.1.7]{GJ}. Equivalently, $\Delta[n]$ is characterized by the existence of a distinguished $n$-simplex, denoted by $\iota_n$ (corresponding to the identity map $[n] \to [n]$), and by the fact that every other simplex in $\Delta[n]$ is obtained from $\iota_n$ by applying to it a composition of face and degeneracy operations.

For instance,  the standard $1$-simplex $\Delta[1]$ plays a crucial role in the definition of homotopy below. It has a unique non-degenerate $1$-simplex $\iota$ and two vertices $v_0$ and $v_1$ obtained as 
$v_1=d_0(\iota)$ and $v_0=d_1(\iota)$. It has the homotopy type of the unit interval. The vertices can be represented by simplicial maps
$$
v_0 \colon \Delta[0]\Right0{}\Delta[1] \qquad \mbox{and} \qquad v_1\colon \Delta[0]\Right0{}\Delta[1].
$$

More generally, given a simplicial set $X$ and $n \geq 0$, one can represent each $n$-simplex $\omega \in X_n$ as the simplicial map $\omega \colon \Delta[n] \to X$, which maps the unique non-degenerate $n$-simplex $\iota_n$ of $\Delta[n]$ to the $n$-simplex $\omega \in X_n$ (note that this completely determines the map $\omega$). We shall tacitly use this fact throughout the rest of the paper.

\end{expl}

\begin{expl}\label{cartprod}

Let $X$ and $Y$ be simplicial sets. The \emph{cartesian product of $X$ and $Y$}, denoted by $X \times Y$, is the simplicial set with $n$-simplices $(X \times Y)_n = X_n \times Y_n$. Face and degeneracies on $X \times Y$ are defined componentwise. That is, given $(\omega, \sigma) \in (X \times Y)_n$ and $0 \leq i \leq n$, we define $d_i(\omega, \sigma) = (d_i^X(\omega), d_i^Y(\sigma))$ and $s_i(\omega, \sigma) = (s_i^X(\omega), s_i^Y(\sigma))$.

\end{expl}

\begin{defi}

Let $X$ and $Y$ be simplicial sets, and let $f, g \colon Y \to X$ be simplicial maps. A \emph{homotopy from $g$ to $f$} is a map
$$
F\colon Y\times \Delta[1] \Right2{} X
$$
such that $f= F\circ (\Id\times v_0)$ and $g= F\circ (\Id\times v_1)$ (cf.~\cite[pag.\ 24]{GJ}).

\end{defi}

The following operators are of special interest to us. Some definitions are borrowed from \cite[Section 6]{BLO6}.

\begin{defi}

Let $X$ be a simplicial set. For each $1 \leq i \leq n$, let $e_i^n\colon X\Right0{} X_1$ be the \emph{$i$th edge map} induced by the non-decreasing function $[1]\Right0{} [n]$ with image ${i-1,i}$.
\begin{itemize}

\item The \emph{spine operator} $\7e^n$, $n\geq1$, is defined as
$$
\7e^n  = (e^n_1,e^n_2,\dots, e^n_n) \colon X_n \Right4{}  X_1 \times \ldots \times X_1  \,. 
$$
\item The \emph{back edge} or \emph{product operator} $\Pi^n$, $n\geq1$,  is the map
$$
\Pi^n   \colon X_n \Right4{}  X_1 \,,  \qquad n\geq1\,,
$$
induced by the function $ [1] \Right0{} [n]$ with image $\{0,n\}$. 
Equivalently, $\Pi^1 = \Id$ and $\Pi^n= d_1\circ d_1\circ \dots\circ d_1$, $n-1$ times. We   simply write $\Pi = \Pi^n$ when there is no possible
confusion about the degree $n$.
\end{itemize}

\end{defi}

The following example illustrates and motivates the above definitions.

\begin{expl}\label{BG}
The bar construction on a group $G$, denoted by $BG$, is a simplicial set with $n$-simplices 
$B_nG=\Con{[g_1|g_2|\dots|g_n]}{g_i\in G} = G^n$, face maps
$$
 d_i([g_1| \dots | g_n]) = 
 \begin{cases}
[g_2| \ldots | g_n]                                       & \text{$i=0$, } \\
   [g_1| \ldots | g_i g_{i+1} | \ldots |  g_n ]  & \text{$1 \leq i \leq n-1$,} \\
[g_1| \ldots| g_{n-1}]                                   &   \text{$i=n$.}
\end{cases}
 $$
and degeneracies
$$
s_i([g_1| \ldots| g_n]) = [g_1| \ldots| g_i| 1|g_{i+1}|\ldots| g_n] \mbox{, } i = 0, \ldots, n\,.
$$
In this case, the edges operator is $\7e^n([g_1|g_2|\dots|g_n]) =(g_1,g_2,\dots,g_n)$, while the product operator is $\Pi ([g_1|g_2|\dots|g_n]) =g_1g_2\dots g_n$. Thus for each simplex, the product operator is precisely the product of its front edges. Notice that in this case the edges operator is actually a bijection for every $n\geq2$.
\end{expl}

The concepts of homotopy groups and homology groups are defined in the setting of simplicial sets. 
There is a realization functor that associates a topological space $|X|$ to a simplicial set $X$. In particular, 
homotopy groups and homology groups of $X$ coincide with that of $|X|$, defined geometrically. 
A simplicial map $f\colon X\Right0{} Y$ is a weak equivalence if it induces isomorphisms on homotopy groups.
Details can be easily find in the literature (cf.~\cite[Ch.~I]{GJ}\cite[Ch.I, II, and III]{May}).

\begin{expl}\label{nerve} The nerve of a small category $\calc$ is a simplicial set $N\calc$ with $n$-simplicies 
the  strings 
$ c_o\Left0{\alpha_1}c_1\Left0{\alpha_2}\dots \Left0{}  c_{n-1} \Left0{\alpha_n}c_n $
of $n$ composable  morphisms of $\calc$, face maps
$$
 d_i( c_o\Left0{\alpha_1}c_1\Left0{\alpha_2}\dots \Left0{}  c_{n-1} \Left0{\alpha_n}c_n ) =
\begin{cases}
   c_1\Left0{\alpha_2}\dots \Left0{}  c_{n-1} \Left0{\alpha_n}c_n    &  \text{$i=0$, } \\
   c_o\Left0{\alpha_1}   \dots  c_{i-1}\Left2{\alpha_i\alpha_{i+1}}c_{i+1}\dots\Left0{\alpha_n}c_n    & \text{$1 \leq i \leq n-1$,}\\
    c_o\Left0{\alpha_1}c_1\Left0{\alpha_2}\dots \Left0{\alpha_{n-1}}  c_{n-1}     &   \text{$i=n$,}
\end{cases}
$$
and degneracies 
$$s_i( c_o\Left0{\alpha_1}c_1\Left0{\alpha_2}\dots \Left0{}  c_{n-1} \Left0{\alpha_n}c_n)=
     c_o\Left0{\alpha_1} \dots  \Left0{\alpha_i} c_i \Left0{\Id} c_i  \Left0{\alpha_{i+1}} 
              \dots  c_{n-1} \Left0{\alpha_n}c_n\,.
$$
Usually, in the literature the simplicial set above is the nerve of the opposite category $\calc\op$ 
(cf.~\cite[Example I.1.4]{GJ}). 
It is well known that the nerve of a category and of its opposite cartegory are weak homotopy 
equivalent simplicial sets \cite[Section 1]{Qu2}. 
Our choice is made in consistency with the bar construction when the cartegory $\calc$ is the category $\calb G$
of a group $G$, namely, the category with one object and endomorphisms the group $G$. In fact, our definitions 
make $N(\calb G)\cong BG$ isomorphic simplicial sets.

\end{expl}


\section{Partial groups}\label{Spg}

We now introduce the concept of partial group, from a simplicial point of view. As we show, our definition is equivalent to Chermak's \cite[definition 2.1]{Chermak}.

\begin{defi}\label{def.part.mon}

A \emph{partial monoid} is a nonempty simplicial set  $\locm$ satisfying
\begin{enumerate}[\rm (PM1)]

\item $\locm$ is reduced ($\locm_0$ consists of a unique vertex). 

\item The spine operator $\7e^n \colon \locm_n\Right0{} (\locm_1)^n$ is injective for all $n\geq1$.

\end{enumerate}
A \emph{homomorphism of partial monoids} is a simplicial map.
\end{defi}

One can easily check that the above definition is equivalent to \cite[Definition 2.1]{Chermak}. Indeed, using Chermak's notation, we can regard the partial monoid $\locm$ as having underlying set $\calm=\locm_1$, the set of edges of $\locm$, and the domain of multiplication is given by
$$
\domd = \{(\emptyset)\}\cup\coprod_{n\geq1} \con{(x_1,\dots,x_n)\in(\locm_1)^n}{ \exists \sigma \in \locm_n,\,, \7e^n(\sigma)=
(x_1,\dots,x_n)}\,.
$$
The multiplication $\Pi$ corresponds to the product operator of $\locm$, and the 
unit is $1=s_0(v)\in \locm_1$, where $v$ stands for the unique vertex of $\locm$. 
Conversely, given a partial monoid defined as in \cite{Chermak}, the bar construction, 
defined exactly as in Example~\ref{BG}, is a simplicial set that satisfies conditions (PM1) and (PM2).
\smallskip

The above correspondence suggests the following notational conventions that we   use henceforth.
For a partial monoid $\locm$, a simplex $\7x\in\locm_n$ with spine 
$\7e^n(\7x)=(x_1,x_2, \dots, x_n)$  
is denoted
$$\7x = [x_1|x_2|\dots|x_n]\,,
$$
thus using the same notation as for the bar construction on a group $G$.
Concerning the multiplication, we   write $1=s_0(v)$, where $v$ is the vertex of $\locm$, and 
$$
\Pi [x_1|x_2|\dots|x_n] = x_1\cdot x_2\cdot \ldots \cdot x_n\in \locm_1\,.
$$
To be more precise, the symbol $ x_1\cdot x_2\cdot \ldots \cdot x_n$ means that the simplex 
$[x_1|x_2|\dots|x_n]$ exists in $\locm$ and 
$\Pi [x_1|x_2|\dots|x_n] = x_1\cdot x_2\cdot \ldots \cdot x_n$.

\begin{defi}\label{pgroup}

An \emph{inversion} in a partial monoid $\locm$  is an anti-involution $\nu\colon \locm\Right0{}\locm$ (that is, a simplicial map $\nu\colon\locm\Right0{}\locm\op$ with inverse $\nu^{-1} = \nu\op$), satisfying the following conditions for each $\7u \in \locm_n$, $n\geq1$:
\begin{enumerate}[\rm ({I}1)]

\item There is a simplex $[\nu(\7u)|	\7u] \in \locm_{2n}$; and

\item $\Pi[\nu(\7u)|\7u] = 1$.

\end{enumerate}
A \emph{partial group} is a partial monoid together with an inversion. A \emph{partial subgroup} is a simplicial subset $\loch\subseteq \locm$ satisfying the following conditions.
\begin{enumerate}

\item  If $\7x=[x_1|x_2|\dots|x_n]\in \locm_n$, and each $[x_i]$ is in $\loch_1$, then $\7x\in\loch_n$.

\item The inversion and the product of $\locm$ are internal in $\loch$.

\end{enumerate}
In this case, we denote $\loch\leq \locm$. If $\loch=BG$ is the classifying space of a group $G$, we   simply say that $G$ is a subgroup of $\locm$, and loosely write $G\leq \locm$, instead of $BG\leq \locm$.

\end{defi}

Continuing with our notational conventions, we won't ususally specify the map $\nu$ 
providing the inversion of a partial group, and instead, we write $\7u^{-1}=\nu(\7u)$. 
The simplicial identities imply that $[u_1|\ldots|u_n]^{-1} = [u_n^{-1}| \ldots | u_1^{-1}]$.

It follows from simplicial identities that if a partial monoid $\locm$ admits an inversion, then this is unique. 
Also, a homomorphism of partial groups $f \colon \locm \to \loch$ satisfies 
$f(1_{\locm}) = 1_{\loch}$ and $f(\7w^{-1}) = (f(\7w))^{-1}$. These properties are also shown in 
\cite[2.2, 2.3]{Chermak}.

\begin{rmk}
Partial groups are not generally Kan complexes, that is, fibrant objects in the Quillen model category of simplicial sets.  
It turns out that a partial group is really a group  if and only if the spine operators are bijections 
for all $n\geq1$, in which case it is a Kan complex. (See \cite[1\S1]{May}\cite[I.3]{GJ}.)
As a consequence, the homotopy relation between homomorphisms fails to be transitive in general.
\end{rmk}

%
%
%
%
%
%
%
%


We start here a careful analysis of homotopies between homomorphisms of partial groups and  their relation with the 
concept of conjugation. This leads at the end of this section to the definition of the groups of automorphisms and 
outer automorphisms of partial groups.

\begin{lmm}\label{maps}

Let $f\colon X \Right0{} \locm$ be a simplicial map where $\locm$ is a partial monoid. Then, $f$ is determined by its restriction to the $1$-skeleton of $X$.

\end{lmm}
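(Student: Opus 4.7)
The plan is to compare two simplicial maps $f, g \colon X \to \locm$ that agree on the $1$-skeleton of $X$ and show that $f = g$. Since $\locm_0$ is a single vertex by (PM1), any simplicial map into $\locm$ is forced to send $X_0$ to that vertex, so the only real content is at the level of $n$-simplices for $n \geq 1$.

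Let $\sigma \in X_n$ with $n \geq 1$ be arbitrary. The key observation is that each edge map $e_i^n \colon X_n \to X_1$ is induced by the morphism $[1] \to [n]$ with image $\{i-1, i\}$, hence it is a composition of face operators. Since simplicial maps commute with face operators, they commute with the edge maps, and therefore with the spine operator. Consequently
\begin{equation*}
\7e^n(f(\sigma)) = (f(e_1^n(\sigma)), \ldots, f(e_n^n(\sigma))) = (g(e_1^n(\sigma)), \ldots, g(e_n^n(\sigma))) = \7e^n(g(\sigma)),
\end{equation*}
where the middle equality uses that $e_i^n(\sigma) \in X_1$ and $f|_{X_1} = g|_{X_1}$.

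Now apply axiom (PM2): the spine operator $\7e^n \colon \locm_n \to (\locm_1)^n$ is injective. From the computation above we conclude that $f(\sigma) = g(\sigma)$. Since $\sigma$ was arbitrary and $n$ arbitrary, $f = g$. This shows that a simplicial map into $\locm$ is completely determined by its values on $X_1$ (and vacuously on $X_0$), which is the assertion of the lemma. There is no serious obstacle here; the whole argument is a direct application of naturality together with the injectivity axiom (PM2).
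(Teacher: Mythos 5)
Your proof is correct and follows exactly the paper's argument: both reduce to the fact that simplicial maps commute with the spine operator and agree on spines when they agree on the $1$-skeleton, then invoke the injectivity of $\7e^n$ from (PM2). No differences worth noting.
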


\begin{proof}
Let $g\colon X\Right0{}\locm $ be another simplicial map with $g_1 = f_1$. 
Since there is a unique vertex in $\locm$, it is also clear that $g_0 = f_0$. 
Assume $n\geq2$ and let $\omega \in X_n$ be an arbitrary $n$-simplex with 
$\7e(\omega) = (x_1, \ldots, x_n) \in X_1 \times \ldots \times X_1$. 
Since both $f$ and $g$ are simplicial, we have
$$
\7e^{n}(f_n(\omega)) = 
(f_1(x_1), \ldots, f_1(x_n))   = (g_1(x_1), \ldots, g_1(x_n)) = \7e^{n}(g_n(\omega))\,,
$$
hence $f_n(\omega) = g_n(\omega)$ by (PM2), and $f=g$.
\end{proof}

%
%
%
%
%
%
%

\begin{lmm}\label{homot=conj}

Let  $f,g \colon \loch\Right0{}\locm$ be two homomorphisms of partial monoids. Then, the following holds.
\begin{enumerate}[\rm(a)]

\item A homotopy $F\colon \loch\times \Delta[1]\Right0{}\locm$ from $g$ to $f$ is uniquely determined by the $1$-simplex $\eta=F(1,\iota)\in \locm_1$, that satisfies $\eta\cdot g(x) = f(x)\cdot \eta$, for all $x\in \loch_1$.

\item A simplex $\eta\in\locm_1$ determines a homotopy from $g$ to $f$ if and only if,
for each simplex $[x_1|\dots|x_n]\in \loch$, the following conditions are satisfied:

\begin{enumerate}[\rm (i)]

\item $\7w_k = [f(x_1)|\dots|f(x_{k})|\eta|g(x_{k+1})|\dots |g(x_n)]\in \locm$ for each $k = 0, \ldots, n$; and

\item $\Pi(\7w_0) = \Pi(\7w_1) = \ldots = \Pi(\7w_n)$.

\end{enumerate}

\end{enumerate}
In addition, if $\locm$ and $\loch$ are partial groups, then the following holds.
\begin{enumerate}[\rm(a)]\setcounter{enumi}{2}

\item A homotopy $F\colon \loch\times \Delta[1]\Right0{}\locm$ is uniquely determined by one of the functions, $F_0 = F|_{\loch \times \{v_0\}}$ or $F_1 = F|_{\loch \times \{v_1\}}$, together with $\eta=F(1,\iota)\in\locm_1$.

\end{enumerate}

\end{lmm}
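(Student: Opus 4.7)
The overall plan is to invoke Lemma~\ref{maps} throughout: any simplicial map into $\locm$ is determined by its values on the $1$-skeleton, and by~(PM2) an $n$-simplex of $\locm$ is uniquely determined by its spine. With this in hand, (a), the forward half of~(b), and the uniqueness in~(c) all reduce to spine computations on carefully chosen low-dimensional simplices of $\loch\times\Delta[1]$.

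For~(a), on the degenerate $1$-simplices $(x,s_0 v_0)$ and $(x,s_0 v_1)$, $F$ is forced to equal $f(x)$ and $g(x)$, so the only freedom is $F(x,\iota)$. I consider the two $2$-simplices $(s_0 x, s_1\iota)$ and $(s_1 x, s_0\iota)$ in $\loch\times\Delta[1]$, whose componentwise spines are $\bigl((1,\iota),(x,s_0 v_1)\bigr)$ and $\bigl((x,s_0 v_0),(1,\iota)\bigr)$, and both of whose $d_1$-faces equal $(x,\iota)$. Reading off the back edges of their images under $F$ simultaneously yields $F(x,\iota)=\eta\cdot g(x)=f(x)\cdot\eta$, which both shows that $F$ is determined by $\eta$ and establishes the intertwining relation.

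For~(b), the forward implication generalises the above computation: given $[x_1|\dots|x_n]\in\loch_n$ and $0\leq k\leq n$, apply $F$ to the $(n+1)$-simplex $\bigl(s_k[x_1|\dots|x_n],\alpha_k\bigr)$, where $\alpha_k\in\Delta[1]_{n+1}$ is the non-decreasing map sending the first $k+1$ vertices to $0$ and the remaining $n-k+1$ to $1$. A componentwise spine check identifies the image with $\7w_k$ by (PM2), giving~(i); the back edge of the source is $(x_1\cdots x_n,\iota)$ independently of $k$, giving~(ii). Conversely, assuming $\eta$ satisfies~(i) and~(ii), I define $F$ on $1$-simplices by the formulas of~(a) (unambiguous by~(ii) at $n=1$), and on a general $n$-simplex $(\omega,\beta_k^n)$ of $\loch\times\Delta[1]$, where $\beta_k^n\in\Delta[1]_n$ is determined by the number $k$ of zeros, by $F(\omega,\beta_0^n)=g(\omega)$, $F(\omega,\beta_{n+1}^n)=f(\omega)$, and $F(\omega,\beta_k^n):=d_k(\7w_k^\omega)$ for $1\leq k\leq n$, with the required $\7w_k^\omega\in\locm_{n+1}$ provided by~(i). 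The main obstacle is verifying that this assignment commutes with the face and degeneracy operators; by~(PM2) this reduces to matching spines, and the associativity identities needed at the $d_k$-faces are precisely~(ii) applied to the $2$-simplex subproducts $[x_j|x_{j+1}]$ of $\omega$.

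For~(c), suppose $F,F'\colon\loch\times\Delta[1]\to\locm$ are homotopies with $F_0=F'_0=f$ and common $\eta$, and let $g=F_1$, $g'=F'_1$; by~(a) it suffices to show $g=g'$. From~(a), $\eta\cdot g(x)=f(x)\cdot\eta=\eta\cdot g'(x)$, and the substantive step is left cancellation by~$\eta$, where the partial group structure enters. Applying~(I1) to $\7u=[\eta|g(x)]\in\locm_2$ produces $[g(x)^{-1}|\eta^{-1}|\eta|g(x)]\in\locm_4$ with $\Pi=1$, whose face $d_0$ is $[\eta^{-1}|\eta|g(x)]\in\locm_3$. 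Computing the back edge $\Pi=d_1\circ d_1$ of this $3$-simplex in the two ways permitted by the simplicial identity $d_1 d_1=d_1 d_2$ (via $d_1$ first, using $\eta^{-1}\cdot\eta=1$; or via $d_2$ first) yields $g(x)=\eta^{-1}\cdot(\eta\cdot g(x))$. The same identity holds for $g'(x)$, and $\eta\cdot g(x)=\eta\cdot g'(x)$ then forces $g(x)=g'(x)$. The variant starting from $F_1$ is handled by the mirror argument, using the $4$-simplex $[f(x)|\eta|\eta^{-1}|f(x)^{-1}]$ obtained from~(I1) applied to $\nu[f(x)|\eta]$ to extract the right-cancellation identity $f(x)=(f(x)\cdot\eta)\cdot\eta^{-1}$.
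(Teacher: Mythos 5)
Your proposal is correct and follows essentially the same route as the paper: part (a) via the two $2$-simplices $(s_0x,s_1\iota)$ and $(s_1x,s_0\iota)$, part (b) via the prism decomposition of $\Delta[n]\times\Delta[1]$ into the simplices $(s_k\7x,\alpha_k)$ together with a spine/back-edge computation, and part (c) via cancellation by $\eta$. The only differences are at the level of detail: you construct the homotopy directly on simplices of the product where the paper cites May's combinatorial description of homotopies, and you spell out the left/right cancellation in (c) using (I1) and the simplicial identity $d_1d_1=d_1d_2$, a step the paper leaves implicit.
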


\begin{proof}

By Examples \ref{nsimplex} and \ref{cartprod}, we know that the non-degenerate $1$-simplices of $\loch\times \Delta[1]$ are of one of the forms below:
$$
(1,\iota)\qquad(x, \iota)\qquad (x, s_0(v_0))\qquad (x, s_0(v_1))
$$
where $x$ runs over all non-degenerate 1-simplices of $\loch$.
According to Lemma~\ref{maps}, the map $F$ is determined by the images of these $1$-simplices, that is 
$
F(1,\iota)=\eta$, 
$F(x, s_0(v_0)) = f(x)$, 
$F(x, s_0(v_1))  = g(x)$, and $F(x, \iota)$.
We only need to check that this last  is also determined by $\eta$, $f(x)$ and $g(x)$.

Notice that each $1$-simplex $x \in \loch_1$ determines two non-degenerate $2$-simplices of  $\loch\times\Delta[1]$, namely $\sigma_1=(s_0(x),s_1(\iota))$ and $\sigma_2=(s_1(x),s_0(\iota))$.
The edges of $F(\sigma_1)$ and of $F(\sigma_2)$ are computed using the simplicial identities:
$$
d_0(F(\sigma_1)) = F(d_0(\sigma_1)) = F\bigl(  (d_0s_0(x),d_0s_1(\iota))  \bigr) = F\bigl( (x,s_0(v_1))\bigr) = F_1(x)=g(x).
$$
Similarly, we have $d_0(F(\sigma_2)) = \eta$, and there are identities
$$
\begin{array}{ccc}
d_1(F(\sigma_1))    = F(x,\iota) & \qquad & d_2(F(\sigma_1)) =  \eta\\
d_1(F(\sigma_2))   = F(x,\iota) & & d_2(F(\sigma_2))    = F_0(x) =f(x) \\
\end{array}
$$
Hence, we deduce the following equalities (see Figure~\ref{pic1} below):
$$
F(\sigma_1) = [\eta | F_1(x)] = [\eta | g(x)] \qquad \mbox{and} \qquad F(\sigma_2) = [F_0(x)|\eta ] = [f(x)|\eta].
$$
and then 
\begin{equation*}
 \eta\cdot g(x)  = d_1[\eta | g(x)]  = F(x,\iota) = d_1[f(x)|\eta] = f(x)\cdot \eta\,.
\end{equation*}
This shows that $F(x,\iota)$ is also determined by $\eta$, $f(x)$ and $g(x)$. This proves (a). 
\medskip

\begin{figure}[h]
 \setlength{\unitlength}{0.7mm}
 \begin{picture}(50, 50)(0,0)
\put(5,5){\circle*{2}}
\put(45,5){\circle*{2}}
\put(5,45){\circle*{2}}
\put(45,45){\circle*{2}}
\put(5,5){\line(0,1){40}}
\put(5,5){\line(1,0){40}}
\put(5,5){\line(1,1){40}}
\put(5,45){\line(1,0){40}}
\put(45,5){\line(0,1){40}}
\put(30,15){\tiny$\sigma_1$}
\put(13,35){\tiny$\sigma_2$}
\put(-4,0){\tiny$(v,v_0)$}
\put(43,0){\tiny$(v,v_1)$}
\put(-4,48){\tiny$(v,v_0)$}
\put(43,48){\tiny$(v,v_1)$}
\put(21,2){\tiny$(1,\iota)$}
\put(21,47){\tiny$(1,\iota)$}
\put(-16,25){\tiny$(x,s_0(v_0))$}
\put(47,25){\tiny$(x, s_0(v_1))$}
\put(16,20){\rotatebox{45}{\tiny$(x,\iota)$}}
\end{picture}
\hspace{8em}
%
\begin{picture}(50, 50)(0,0)

\put(5,5){\circle*{2}}
\put(45,5){\circle*{2}}
\put(5,45){\circle*{2}}
\put(45,45){\circle*{2}}

\put(5,5){\line(0,1){40}}
\put(5,5){\line(1,0){40}}
\put(5,5){\line(1,1){40}}
\put(5,45){\line(1,0){40}}
\put(45,5){\line(0,1){40}}

\put(30,15){\tiny$F(\sigma_1)$}
\put(13,35){\tiny$F(\sigma_2)$}

\put(2,0){\tiny$v$}
\put(45,0){\tiny$v$}
\put(2,48){\tiny$v$}
\put(45,48){\tiny$v$}
\put(23,2){\tiny$\eta$}
\put(23,47){\tiny$\eta$}
\put(-5,25){\tiny$f(x)$}
\put(47,25){\tiny$ g(x)$}
\put(16,20){\rotatebox{45}{\tiny$F(x,\iota)$}}

\end{picture}
\caption{Picture of the simplices in 
$(x,\iota)\colon \Delta[1]\times \Delta[1]\subset \loch\times \Delta[1]$ on the left and the image by 
a homotopy $F\colon \loch\times \Delta[1]\Right0{}\locm$ on the right hand side. (We write $v$ for both the vertex of $\loch$ and of $\locm$.)}\label{pic1}
\end{figure}

The proof of part (b) follows from the characterization of simplicial homotopies described in 
\cite[Definition I.5.1]{May}. Assuming the existence of the simplices $\7w_k$ in condition 
(i), we can define functions $h_k\colon \loch_n\Right0{}\locm_{n+1}$ such that 
$$h_k(\7x) = \7w_k =  [f(x_1)|\dots|f(x_{k})|\eta|g(x_{k+1})|\dots |g(x_n)]\in \locm$$
 for each $\7x=[x_1|\dots|x_n]\in \loch$ and $k = 0, \ldots, n$. Using condition (ii),
one can easily check the list of relations in  \cite[Definition I.5.1]{May}. Therefore these
functions define a homotopy from $g$ to $f$, with $F(1,\iota)=\eta$.

Conversely, if $F$ is a homotopy between  $f$ and $g$, with 
$F(1,\iota)=\eta\in\locm_1$, the generalization of the arguments of part (a) 
to higher dimensions
gives the desired results. Indeed, a simplex 
$\7x=[x_1|x_2|\dots|x_{n}]\in\loch_n$, $n\geq2$,  determines a prism 
$\Delta[n]\times \Delta[1]$ in  $\loch\times\Delta[1]$ with non-degenerate
$(n+1)$-simplices $\4{\7x}_k = \bigl(\, s_k(\7x), s_{k+1}^{n-k}s_0^k(\iota) \,\bigr)$,
 $k=0,\dots,n$. Then we set
$$
\7w_k = 
F(\4{\7x}_k) = [f(x_1)|\dots|f(x_{k})|\eta |g(x_{k+1})|\dots|g(x_{n}) ]  \in \locm_{n+1} \,, \quad k=0,\dots,n.
$$
Again, using the simplicial identities listed in Definition \ref{defisimplicial}, we obtain the following  equality $\Pi(\4{\7x}_k) =(d_1)^n \bigl(\, s_k(\7x), s_{k+1}^{n-k}s_0^k(\iota) \,\bigr) = (\Pi(\7x), \iota)\in (\loch\times \Delta[1])_1$, and hence 
$$
\Pi(F(\4{\7x}_k) ) = F(\Pi(\7x), \iota) = f(x_1\cdot x_2\cdot\ldots\cdot x_n)\cdot\eta= 
\eta \cdot g(x_1\cdot x_2\cdot\ldots\cdot x_n)
$$
for all $k=0,\dots, n$. Therefore,  the conditions (i) and (ii) of (b) are satisfied.
\medskip

Part (c) follows from the computations in the proof of part (a). Suppose in addition that $\locm$ and $\loch$ are partial groups and assume $F$ and $G$ are homotopies with $F_0 = G_0$, and $F(1, \iota) = \eta = G(1, \iota) \in \locm_1$. Then, \ref{homot=conj}(a) applies to both, $F$ and $G$, to give 
$$
\eta \cdot F_1(x) =  F_0(x)\cdot \eta = G_0(x)\cdot\eta = \eta\cdot G_1(x)\,.
$$
But $\locm$ is a partial group, hence the above equality implies that $F_1(x) = G_1(x)$. 
 Since this holds for all non-degenerate $x\in \loch_1$ as it also clearly holds for $x=1$, 
 Lemma~\ref{maps} implies that $F_1= G_1$. Thus, $F$ is determined by only $F_0$ and 
 $\eta= F(1,\iota)$. A similar argument implies that $F$ is also determined by $F_1$ and $\eta$.
\end{proof}

\begin{nota} Given two homomorphisms of partial groups, $f, g \colon \loch \to \locm$, 
the symbol 
$$
f \Left2{\eta} g
$$
stands for a homotopy from $g$ to $f$ which is determined by  $\eta \in \locm_1$, according
to Lemma~\ref{homot=conj}. Furthermore, since according to Lemma~\ref{homot=conj}(c) 
$\eta$ and $f$ determine $g$, we   write
$f= \9\eta g$. Likewise  $f^\eta=g$. 
\end{nota}

\begin{lmm}\label{ho.eq}

Let  $f,g \colon \loch\Right0{}\locm$ be two homomorphisms of partial groups and assume that 
$\eta \in \locm_1$ determines a homotopy from $f$ to $g$: $f \Left2{\eta} g$. 
Then, the following holds.
\begin{enumerate}[\rm (a)]

\item $\eta^{-1}$ determines a homotopy $f\Right1{\eta^{-1}}g$.

\item If $j\colon \lock\Right0{}\loch$ is  
homomorphism of partial groups, then  $\eta \in \locm_1$ determines a 
homotopy  $f\circ j \Left1{\eta} g\circ j$.

\item If $k\colon \locm\Right0{}\locl$ is a homomorphism of partial groups, 
then $k(\eta)$ determines a homotopy $k\circ f\Left1{k(\eta)}k\circ g$.

\item If $i,j\colon \lock\Right0{}\loch$ are homomorphisms of partial groups and 
$\nu\in \loch_1$ determines a homotopy $i\Left1{\nu}j$, then 
$\eta\cdot g(\nu)=f(\nu)\cdot\eta\in \locm_1$
determines a homotopy 
$f\circ i\LEFT6{\eta\cdot g(\nu)=\ }{\ = f(\nu)\cdot\eta}g\circ j$.
\end{enumerate}
%

\end{lmm}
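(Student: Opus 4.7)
The plan is to verify each part by checking the criterion provided by Lemma~\ref{homot=conj}(b): a $1$-simplex $\zeta\in\locm_1$ determines a homotopy between two given homomorphisms precisely when, for every $[x_1|\dots|x_n]$ in the source, the required interleaved $(n+1)$-simplices exist in $\locm$ and share a common product. The common thread across parts (a)--(d) is that these interleaved simplices will be produced as images (under $\nu$, an ambient homomorphism, or a face map) of simplices already supplied by the given homotopy or homotopies, so that existence and the common-product condition transfer automatically.

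Parts (b) and (c) are direct. For (b), the interleaved simplices for $f\circ j \Left2{\eta} g\circ j$ applied to $[x_1|\dots|x_n]\in\lock_n$ are precisely the $\7w_k$-simplices of the given homotopy $f \Left2{\eta} g$ evaluated at $[j(x_1)|\dots|j(x_n)]\in\loch_n$, so existence and the common product are inherited verbatim. For (c), applying the simplicial map $k$ componentwise to each $\7w_k$ produces $[k(f(x_1))|\dots|k(f(x_k))|k(\eta)|k(g(x_{k+1}))|\dots|k(g(x_n))]$; these constitute the interleaved family for $k\circ f \Left2{k(\eta)} k\circ g$, and their common product is the $k$-image of the original.

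For (a), the anti-involution is the crucial tool. Given $[x_1|\dots|x_n]\in\loch_n$, its reverse-inverse $[x_n^{-1}|\dots|x_1^{-1}]$ also lies in $\loch_n$, and the $(n-k)$-th interleaved simplex of $f \Left2{\eta} g$ evaluated on it is $[f(x_n^{-1})|\dots|f(x_{k+1}^{-1})|\eta|g(x_k^{-1})|\dots|g(x_1^{-1})]$. Applying $\nu$ reverses the order and inverts each entry, yielding $[g(x_1)|\dots|g(x_k)|\eta^{-1}|f(x_{k+1})|\dots|f(x_n)]$, which is exactly the interleaved simplex required for $\eta^{-1}$ in the reverse direction. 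Since $\Pi(\nu(\7u))=\Pi(\7u)^{-1}$, the common product is preserved up to inversion, verifying condition (ii).

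Part (d) is the main obstacle and is handled by stacking the two homotopies. For $[x_1|\dots|x_n]\in\lock_n$ and $0\le k\le n$, the homotopy $i \Left2{\nu} j$ produces $\7U_k=[i(x_1)|\dots|i(x_k)|\nu|j(x_{k+1})|\dots|j(x_n)]\in\loch_{n+1}$; the $k$-th interleaved simplex of $f \Left2{\eta} g$ evaluated at $\7U_k$ is then $\7V_k=[f(i(x_1))|\dots|f(i(x_k))|\eta|g(\nu)|g(j(x_{k+1}))|\dots|g(j(x_n))]\in\locm_{n+2}$. The face $d_{k+1}(\7V_k)$ collapses the adjacent entries $\eta$ and $g(\nu)$ into $\zeta=\eta\cdot g(\nu)=f(\nu)\cdot\eta$, producing exactly the interleaved $(n+1)$-simplex required for $f\circ i \Left2{\zeta} g\circ j$. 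Because $\Pi(d_i(\sigma))=\Pi(\sigma)$ whenever $1\le i<\dim\sigma$ (a direct consequence of $\Pi$ being an iterated $d_1$ together with the simplicial identities), the common product $\Pi(\7V_k)$ descends to $\Pi(d_{k+1}(\7V_k))$ and is independent of $k$. The delicate point is the index bookkeeping: one must select $\7U_k$ for each $k$ and then extract precisely the $(k+1)$-st face of $\7V_k$ so that the collapse of $\eta$ and $g(\nu)$ deposits $\zeta$ in the correct interleaved position between the $f\circ i$- and $g\circ j$-images.
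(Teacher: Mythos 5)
Your argument is correct and essentially coincides with the paper's: the paper proves only part (a) in detail, by exactly your device of applying Lemma~\ref{homot=conj}(b) to the inverted simplex $[x_n^{-1}|\dots|x_1^{-1}]$ and then inverting the resulting interleaved simplices, and it dispatches (b)--(d) with ``similar arguments based on Lemma~\ref{homot=conj}(b)'', which is precisely what you carry out. The only sentence worth adding is in (d): since your simplices $\7V_k$ arise from \emph{different} source simplices $\7U_k$, the equality of the $\Pi(\7V_k)$ is not literally condition (ii) applied to a single simplex, but it follows because $\Pi(\7V_k)=\eta\cdot g(\Pi(\7U_k))$ and the products $\Pi(\7U_k)$ all agree by condition (ii) for the homotopy $i\Left1{\nu}j$.
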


\begin{proof}
For each  
$\7h = [h_1|\ldots|h_n] \in \loch$, we have  $\7h^{-1} = [h_n^{-1}|\ldots|h_1^{-1}] \in \loch$. Since $\eta$ defines a homotopy from $g$ to $f$, conditions (i) and (ii) in 
Lemma~\ref{homot=conj} (b) applied 
to $\7h^{-1}$ provide  simplices
$$
\7v_k = [f(h_n^{-1})|\ldots |f(h_{n-k+1}^{-1})|\eta|g(h_{n-k}^{-1})|\ldots|g(h_1^{-1})] 
       \in \locm\,,\qquad k = 0, \ldots, n\,,
$$
with $\Pi(\7v_0) = \Pi(\7v_1) = \ldots = \Pi(\7v_n)$. Inverting again, we get
$$
\7u_{n-k} = \7v_k^{-1} = [g(h_1)|\ldots|g(h_{n-k})|\eta^{-1}|f(h_{n-k+1})|\ldots|f(h_n)] \in \locm\,,\qquad k = 0, \ldots, n\,,
$$
with $\Pi(\7u_0) = 
\ldots = \Pi(\7u_n)$. Hence, by Lemma~\ref{homot=conj}(b), $\eta^{-1}$ defines a homotopy from $f$ to $g$, and this proves (a). The proofs of  (b), (c), and (d)  follow by similar arguments based  on Lemma~\ref{homot=conj}(b)
\end{proof}

\begin{defi}\label{definorm}

The \emph{normalizer} $N(\locm)$ of a partial group  $\locm$ is the set of elements 
$\eta \in \locm_1$ that define a homotopy of the identity on $\locm$: 
$$
    N(\locm) = \conj{\eta \in \locm_1}{\exists\ \cj{\eta} \Left1{\eta}1_\locm}\,.
$$
The \textit{center} of $\locm$ is the subset $Z(\locm) \subseteq N(\locm)$ of elements $\eta$ defining a self-homotopy of the identity ($\cj{\eta} =1_\locm$).

\end{defi}

For any $\eta\in N(\locm)$, the map $\cj\eta$ can be seen as a standard conjugation map and it shares many properties with conjugation maps in honest groups. 
Indeed, $\cj\eta$ is a well defined automorphism, characterized by the formula $\cj\eta(x)=\eta\cdot x\cdot\eta^{-1}$, for each $x\in\locm_1$. 
This is shown in Lemma~\ref{N(M)conj} below. It turns out that $N(\locm)$ is a group and 
$\cj{}$ defines a group homomorphism $\cj{}\colon N(\locm)\Right0{}\Aut(\locm)$. By analogy with the case of 
groups, the automorphisms in the image of $\cj{}$ are called inner, and the cokernel is defined to be the 
outer automorphsims. Proposition \ref{Nloclgroup} below formalizes this idea. 

Notice that if $\locm=BG$ for some group $G$, then $N(BG)= G$. In general though, 
we can only guarantee that the unit belongs to $N(\locm)$.

\begin{lmm}\label{N(M)conj}
 Let $\locm$ be a partial group and let $\eta\in\locm_1$. Then, the following holds.
 
\begin{enumerate}[\rm (a)]
\item\label{N(M)conj1} $\eta\in N(\locm)$ if and only if 
for each simplex 
$[x_1|\dots |x_n]\in\locm$ there are also simplices 
$[ x_1|\dots|x_k|\eta| \eta^{-1}|x_{k+1}| \ldots |x_n]\in \locm$, for $k=0,\dots,n$.
\smallskip

\item\label{N(M)conj2} If $\eta\in N(\locm)$, then 
for each $x\in\locm_1$, we have $[\eta|x|\eta^{-1}]\in \locm$ and 
$$
    \cj\eta(x)  = \eta\cdot x\cdot \eta^{-1}\,.
$$

\item $[\eta_1| \ldots | \eta_n] \in \locm$ for any sequence 
$\eta_1, \ldots, \eta_n \in N(\locm)$.
\label{Normaux(a)}
\smallskip

\item If $\eta_1, \eta_2 \in N(\locm)$, then $\eta = \eta_1 \cdot \eta_2 \in N(\locm)$, 
and   $\cj{\eta} = \cj{\eta_1}\circ \cj{\eta_2}$.
\label{Normaux(b)}
\smallskip

\item If $\eta \in N(\locm)$ then $\eta^{-1} \in N(\locm)$, 
and $\cj{\eta^{-1}}= (\cj{\eta})^{-1}$.
\label{Normaux(c)}

\end{enumerate}  
 
\end{lmm}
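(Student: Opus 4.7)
The plan is to exploit the two homotopies attached to any $\eta \in N(\locm)$: the defining $F\colon \cj\eta \Left2{\eta} 1_\locm$, and (by Lemma~\ref{ho.eq}(a)) the reverse $G\colon 1_\locm \Left2{\eta^{-1}} \cj\eta$. Applying Lemma~\ref{homot=conj}(b) to $F$ at any simplex $[x_1|\dots|x_n]\in\locm$ produces the $(n+1)$-simplices $\7w_k^F = [\cj\eta(x_1)|\dots|\cj\eta(x_k)|\eta|x_{k+1}|\dots|x_n] \in \locm$; similarly, $G$ produces $\7u_k^G = [x_1|\dots|x_k|\eta^{-1}|\cj\eta(x_{k+1})|\dots|\cj\eta(x_n)] \in \locm$. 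I will prove the parts in the order~(b), then the automorphism property of $\cj\eta$, then~(a), and finally (c)--(e); the main obstacle will be establishing that $\cj\eta$ is an automorphism without circular reliance on~(a), which I handle by constructing its inverse directly from~(b).

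For part~(b), I note that $[x|\eta^{-1}] = \7u_1^G$ lies in $\locm$, and then applying Lemma~\ref{homot=conj}(b) to $F$ at the $2$-simplex $[x|\eta^{-1}]$ with $k=0$ yields $[\eta|x|\eta^{-1}] = \7w_0^F \in \locm$. The equality $\cj\eta(x) = \eta\cdot x \cdot \eta^{-1}$ follows from Lemma~\ref{homot=conj}(b)(ii): $\Pi[\eta|x|\eta^{-1}] = \Pi[\cj\eta(x)|\eta|\eta^{-1}] = \cj\eta(x)$, where the last equality uses associativity within the simplex (its face $d_2[\cj\eta(x)|\eta|\eta^{-1}] = [\cj\eta(x)|1]$ is degenerate). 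Specialising produces the identities $\cj\eta(\eta) = \eta$ and $\cj\eta(\eta^{-1}) = \eta^{-1}$ from the $3$-simplices $[\eta|\eta|\eta^{-1}]$ and $[\eta|\eta^{-1}|\eta^{-1}]$ (each supplied by applying $F$ to a $2$-simplex furnished by $G$). These fixity identities make the candidate inverse $y \mapsto \eta^{-1}\cdot y\cdot \eta$ well-defined on $\locm_1$ and, by an argument dual to~(b) with the roles of $(F,G)$ interchanged, it extends to a simplicial two-sided inverse of $\cj\eta$; hence $\cj\eta$ is an automorphism of $\locm$, whose inverse I denote $(\cj\eta)^{-1}$ below.

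For the forward direction of~(a), I apply Lemma~\ref{homot=conj}(b) to $G$ at the $(n+1)$-simplex $\7w_k^F$ with index~$k+1$: the resulting $(n+2)$-simplex is $[\cj\eta(x_1)|\dots|\cj\eta(x_k)|\eta|\eta^{-1}|\cj\eta(x_{k+1})|\dots|\cj\eta(x_n)] \in \locm$. Applying $(\cj\eta)^{-1}$ strips off the outer $\cj\eta$'s and yields the desired $[x_1|\dots|x_k|\eta|\eta^{-1}|x_{k+1}|\dots|x_n] \in \locm$. For the converse direction, the hypothesised simplices, together with face operations and the inversion, supply enough data to verify the conditions of Lemma~\ref{homot=conj}(b) for the existence of the homotopy $\cj\eta \Left2{\eta} 1_\locm$. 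Parts~(c)--(e) then follow as corollaries: (c) by induction on $n$, applying Lemma~\ref{homot=conj}(b) to $\cj{\eta_1}\Left2{\eta_1} 1_\locm$ at $[\eta_2|\dots|\eta_n]\in\locm$ (present by induction) with $k=0$; (d) by Lemma~\ref{ho.eq}(d) applied to the pair of homotopies for $\eta_1$ and $\eta_2$, yielding $\cj{\eta_1}\circ \cj{\eta_2} \Left2{\eta_1\cdot \eta_2} 1_\locm$; and (e) by Lemma~\ref{ho.eq}(c) applied to $G$ with post-composition by the automorphism $(\cj\eta)^{-1}$, producing $(\cj\eta)^{-1} \Left2{\eta^{-1}} 1_\locm$ once one notes that $(\cj\eta)^{-1}(\eta^{-1}) = \eta^{-1}$.
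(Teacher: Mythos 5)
Your forward arguments are sound and follow essentially the paper's strategy: everything is driven by playing the homotopy $\cj\eta \Left2{\eta} 1_\locm$ against its reverse $1_\locm \Left2{\eta^{-1}} \cj\eta$ through Lemma~\ref{homot=conj}(b). Your derivation of $[\eta|x|\eta^{-1}]$ and of $\cj\eta(x)=\eta\cdot x\cdot\eta^{-1}$ in (b), the insertion of $\eta|\eta^{-1}$ for the forward half of (a), and the deductions of (c) from Lemma~\ref{homot=conj}(b), of (d) from Lemma~\ref{ho.eq}(d), and of (e) from Lemma~\ref{ho.eq}(c) are all correct (the paper gets (e) by \emph{pre}-composing with $(\cj\eta)^{-1}$ via Lemma~\ref{ho.eq}(b), which sidesteps the identity $(\cj\eta)^{-1}(\eta^{-1})=\eta^{-1}$ that your post-composition route needs, but both work).

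The genuine gap is the converse implication in (a), which you dispatch in one sentence: the hypothesised simplices ``supply enough data to verify the conditions of Lemma~\ref{homot=conj}(b).'' This is the direction carrying the real content, and it is not a verification, because there is nothing yet to verify against: in this direction the map $\cj\eta$ is not given; it must be \emph{constructed} from the hypothesised simplices, shown to be a simplicial endomorphism of all of $\locm$ (not merely a map on $\locm_1$), and then supplied with witnessing simplices $[\cj\eta(x_1)|\dots|\cj\eta(x_k)|\eta|x_{k+1}|\dots|x_n]$ having constant $\Pi$. Note that the hypothesis hands you $[\eta|\eta^{-1}|x]$ and $[x|\eta|\eta^{-1}]$ but not $[\eta|x|\eta^{-1}]$, so even defining the candidate value $\eta\cdot x\cdot\eta^{-1}$ takes work. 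The paper does this by recursively inserting $\eta|\eta^{-1}$ pairs to build simplices $\7v_k=[\eta|\eta^{-1}|x_1|\eta|\eta^{-1}|x_2|\dots]$, reading off $[\eta^{-1}|x|\eta]$ as faces, and collapsing with the product operator to get $\7w_k=[\eta^{-1}\cdot x_1\cdot\eta|\dots|\eta^{-1}\cdot x_k\cdot\eta|\eta^{-1}|x_{k+1}|\dots]$ with equal products; these witness $\eta^{-1}\in N(\locm)$, and (e) then gives $\eta\in N(\locm)$. The same insertion-and-collapse construction is also what is missing from your ``dual argument'' that $y\mapsto\eta^{-1}\cdot y\cdot\eta$ extends to a simplicial two-sided inverse of $\cj\eta$: one must exhibit, for every simplex $[y_1|\dots|y_n]\in\locm$, the simplex $[\eta^{-1}\cdot y_1\cdot\eta|\dots|\eta^{-1}\cdot y_n\cdot\eta]$, and this does not follow by formally dualising the $n=1$ case.
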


\begin{proof}
Given $\cj{\eta}\Left1{\eta} 1_\locm$, 
we also have $1_\locm \Left1{\eta^{-1}}  \cj{\eta}$ according to 
Lemma~\ref{ho.eq}(a). According to Lemma~\ref{ho.eq}(b), composing with the inverse map 
of $\cj{\eta}$, 
we get 
\begin{equation}\label{inv-homot}
 (\cj{\eta})^{-1}  \Left2{\eta^{-1}}   1_\locm\,,
\end{equation}
thus $\eta^{-1}\in N(\locm)$ and $(\cj{\eta})^{-1} =\cj{\eta^{-1}}$. This proves \eqref{Normaux(c)}.

Assume that $\eta\in N(\locm)$. According to Lemma~\ref{homot=conj}(b), the existence of the 
homotopy \eqref{inv-homot} implies the existence of simplices
$\7z_k = [\cj{\eta}^{-1}( x_1)|\dots|\cj{\eta}^{-1}( x_k)|\eta^{-1}|x_{k+1}| \ldots |x_n]\in \locm$, for $k=0,\dots,n$. Applying Lemma~\ref{homot=conj}(b) again to the homotopy  $\cj{\eta} \Left1{\eta}   1_\locm$ and the simplices $\7z_k$, we obtain simplices 
$$
    \7y_k=  [ x_1|\dots|x_k|\eta| \eta^{-1}|x_{k+1}| \ldots |x_n]\in \locm
$$
for every $k=0,\dots,n$.

Conversely, let $\eta \in \locm_1$. For each simplex $\7x = [x_1|\dots |x_n]\in \locm$, assume also that the simplices $\7y_k$ described above exist for all $k$. Applying this property recursively, we get new simplices 
$
\7v_k =   [\eta|\eta^{-1}|x_1|\eta|\eta^{-1}|x_2|  \dots  |\eta|\eta^{-1}  |x_k|\eta|\eta^{-1}| x_{k+1}|x_{k+2}\dots ]
$,
for $k=0,\dots, n$, and they clearly satisfy $\Pi(\7v_0) =\dots =  \Pi(\7v_n) =\Pi(\7x)$. Applying the product operator appropriately we obtain simplices
$$
\7w_k =   [\eta^{-1}\cdot x_1\cdot \eta |  \dots 
                |\eta^{-1}\cdot x_k\cdot \eta|\eta^{-1}| x_{k+1}|x_{k+2}\dots ]\,,
$$
for $k=0,\dots, n$, such that $\Pi(\7w_0) =\dots =  \Pi(\7w_n)$. By Lemma \ref{homot=conj}, this proves that $\eta^{-1} \in N(\locm)$, and by part (e) it now follows that $\eta \in N(\locm)$. This finishes the proof of \eqref{N(M)conj1}.

Notice that the above also proves part \eqref{N(M)conj2}. Indeed, the simplices $[\eta^{-1}|x|\eta]$ appear as faces of $\7v_k$, and the simplices $\7w_k$ show that we can write 
$\cj{\eta^{-1}}(x) = \eta^{-1} \cdot x \cdot \eta$, by Lemma~\ref{homot=conj}(c). Note that we can also switch the roles of $\eta^{-1}$ and $\eta$ in order to obtain the identity $\cj\eta(x) = \eta\cdot x \cdot \eta^{-1}$. Finally, \eqref{Normaux(a)} and \eqref{Normaux(b)} follow immediately from \eqref{N(M)conj1} and 
\eqref{N(M)conj2}. 
\end{proof}

\begin{nota}\label{not-conj}
Given a partial group $\locm$ and $\eta\in N(\locm)$, we shall write
$$
\cj\eta(x) = \9\eta x=\eta\cdot x \cdot \eta^{-1} \qquad \mbox{and} \qquad (\cj{\eta})^{-1}(x) = \cj{\eta^{-1}}(x) = x^\eta =\eta^{-1}\cdot x \cdot \eta.
$$
\end{nota}

 \begin{prop}\label{Nloclgroup}

Let $\locm$ be  a partial group. Then, $N(\locm)$ is a subgroup of $\locm$, $Z(\locm)$ is an abelian subgroup of $N(\locm)$, and there 
is an exact sequence
\begin{equation}\label{eqexactseq1}
1 \to Z(\locm) \Right3{} N(\locm) \Right3{\cj{}} \Aut(\locm) \Right3{} \Out(\locm) \to 1,
\end{equation}
where $\cj{}\colon N(\locm) \to \Aut(\locm)$ is the map that sends $\eta \in N(\locm)$ 
to $\cj{\eta} \in \Aut(\locm)$ and $\Out(\locm)$ is the set of homotopy classes of 
automorphisms of $\locm$. In particular, $N(\locm)$ and $Z(\locm)$ are invariant 
by automorphisms of $\locm$, and $\Out(\locm)$ is a group.

\end{prop}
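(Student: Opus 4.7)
The bulk of the work is already in Lemmas~\ref{ho.eq} and \ref{N(M)conj}; what remains is organisation plus one key computation conjugating elements of $N(\locm)$ by automorphisms. First, $N(\locm)$ is a subgroup of $\locm$: the unit $1\in\locm_1$ lies in $N(\locm)$ via the trivial homotopy $1_{\locm}\Left1{1}1_{\locm}$, and Lemma~\ref{N(M)conj}(c)--(e) gives closure under products and inverses together with the multiplicativity relations $\cj{\eta_1\cdot\eta_2} = \cj{\eta_1}\circ\cj{\eta_2}$ and $\cj{\eta^{-1}} = (\cj{\eta})^{-1}$. Hence $\cj{}\colon N(\locm)\to\Aut(\locm)$ is a group homomorphism with image $\Inn(\locm)$ and kernel $Z(\locm)$ by the very definition of $Z(\locm)$. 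Being a kernel, $Z(\locm)$ is automatically a normal subgroup; it is abelian because for $\eta_1,\eta_2\in Z(\locm)$ the simplex $[\eta_1|\eta_2|\eta_1^{-1}]$ exists in $\locm$ by \ref{N(M)conj}(c), and \ref{N(M)conj}(b) gives $\eta_2 = \cj{\eta_1}(\eta_2) = \eta_1\eta_2\eta_1^{-1}$.

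The crux is the conjugation formula: for every $\phi\in\Aut(\locm)$ and every $\eta\in N(\locm)$,
\begin{equation*}
\phi(\eta)\in N(\locm)\qquad\text{and}\qquad \cj{\phi(\eta)} \ =\ \phi\circ\cj{\eta}\circ\phi^{-1}.
\end{equation*}
I would prove this by starting from $\cj{\eta}\Left1{\eta}1_{\locm}$ and applying Lemma~\ref{ho.eq}(d) twice: first, composing on the left with the trivial homotopy $\phi\Left1{1}\phi$ yields $\phi\circ\cj{\eta}\Left1{\phi(\eta)}\phi$; then composing on the right with $\phi^{-1}\Left1{1}\phi^{-1}$ yields $\phi\circ\cj{\eta}\circ\phi^{-1}\Left1{\phi(\eta)}1_{\locm}$. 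This simultaneously exhibits $\phi(\eta)\in N(\locm)$ and identifies $\cj{\phi(\eta)} = \phi\circ\cj{\eta}\circ\phi^{-1}$. As immediate consequences, $N(\locm)$ is invariant under $\Aut(\locm)$ and $\Inn(\locm)$ is normal in $\Aut(\locm)$; invariance of $Z(\locm)$ follows because $\cj{\phi(\eta)} = \phi\circ 1_{\locm}\circ\phi^{-1} = 1_{\locm}$ whenever $\eta\in\ker(\cj{})$.

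Finally, I would identify $\Out(\locm)$ (defined as homotopy classes of automorphisms) with $\Aut(\locm)/\Inn(\locm)$. Given $\phi\Left1{\eta}\psi$ with $\phi,\psi\in\Aut(\locm)$, a single application of Lemma~\ref{ho.eq}(d) with $\psi^{-1}\Left1{1}\psi^{-1}$ gives $\phi\circ\psi^{-1}\Left1{\eta}1_{\locm}$, so $\eta\in N(\locm)$ and $\phi = \cj{\eta}\circ\psi$; conversely, given $\phi = \cj{\eta}\circ\psi$ with $\eta\in N(\locm)$, Lemma~\ref{ho.eq}(b) applied to $\cj{\eta}\Left1{\eta}1_{\locm}$ produces $\phi\Left1{\eta}\psi$. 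Thus the two equivalence relations coincide, and combined with the normality of $\Inn(\locm)$ this makes $\Out(\locm)$ a group. The exact sequence~\eqref{eqexactseq1} then assembles tautologically from the inclusion $Z(\locm)\hookrightarrow N(\locm)$, the homomorphism $\cj{}$ (with image $\Inn(\locm)$), and the quotient projection $\Aut(\locm)\twoheadrightarrow\Out(\locm)$. The only genuinely nontrivial step is the double application of Lemma~\ref{ho.eq}(d) for the conjugation formula; the rest is formal.
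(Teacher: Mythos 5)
Your proof is correct and follows essentially the same route as the paper's: both rest on Lemma~\ref{N(M)conj} for the group structure of $N(\locm)$ and the homomorphism property of $\cj{}$, on Lemma~\ref{ho.eq} to identify homotopies between automorphisms with elements of $N(\locm)$ (hence homotopy classes with cosets of $\Inn(\locm)$), and on the conjugation formula $\phi\circ\cj{\eta}=\cj{\phi(\eta)}\circ\phi$ for normality and the characteristic property of $N(\locm)$ and $Z(\locm)$. Your derivation of that formula via a double application of Lemma~\ref{ho.eq}(d), and your explicit argument for the commutativity of $Z(\locm)$, are just more detailed versions of steps the paper asserts tersely.
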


\begin{proof}

Notice first that $N(\locm)$ is a group by Lemma~\ref{N(M)conj}(\ref{Normaux(a)},\ref{Normaux(b)},\ref{Normaux(c)}), as well as a subgroup of $\locm$. The assignment $\eta \mapsto \cj{\eta}$ defines a homomorphism 
$N(\locm) \to \Aut(\locm)$ by Lemma~\ref{N(M)conj}\eqref{Normaux(b)}.  By definition, $Z(\locm)$ is the kernel of this homomorphism, and it is clearly abelian.     

We can now show that the homotopy relation between automorphisms of $\Aut(\locm)$ is really an equivalence relation. 
According to Lemma~\ref{homot=conj}(a) a homotopy from $f$ to $g$
is determined by an element $\eta\in\locm_1$, $g\Left1{\eta}f$. 
By Lemma \ref{ho.eq} (b), we also have $g\circ f^{-1} \Left1{\eta}f\circ f^{-1} = \Id_\locm$, 
hence $\eta\in N(\locm)$ by definition. 
Moreover, $g\circ f^{-1}$ is determined by $\eta$ by Lemma~\ref{homot=conj}(c) and we write  
$g\circ f^{-1} = \cj\eta$ according to Definition~\ref{definorm}. 
Now it is clear that $\sim$ is an equivalence relation, by Lemma~\ref{N(M)conj}(\ref{Normaux(a)},\ref{Normaux(b)},\ref{Normaux(c)}).

Notice also that $f\circ\cj{\eta} = \cj{f(\eta)}\circ f$, hence the natural projection 
$\Aut(\locm)\Right0{}\Out(\locm)$ is a group epimorphism with kernel $\cj{}(N(\locm))$. 
This proves that the sequence~(\ref{eqexactseq1}) is exact. 
%
%
%
%
%
%
%
%
%
\end{proof}

The above result also shows that every automorphism of $\locm$ preserves the subgroup $N(\locm)$, and in this sense we say that $N(\locm)$ is a \emph{characteristic} subgroup of $\locm$. However, this does not imply that $N(\locm)$ is \emph{normal} in $\locm$.

\begin{lmm}\label{he=iso}
 A homotopy equivalence $f\colon\locm\Right0{}\locl$  of partials groups is an isomorphism.
\end{lmm}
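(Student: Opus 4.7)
The plan is to exploit Lemma~\ref{homot=conj} and Proposition~\ref{Nloclgroup}: a homotopy from the identity $\Id_\locm$ to another endomorphism of $\locm$ is, by Definition~\ref{definorm}, witnessed by an element of $N(\locm)$ that realises that endomorphism as an honest conjugation automorphism of $\locm$. Applied to $g\circ f$ and $f\circ g$ for a chosen homotopy inverse $g$ of $f$, this will turn both compositions into genuine automorphisms, which in turn forces $f$ itself to be a simplicial bijection.

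More concretely, I would pick a homotopy inverse $g\colon\locl\to\locm$ to $f$, together with homotopies $g\circ f\simeq\Id_\locm$ and $f\circ g\simeq\Id_\locl$. Using Lemma~\ref{ho.eq}(a) to reverse direction if necessary, we may assume these are of the form $g\circ f\Left2{\eta}\Id_\locm$ and $f\circ g\Left2{\xi}\Id_\locl$, i.e.\ homotopies \emph{from} the identities \emph{to} the compositions. By Lemma~\ref{homot=conj}(a) each is uniquely determined by a $1$-simplex, namely $\eta\in\locm_1$ and $\xi\in\locl_1$, and the existence of such homotopies is precisely what Definition~\ref{definorm} demands: $\eta\in N(\locm)$ with $\cj{\eta}=g\circ f$, and $\xi\in N(\locl)$ with $\cj{\xi}=f\circ g$.

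Proposition~\ref{Nloclgroup} then places $g\circ f=\cj{\eta}$ in $\Aut(\locm)$ and $f\circ g=\cj{\xi}$ in $\Aut(\locl)$, so both compositions are bijections on every set of simplices. Hence $f_n\colon\locm_n\to\locl_n$ is injective for every $n$ (because $g\circ f$ is) and surjective for every $n$ (because $f\circ g$ is), so $f$ is a bijective simplicial map. Its componentwise set-theoretic inverse commutes with the face and degeneracy operators of $\locm$ and $\locl$ (a one-line check, moving $f$ across $d_i$ and $s_i$ using that $f$ is simplicial), hence is itself a simplicial map, i.e.\ a homomorphism of partial groups. Therefore $f$ is an isomorphism of partial groups.

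The only real subtlety is making sure that each homotopy is oriented so that Definition~\ref{definorm} applies on the nose; once the elements $\eta$ and $\xi$ have been recognised as belonging to the respective normalizers, the normalizer-automorphism machinery of Proposition~\ref{Nloclgroup} does all the remaining work.
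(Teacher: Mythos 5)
Your proposal is correct and follows essentially the same route as the paper: take a homotopy inverse $g$, observe via Definition~\ref{definorm} that the homotopies to the identities exhibit $g\circ f$ and $f\circ g$ as conjugation automorphisms $\cj{\eta}$ and $\cj{\xi}$, and conclude that $f$ is bijective from the two-sided invertibility. The paper's proof is just a more compressed version of the same argument, citing Lemma~\ref{N(M)conj} for the fact that the conjugation maps are isomorphisms.
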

\begin{proof}
 Let $g\colon \locl\Right0{}\locm$ be the homotopy inverse of $f$. Then $f\circ g$ is homotopic to the 
 identiy on $\locl$, hence there is $\eta\in N(\locl)$ such that $g\circ f= \cj{\eta}$. This last is an isomorphism
by Lemma~\ref{N(M)conj}(\ref{Normaux(c)}), hence $g$ is injective and $f$ is surjective. The same argument 
applied to $g\circ f$ shows that both $f$ and $g$ are isomorphisms.
\end{proof}


\section{The space of self-equivalences of a partial group}\label{Sself}

In this section we analyze in detail the structure of the simplicial group of automorphisms of a partial group.
By Lemma~\ref{he=iso},  we identify the space of self-equivalences of 
$\locm$ with the simplicial group of automorphisms.  
We show that this complex can be completely described in terms of the algebraic structure of the partial group. We start with a quick review on the definition of the automorphism complex of a simplicial set, which we then specialize to partial groups.

Given simplicial sets $X$ and $Y$, the \emph{function complex} $\Map(X,Y)$ is defined  as the simplicial set with $n$-simplices
$$
\Map(X,Y)_n = \{\mbox{simplicial maps } f \colon X \times \Delta[n] \to Y\}.
$$
and with face maps and degeneracies induced by face maps and degeneracies  the standard simplices $\Delta[n]$.
 (cf.\ \cite[I.5]{GJ}, \cite[6.3]{Curtis}). 

 In particular, when $X = Y$, the function complex $\Map(X,X)$ is naturally a simplicial monoid. 
 Each $n$-simplex $f\colon X\times \Delta[n]\Right0{} X$ determines and  is determined by the map
 $\widetilde{f} = (f, pr_2) \colon X \times \Delta[n] \to X \times \Delta[n]$.
 Then,  the product of two
 $n$-simplices $f$ and $g$ is defined as the composition 
 $\widetilde{f}\circ g$:
$$
\xymatrix{
X \times \Delta[n] \ar[r]^-{\widetilde{f}} \ar@/_3ex/[rr]_{fg} &  
          X \times \Delta[n] \ar[r]^-{{g}}  & X \,.
}
$$
 This makes $\Map(X,X)_n$ into a monoid for each $n \geq 0$, and the operation is easily checked to commute with face and degeneracy operators, so that $\Map(X,X)$ becomes a simplicial monoid. 

\begin{defi}

Let $X$ be a simplicial set. The \emph{automorphism complex of $X$} is the subcomplex $\aut(X) \subseteq \Map(X,X)$ of invertible elements (in every dimension). It is  a simplicial group by definition.

\end{defi}

Let $\locm$ be a partial group. In this case $\aut(\locm)$ coincides with the union of the connected components of 
$\Map(\locm,\locm)$ containing the self-equivalences. Indeed, the $0$-simplices are the  invertible simplicial maps
$$
\locm \cong \locm \times \Delta[0] \Right3{} \locm\,,
$$
$\aut_0(\locm) =\Aut(\locm)$. Moreover, for $n \geq 1$, each simplicial map
$$
\locm \times \Delta[n] \Right3{} \locm
$$
is determined by the restriction to the 1-skeleton of $\locm\times \Delta[n]$, as pointed out in Lemma \ref{maps}, and this fact allows a simplified description of $\aut(\locm)$.

Let $\calaut(\locm)$ be the category with object set the group $\Aut(\locm)$ and with morphism sets
$$
\Mor_{\calaut(\locm)}(\alpha, \beta) = \conj{ \eta \in N(\locm) }{\alpha\Left2{\eta} \beta}\,.
$$
That is, $\Mor_{\calaut(\locm)}(\alpha, \beta)$ is the set of elements of $N(\locm)$ that define a homotopy from $\beta$ to $\alpha$, namely $(\alpha \Left2{\eta} \beta)$. This category is strict monoidal with product $\otimes$ defined on objects as composition of automorphisms, 
$\alpha\otimes \beta= \alpha\circ\beta$, and on morphisms as
\begin{equation}\label{tensor}
\begin{split}
(\alpha\Left2{\eta} \beta)\otimes
(\alpha'\Left2{\eta'} \beta') &= (\alpha\otimes\alpha' \Left3{\ \eta\otimes\eta'} \beta\otimes\beta')\,, \\
\eta\otimes\eta' \defin \eta\cdot \beta(\eta') & = \alpha(\eta')\cdot \eta \in N(\locm)
\end{split}
\end{equation}
The rightmost equality above follows immediately from Lemma \ref{homot=conj} (b), as $\eta$ defines a homotopy from $\beta$ to $\alpha$.

The nerve of $\calaut(\locm)$ is therefore a simplicial group and it acts naturally on $\locm$ via the following formula
$$
(\alpha_0\Left1{\eta_1} \alpha_1 \Left1{\eta_2}\dots \Left1{\eta_n}\alpha_n) \cdot [x_1|\dots | x_n] 
= [\eta_1\alpha_1(x_1)|\dots | \eta_n\alpha_n(x_n)] \,,
$$
where the latter is a well defined simplex of $\locm$  by Lemma~\ref{homot=conj}, 
since $\eta_i\in N(\locm)$ for all $i=1,\dots,n$.

Let us illustrate with the case $n = 1$ how this is a well defined action. Fix $[x] \in \locm$, and notice that $(\Id \Left2{1} \Id)$ acts trivially on $[x]$. Let now $(\alpha_0 \Left2{\eta} \alpha_1), (\beta_0 \Left2{\gamma} \beta_1) \in \Mor(\calaut(\locm))$. On the one hand, we have
\begin{align*}
(\beta_0 \Left2{\gamma} \beta_1)[x] & = [\gamma \cdot \beta_1(x)] \\
(\alpha_0 \Left2{\eta} \alpha_1)[\gamma \cdot \beta_1(x)] & 
= [\eta \cdot \alpha_1(\gamma) \cdot (\alpha_1 \circ \beta_1)(x)]
\end{align*}
while, on the other hand, equation \eqref{tensor} above implies that
\begin{align*}
(\alpha_0 \Left2{\eta} \alpha_1) & \otimes (\beta_0 \Left2{\gamma} \beta_1)[x] = \\
 & = (\alpha_0 \circ \beta_0 \Left6{\eta \cdot \alpha_1(\gamma)} \alpha_1 \circ \beta_1)[x] = [\eta \cdot \alpha_1(\gamma) \cdot (\alpha_1 \circ \beta_1)(x)].
\end{align*}
The general case is similarly checked.

\begin{thm}\label{isoaut}

The adjoint map of the natural action  $\theta \colon |\calaut(\locm)| \times \locm \to \locm$ is 
an isomorphism of simplicial groups  
$$\omega\colon \nv{\calaut(\locm)} \Right2{\cong} \aut(\locm)\,.$$ 
%
%
\end{thm}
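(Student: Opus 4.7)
The plan is to construct an explicit inverse to $\omega$ in each simplicial degree, and then check that $\omega$ respects composition. Throughout, the main tool is Lemma~\ref{maps}, which reduces any simplicial map out of $\locm\times\Delta[n]$ to its restriction on the $1$-skeleton.

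First I would verify that the action $\theta$ is a well-defined simplicial map whose adjoint $\omega$ takes values in $\aut(\locm)$. The required simplex $[\eta_1\alpha_1(x_1)|\cdots|\eta_n\alpha_n(x_n)]$ exists in $\locm$ by an iterated application of Lemma~\ref{homot=conj}(b) along the chain $\alpha_0\Left1{\eta_1}\alpha_1\Left1{\eta_2}\cdots\Left1{\eta_n}\alpha_n$, and simpliciality follows from the identity $\alpha_i(x)\cdot\eta_{i+1}=\eta_{i+1}\cdot\alpha_{i+1}(x)$ of Lemma~\ref{homot=conj}(a), which rewrites $\eta_i\alpha_i(x_i)\cdot\eta_{i+1}\alpha_{i+1}(x_{i+1})=(\eta_i\cdot\eta_{i+1})\alpha_{i+1}(x_i\cdot x_{i+1})$ and so matches the action of the composite morphism $\eta_i\cdot\eta_{i+1}$ in $\calaut(\locm)$. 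Since each $\alpha_i\in\Aut(\locm)$ is invertible, $\omega_n(\chi)\in\aut_n(\locm)$.

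Second, I would build $\omega_n^{-1}$: given $F\in\aut_n(\locm)$, set $\alpha_i=F\circ(\Id_\locm\times v_i)\in\Aut(\locm)$ and $\eta_i=F(1,\iota_{i-1,i})$, where $\iota_{i-1,i}$ is the spine edge of $\Delta[n]$ from $v_{i-1}$ to $v_i$. Restricting $F$ to $\locm\times\Delta[\{v_{i-1},v_i\}]\cong\locm\times\Delta[1]$ gives a homotopy $\alpha_{i-1}\Left1{\eta_i}\alpha_i$; precomposing with $\alpha_i^{-1}$ via Lemma~\ref{ho.eq}(b) produces a homotopy $\alpha_{i-1}\alpha_i^{-1}\Left1{\eta_i}1_\locm$, so $\eta_i\in N(\locm)$ by Definition~\ref{definorm}. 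The resulting chain $(\alpha_0\Left1{\eta_1}\cdots\Left1{\eta_n}\alpha_n)$ is then an $n$-simplex of $N(\calaut(\locm))$.

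To see that these assignments are mutually inverse, one composite is immediate from the definition of $\theta$. For the other, given $F$, let $G=\omega$ of its extracted chain and use Lemma~\ref{maps} to reduce $F=G$ to checking equality on $1$-simplices of $\locm\times\Delta[n]$. Degenerate edges of $\Delta[n]$ return $\alpha_i$; for a non-spine edge $\iota_{i,j}$ with $j-i\geq 2$, consider the $2$-simplex $\tau$ of $\Delta[n]$ with vertex set $\{v_i,v_k,v_j\}$ for any intermediate $k$: the spine of $(1,\tau)$ is $((1,\iota_{i,k}),(1,\iota_{k,j}))$, so (PM2) forces $F(1,\tau)=[F(1,\iota_{i,k})|F(1,\iota_{k,j})]$, and $d_1$ gives $F(1,\iota_{i,j})=F(1,\iota_{i,k})\cdot F(1,\iota_{k,j})$; induction on $j-i$ then yields $F(1,\iota_{i,j})=\eta_{i+1}\cdots\eta_j$. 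An analogous $2$-simplex computation (compare Figure~\ref{pic1}) recovers $F(x,\iota_{i,j})=(\eta_{i+1}\cdots\eta_j)\cdot\alpha_j(x)=G(x,\iota_{i,j})$ for arbitrary $x\in\locm_1$, completing the verification. Finally, the homomorphism property of $\omega$ matches composition in $\aut_n(\locm)\subseteq\Map(\locm,\locm)_n$ with the monoidal product $\otimes$ on $N(\calaut(\locm))$ from \eqref{tensor}, via the identity $\eta\otimes\eta'=\eta\cdot\beta(\eta')$ which emerges when iterating the action formula. The main obstacle is the spine reconstruction of $F$ in the third step, but once (PM2) and the $2$-simplex computation of Lemma~\ref{homot=conj}(a) are in hand the remainder is bookkeeping.
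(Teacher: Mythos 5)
Your proposal is correct and follows the same route as the paper: the paper's (two-sentence) proof rests exactly on the fact that maps out of $\locm\times\Delta[n]$ are determined by their restriction to the $1$-skeleton, which is the engine of your explicit inverse construction. You have simply supplied the details the paper omits — the extraction of the chain $(\alpha_0\Left1{\eta_1}\cdots\Left1{\eta_n}\alpha_n)$ from $F\in\aut_n(\locm)$, the verification via Lemma~\ref{ho.eq}(b) that $\eta_i\in N(\locm)$, and the spine reconstruction of $F(x,\iota_{i,j})$ from (PM2) — all of which check out.
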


\begin{proof}

It follows because elements of $\aut(\locm)$ are determined by the restriction to the $1$-skeleton, and therefore they are in the image of $\omega$. As $\omega$ is also injective, this finishes the proof.
\end{proof}

\begin{cor}\label{isoaut2}

There is a fibration $B^2Z(\locm) \to B\aut(\locm) \to B\Out(\locm)$.

\end{cor}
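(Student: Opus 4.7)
The plan is to deduce the fibration directly from Theorem~\ref{isoaut} together with the homotopy group computation \eqref{aut}, by exhibiting a short exact sequence of simplicial groups and applying the classifying space ($\bar{W}$) construction.

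First, I would produce a morphism of simplicial groups $q\colon \aut(\locm)\Right0{} \Out(\locm)$, where the target is regarded as a constant simplicial group. Under the identification $\aut(\locm)\cong|\calaut(\locm)|$ of Theorem~\ref{isoaut}, this map is induced by the functor $\calaut(\locm)\to \Out(\locm)$ that sends each object $\alpha\in\Aut(\locm)$ to its class $[\alpha]$ and collapses all morphisms to identities; this functor is well-defined because morphisms of $\calaut(\locm)$ are given by elements of $N(\locm)$, which produce homotopies and therefore connect automorphisms lying in the same coset of $\cj{}(N(\locm))\trianglelefteq\Aut(\locm)$. The map $q$ is surjective and realizes $\Out(\locm)=\pi_0(\aut(\locm))$.

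Second, let $K=\ker(q)$, a simplicial subgroup of $\aut(\locm)$. By construction $K$ is the identity component, so $\pi_0(K)=0$, and by \eqref{aut} (the higher homotopy groups of $K$ coincide with those of $\aut(\locm)$) we have $\pi_1(K)=Z(\locm)$ and $\pi_i(K)=0$ for $i\geq 2$. Since $Z(\locm)$ is abelian by Proposition~\ref{Nloclgroup}, $K$ is weakly equivalent to the Eilenberg--Mac Lane simplicial group $K(Z(\locm),1)=BZ(\locm)$.

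Finally, applying $\bar{W}$ to the short exact sequence of simplicial groups
$$
1\Right2{} K \Right2{} \aut(\locm) \Right2{q} \Out(\locm) \Right2{} 1
$$
yields a principal fibration $\bar{W}K\Right0{} \bar{W}\aut(\locm)\Right0{} \bar{W}\Out(\locm)$, and the identification $\bar{W}K\simeq\bar{W}(BZ(\locm))=B^2Z(\locm)$ gives the desired fibration $B^2Z(\locm)\Right0{} B\aut(\locm)\Right0{} B\Out(\locm)$. The main subtle point is the identification of $K$ with $BZ(\locm)$, which reduces to \eqref{aut} and the abelianness of $Z(\locm)$; passing from a short exact sequence of simplicial groups to a fibration of classifying spaces is a standard fact (cf.\ \cite[V.6]{GJ}).
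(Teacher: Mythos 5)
Your argument is correct and follows essentially the same route as the paper: both rest on Theorem~\ref{isoaut} and the explicit description of $\calaut(\locm)$, identifying $\pi_0$ with $\Out(\locm)$ and the identity component with $BZ(\locm)$; you merely make explicit, via the levelwise short exact sequence of simplicial groups and the $\bar{W}$ construction, the step that the paper's proof compresses into ``the statement follows easily''. One small caveat: you should not quote \eqref{aut} for the homotopy groups of $K$, since within the paper's logic \eqref{aut} is itself established by exactly this analysis; instead observe directly that $\calaut(\locm)$ is a groupoid (Lemma~\ref{ho.eq}(a)) in which $\Mor_{\calaut(\locm)}(\alpha,\alpha)\cong Z(\locm)$ for every object $\alpha$ (Lemma~\ref{homot=conj} and Definition~\ref{definorm}), so that each component of its nerve is a $BZ(\locm)$ and the kernel $K$, being the identity component, has $\pi_1(K)=Z(\locm)$ and no higher homotopy.
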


\begin{proof}

By Theorem \ref{isoaut}, $\aut(\locm) \cong |\calaut(\locm)|$. The description of $\calaut(\locm)$ easily yields that $\pi_0(\aut(\locm)) \cong \Out(\locm)$ and each connected component of $\aut(\locm)$ is equivalent to $BZ(\locm)$. The statement follows easily.
\end{proof}

\begin{defi}\label{defiaction}

Let $\loch$ and $\locm$ be partial groups.
\begin{itemize}

\item An \emph{action} of $\loch$ on $\locm$ is a homomorphism of partial groups $\loch \to B\Aut(\locm)$.

\item A \emph{proxy action} of $\loch$ on $\locm$ is a simplicial map $\loch \to B\aut(\locm)$.

\item An \emph{outer action} of $\loch$ on $\locm$ is a homomorphism of partial groups $\loch \to B\Out(\locm)$.

\end{itemize}

\end{defi}



\section{Fibre bundles and extensions}\label{Sfibund}

In this section we review the concept of fibre bundle for simplicial sets, and study certain features of fibre bundles involving partial groups. As we show in this sextion, fibre bundles provide the right setup to develop a consistent extension theory for partial groups. The theory of fibre bundles of simplicial sets is widely studied \cite{BGM,Curtis,May}. In particular,  fibre bundles with fixed fibre and base space are classified, up to fibrewise equivalence. We   recall here the notion of twisting function and twisted cartesian products, for later use. 

\begin{defi} 

A \emph{fibre bundle} of simplicial sets is a simplicial map $f\colon E\Right0{} B$ satisfying the following conditions:
\begin{enumerate}[\rm (a)]

\item $f$ is surjective; and

\item  there is a simplicial set $F$ such that, for any simplex $\7b\colon \Delta[n] \Right0{}B$, there is a isomorphism $\phi_{\7b}\colon F\times \Delta[n] \Right0{} E_{\7b}$ making the following diagram 
\begin{equation}\label{bundlecondition}
 \xymatrix{
F\times \Delta[n] \ar[r]^-{\phi_{\7b}}_-\cong  \ar[dr] & E_{\7b}  \ar[d] \ar[r]^{\tilde{\7b}} &  E\ar[d]^f \\
 &   \Delta[n] \ar[r]^{\7b} &  B
}
\end{equation}
where $E_{\7b}\Right0{}\Delta[n]$ is the pullback of $E\Right0{}B$ along ${\7b}$.

\end{enumerate}
$F$ is called the \emph{fibre} of the bundle, $B$ is the \emph{base space} and $E$ the \emph{total space}.

\end{defi}

\begin{defi}

Let $\locm$ and $\loch$ be partial groups. An \emph{extension} of $\loch$ by $\locm$ is a fibre bundle $\tau \colon\loce \to \loch$ with fibre $\locm$.

\end{defi}

\begin{defi}
Two extensions $\locm\Right0{}\loce_i\Right0{}\loch$, $i=1,2$, are 
equivalent if  there is an isomorphism $f\colon \loce_1\Right0{}\loce_2$ 
such that the diagram 
$$
\xymatrix{\locm \ar[r] \ar@{=}[d] &  \loce_1 \ar[r]\ar[d]^\varphi &  \loch \ar@{=}[d] \\
            \locm \ar[r]  &  \loce_2 \ar[r]&  \loch
}
$$
is commutative.

\end{defi}

This definition is consistent with the concept of fibre homotopy equivalence of fibre bundles (strong homotopy equivalence 
in \cite{BGM}). Two fibre bundles $\loce_1 \Right0{\tau_1}  \loch$
 with the same fibre $\locm$ are fibrewise homotopy equivalent if there 
 are maps   $f\colon \loce_1\Right0{}\loce_2$ and  
 $g\colon \loce_2\Right0{}\loce_1$ over $\loch$, (i.e.,
   $\tau_1 = \tau_2\circ f$ and  $\tau_1\circ g = \tau_2$) 
   such that $f\circ g$ and $g\circ f$ 
   are homotopic to the identity over $\loch$. 
   According to Lemma~\ref{he=iso}, both $f$  and $g$ are isomorphisms.

Given an extension of partial groups $\locm\Right0{}\loce\Right0{\tau}\loch$, we can easily prove 
that the total space $\loce$ is a partial monoid. Indeed, It is clear that $\loce$ has a unique vertex since so happens with $\locm$ and $\loch$. 
Next, let $\7x$ and $\7y$ be two $n$-simplices of $\loce$ with same spine, 
$\7e^n(\7x) = \7e^n(\7y)$. Then, the same is true for $\tau(\7x)$ and $\tau(\7y)$, but 
$\loch$ is a partial group, so they are equal. Write $\7b=\tau(\7x)=\tau(\7y)$ for simplicity.
Following the notation of diagram~\eqref{bundlecondition}, we have 
$\7x\,,\,\7y\in \loce_{\7b}$. Now $\loce_{\7b}\cong \locm\times\Delta[n]$ has injective spine operators, hence $\7x=\7y$. 

Proving that $\loce$ has also an inversion, and therefore it is a partial group, might not be 
so apparent at first glance. However, it follows at once from the structure Theorem for extensions 
of partial groups. This constitutes the main result of this section, stated as Theorem \ref{twistedn-simpl} below.

\begin{defi}\label{rmktwistp} 

Let $\locm$ and $\loch$ be partial groups. An \emph{$\loch$-twisting pair for $\locm$} is a pair of functions $(t,\eta)$,
$$
t \colon \loch_1 \Right3{} \Aut(\locm) \qquad \mbox{and} \qquad \eta \colon \loch_2 \Right3{} N(\locm),
$$
satisfying the following conditions:
\begin{enumerate}[\rm (a)]
\item $\eta(g,h)$ determines a homotopy $t(g)\circ t(h) \Left2{\eta(g,h)} t(gh)$, for each pair $[g|h] \in \loch_2$;

\item $t(1)=\Id$ and $\eta(g,1)=1=\eta(1,g)$, for all $g\in \loch_1$; and

\item (cocycle condition) for all $[g|h|k] \in \loch_3$,
$$
t(g)(\eta(h,k)) \cdot \eta(g, hk) = \eta(g,h) \cdot \eta(gh,k).
$$
\end{enumerate}
\end{defi}

In particular, an $\loch$-twisting pair for $\locm$ determines an \emph{outer action} of $\loch$ on $\locm$, that is, a homomorphism (of partial groups) $\loch \to B\Out(\locm)$.

\begin{thm}\label{twistedn-simpl}

Let $\locm$ and $\loch$ be partial groups. Then, the following holds.
\begin{enumerate}[\rm (a)]

\item An $\loch$-twisting pair $\phi=(t,\eta)$  for $\locm$ defines an extension $\locm \times_{\phi} \loch$ of $\loch$ by $\locm$, which is a partial group with $n$-simplices $[(x_1, g_1)|\ldots |(x_n, g_n)]$ satisfying the conditions
\begin{enumerate}[\rm (i)]

\item $[g_1|\ldots|g_n] \in \loch_n$; and

\item $[x_1| t(g_1)(x_2)| (t(g_1) \circ t(g_2))(x_3)| \ldots | (t(g_1) \circ \ldots\circ  t(g_{n-1}))(x_n)] \in \locm_n$.

\end{enumerate}
The product and inverse formulae for $\locm \times_{\phi} \loch$ are, respectively,
\begin{equation}\label{cond3}
\Pi[(x,g)|(z,h)] = [(x\cdot t(g)(z)\cdot \eta(g,h), g \cdot h)]
\end{equation}
\begin{equation}\label{cond4}
[(x, g)]^{-1} = [(\eta(g^{-1},g)^{-1}\cdot t(g^{-1})(x^{-1}),g^{-1})]\,,
\end{equation}
in terms of products and inverses in $\locm$ and in $\loch$. The projection $\tau \colon \locm\times_\phi \loch \Right1{} \loch$ is defined on 
$n$-simplices as $\tau[(x_1,g_1)|\ldots|(x_n,g_n)] = [g_1|\ldots|g_n]$.

\item Given an extension $\locm\Right1{}\loce\Right1{\rho} \loch$, there is an $\loch$-twisting pair $\phi=(t,\eta)$  for $\locm$, and an equivalence of extensions
$$
\xymatrix{  \locm\times_\phi BG  \ar[d]_{\tau} \ar[r]^-\cong & \loce \ar[d]^\rho \\
\loch \ar@{=}[r] & \loch
}
$$

\end{enumerate}
In particular, the total space of an extension of partial groups is again a partial group.

\end{thm}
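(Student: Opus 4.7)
The plan is to treat the two halves separately. For part (a), I would construct $\locm \times_{\phi} \loch$ explicitly as a simplicial set, verify the partial group axioms using the conditions of Definition~\ref{rmktwistp}, and then check the fibre bundle property. For part (b), I would extract a twisting pair from a given extension $\loce$ by choosing a compatible normalized section over the $1$-skeleton of $\loch$ and reading off the conjugation action and the comparison terms on $2$-simplices. The conclusion that the total space is always a partial group then follows at once by combining (b) with (a).

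For (a), I would define the simplicial set $\locm \times_{\phi} \loch$ to have as $n$-simplices the sequences $[(x_1, g_1)|\ldots|(x_n, g_n)]$ subject to conditions (i) and (ii). Degeneracies are given by inserting $(1,1)$, which is consistent thanks to $t(1) = \Id$ and $\eta(g,1) = \eta(1,g) = 1$. Face maps are determined by requiring $\tau$ to be simplicial on the $\loch$-coordinate and by compatibility with the product formula \eqref{cond3} on the $\locm$-coordinate; the inner face $d_i$ contracts the $i$th and $(i+1)$st slots via $(x_i, g_i) \cdot (x_{i+1}, g_{i+1})$, while $d_0$ and $d_n$ simply drop a slot after applying the appropriate $t(g_1)^{-1}$ to keep condition (ii) in place. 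The only non-formal simplicial identity is $d_1 d_2 = d_1 d_1$ on a $3$-simplex $[(x,g)|(y,h)|(z,k)]$, and here the cocycle condition \ref{rmktwistp}(c) is precisely what is needed to make the two sides match in $\locm_1$. Reducedness (PM1) is obvious, and (PM2) holds because the spine of $[(x_1,g_1)|\ldots|(x_n,g_n)]$ tautologically recovers the sequence $((x_i,g_i))$. The inversion \eqref{cond4} is checked to satisfy (I1) and (I2) by a direct manipulation in $\locm$, using that $\eta(g^{-1},g) \in N(\locm)$ and Lemma~\ref{N(M)conj} to commute the automorphism $t(g)^{-1} = \cj{\eta(g^{-1},g)} \circ t(g^{-1})$ past the product. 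Finally, the fibre bundle property is built in: for each $\7b = [g_1|\ldots|g_n] \in \loch_n$, the fibre over $\7b$ is in natural bijection with $\locm \times \Delta[n]$ by the formula $(x_1,\dots,x_n) \mapsto [(x_1,g_1)|(t(g_1)^{-1}(x_2),g_2)|\ldots]$, and this bijection is compatible with pulling back along maps of $\Delta$.

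For (b), starting from the extension $\loce \Right1{\rho} \loch$, the fibre bundle condition applied to each $1$-simplex gives, in a normalized manner with $\sigma(1)=1$, a set-theoretic section $\sigma \colon \loch_1 \to \loce_1$ such that the trivialization over $[g]$ identifies $\sigma(g)$ with $(1,g)$. For each $g$, the trivialization identifies the fibres at the two endpoints of $\sigma(g)$ with $\locm$, so conjugation by $\sigma(g)$ descends to an automorphism $t(g) \in \Aut(\locm)$. For $[g|h] \in \loch_2$, a lift via the fibre bundle property produces a $2$-simplex whose degenerate faces $\sigma(g)$ and $\sigma(h)$ have product that differs from $\sigma(gh)$ by a unique element $\eta(g,h) \in \locm_1$; the relation $\eta(g,h) \cdot t(gh) = t(g) \circ t(h)$ on the fibre forces $\eta(g,h) \in N(\locm)$ and produces the homotopy $t(g) \circ t(h) \Left2{\eta(g,h)} t(gh)$ required by Definition~\ref{rmktwistp}(a). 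Lifting $3$-simplices $[g|h|k]$ and comparing the two ways of factoring yields the cocycle condition. The equivalence of extensions is then given on $n$-simplices by $[(x_1,g_1)|\ldots|(x_n,g_n)] \mapsto [x_1 \cdot \sigma(g_1)|\ldots|x_n \cdot \sigma(g_n)]$; the partial monoid axioms and Lemma~\ref{he=iso} show that this is an isomorphism over $\loch$.

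The main obstacle I expect is in part (a): checking that the prescribed face maps genuinely satisfy the full list of simplicial identities and preserve condition (ii). Each face map involves reassembling the $\locm$-component by threading applications of $t(g_i)$ and insertions of $\eta(g_i, g_{i+1})$ through the bracket, so the equalities $d_i d_j = d_{j-1} d_i$ must be reduced to identities in $\locm$ that follow from axioms (a)--(c) of Definition~\ref{rmktwistp} together with the algebraic properties of $N(\locm)$ from Lemma~\ref{N(M)conj}. I would carry this out by verifying the identities on adjacent-index pairs (which reduce to associativity plus the cocycle condition) and then arguing general cases by induction on the separation $j - i$, using that the non-adjacent face maps commute essentially formally.
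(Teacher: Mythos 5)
Your proposal is essentially correct, but it takes a genuinely different route from the paper. The paper does not construct the simplicial set $\locm\times_\phi\loch$ by hand: it defines it as a twisted cartesian product in the sense of \cite{BGM}, so that the simplicial identities hold automatically once one knows that the data $(t,\eta)$ gives rise to a genuine twisting function $\phi\colon\loch\to\nv{\calaut(\locm)}$. That translation is the content of the separate Lemma~\ref{tpair}, which establishes a bijection between $\loch$-twisting pairs and twisting functions (the cocycle condition of Definition~\ref{rmktwistp}(c) corresponding exactly to the identity $\phi_{n-1}(d_1\7b)=d_0\phi_n(\7b)\cdot\phi_{n-1}(d_0\7b)$); part (b) then follows from the classification theorem of \cite[IV.3.2]{BGM} (quoted as Theorem~\ref{BGM432}) together with that bijection, and the written proof of Theorem~\ref{twistedn-simpl} only has to re-coordinatize the simplices into the bracket form, check injectivity of the spine operator, and verify the inversion formula. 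You instead build the face and degeneracy maps directly on the bracket expressions and verify the identities from the axioms of a twisting pair, and in (b) you extract $(t,\eta)$ from local trivializations over the $1$- and $2$-skeleton. This buys self-containedness and avoids the detour through $\nv{\calaut(\locm)}$, at the cost of absorbing into your part (a) the bookkeeping that the paper delegates to the general theory (well-definedness of the face maps, i.e.\ preservation of condition (ii), plus the full list of identities — note that in bracket coordinates the $d_0$-identities are formal precisely because condition (ii) is stated relative to the first slot, so your claim that associativity $d_id_{i+1}=d_id_i$ is the only non-formal case is right), and into your part (b) the assembly problem that Theorem~\ref{BGM432} solves: showing that the simplexwise trivializations can be normalized compatibly so that $t(g)$ is a well-defined automorphism, that the comparison elements satisfy the cocycle condition on $3$-simplices, and that the resulting map $\locm\times_\phi\loch\to\loce$ is simplicial in all degrees rather than just on the $2$-skeleton. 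These are exactly the points where your sketch is thinnest, but they are fillable, and your identification of where the cocycle condition and the membership $\eta(g,h)\in N(\locm)$ enter matches the paper's.
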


The rest of this section is devoted to the proof of this result. This is based on the theory of fibre bundles 
of simplicial sets developed in \cite{BGM}. Roughly speaking, a fibre bundle is equivalent to a twisted 
cartesian product, and this is determined by a twisting function. In Lemma~\ref{tpair}, 
we show that in the case of partial groups there is a bijective correspondence between 
twisting functions and twisting pairs as defined in Definition~\ref{rmktwistp}. 

In view of Theorem~\ref{twistedn-simpl}, the classification of extensions can be carried out 
following the lines of the classical case of finite groups. This is made precise in Theorem~\ref{classext}.

\medskip




Recall from \cite{BGM} that given simplicial sets $F$ and $B$, 
a \emph{twisting function} $\phi\colon B \Right0{} \aut(F)$ is a collection of maps
$$\phi_n\colon B_n \Right2{} \aut(F)_{n-1}\,, \qquad n\geq 1$$
satisfying the following conditions for all $\7b \in B_n$:
\begin{equation}\label{tau}
\begin{aligned}
\phi_{n-1}(d_i\7b) &= d_{i-1}\phi_n(\7b)\,, &  & \qquad 2 \leq i \leq n,\\
\phi_{n-1}(d_1\7b) & = d_{0}\phi_n(\7b)\cdot \phi_{n-1}(d_0\7b),\\
\phi_{n+1}(s_i\7b) &= s_{i-1}\phi_n(\7b)\,,&  & \qquad i \geq 1, \\
\phi_{n+1}(s_0\7b) & = \7 1 \in \Gamma_n. 
\end{aligned}
\end{equation}

A twisting function $\phi\colon B \Right0{} \aut(F)$ determines a fibre bundle over $B$ with  
fibre $F$, where the total space is the twisted cartesian product $F\times_\phi B$, namely, the simplicial set 
 with $n$-simplices 
$(F\times_\phi B)_n = F_n\times B_n$ and with face and degeneracy maps defined as
\begin{align*}
 d_0(\7x,\7b) & =   ( \phi(\7b)^{-1} \cdot d_0\7x, d_0\7b) \,,\\
 d_i(\7x,\7b) & =   (d_i\7x,d_i\7b)      & & i\geq1 \,, \\
  s_i(\7x,\7b) & =   (s_i\7x,s_i\7b)      & & i\geq0\,,
\end{align*}
for all simplices $\7x\in F_n$, $\7b\in B_n$, and $n\geq0$. The natural projection $F\times_\phi B \Right0{} B$ is a fibre bundle with fibre $F$. The following well known result appears in \cite[Proposition IV.3.2]{BGM}

\begin{thm}\label{BGM432}
Given a fibre bundle $\tau \colon E\Right0{} B$ with fibre $F$, there is a twisting function 
$\phi\colon B \Right0{} \aut(F)$ and a fibrewise equivalence $E\cong F\times_\phi B$. 
\end{thm}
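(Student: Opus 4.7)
The plan is to follow the classical route used in \cite{BGM}: exploit the local trivializations to build a coherent family of trivializing isomorphisms whose failure of compatibility with the $d_0$ face map \emph{is} the twisting function. The approach splits naturally into construction and verification.

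First, I would fix local trivializations. For each simplex $\7b \colon \Delta[n] \to B$ the bundle condition gives an isomorphism $\phi_{\7b}\colon F \times \Delta[n] \to E_{\7b}$. By an induction on dimension (working through the non-degenerate simplices, then extending to degenerate ones), one can refine this family so that the trivializations are strictly compatible with every face operator $d_i$ for $i \geq 1$ and with every degeneracy $s_i$. This compatibility is possible because the simplicial inclusions $d^i\colon \Delta[n-1] \to \Delta[n]$ for $i \geq 1$ and $s^i$ all land inside the ``upper'' part of $\Delta[n]$, where extensions of trivializations can be performed using the lifting property inherent in bundles over a prism. The asymmetry of $d_0$ is the reason the construction produces a nontrivial twisting.

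Next, I would define the twisting function. For $\7b \in B_n$, the restriction of $\phi_{\7b}$ to the $d^0$-face yields a trivialization of $E_{d_0\7b}$ that need not agree with the chosen $\phi_{d_0\7b}$. The comparison $\phi_{d_0\7b}^{-1}\circ \phi_{\7b}|_{F\times d^0(\Delta[n-1])}$ is an automorphism of $F\times \Delta[n-1]$ over $\Delta[n-1]$, hence an element $\phi(\7b) \in \aut(F)_{n-1}$. The identities in \eqref{tau} for $i\geq 2$ and for degeneracies follow immediately from the strict compatibility built into the trivializations. The remaining identity $\phi(d_1\7b) = d_0\phi(\7b)\cdot \phi(d_0\7b)$ is the statement that two ways of trivializing $E_{d_0 d_1 \7b}$ (via $d_0 d_1 = d_0 d_0$) agree after transport, and the normalization $\phi(s_0 \7b) = 1$ follows from the compatibility with $s_0$.

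Finally, the map $(x,\7b) \mapsto \phi_{\7b}(x,\iota_n)$ defines a bijection $F\times_{\phi} B \to E$ on simplices, and a direct check using the definitions of the twisted face maps shows this is simplicial and commutes with the projections to $B$. I expect the main obstacle to be the inductive choice of trivializations in the first step: one must ensure that the compatibility conditions imposed at stage $n$ are consistent with those already fixed at lower stages, which requires care with non-degenerate versus degenerate simplices and an appeal to the homotopy extension property in the simplicial category. Once that coherent family exists, the rest of the argument is a bookkeeping exercise with simplicial identities, and the proof reduces to the cited statement \cite[Proposition IV.3.2]{BGM}.
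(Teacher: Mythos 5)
The paper does not prove this statement at all: it is quoted verbatim as \cite[Proposition IV.3.2]{BGM}, so there is no internal argument to compare against. Your outline is a correct reconstruction of the classical Barratt--Gugenheim--Moore/May proof: normalize an atlas of trivializations $\{\phi_{\7b}\}$ so that it is strictly compatible with the faces $d_i$, $i\geq 1$, and with all degeneracies; read off the twisting function as the discrepancy along $d_0$; and assemble the fibrewise isomorphism $(x,\7b)\mapsto \phi_{\7b}(x,\iota_n)$. Two points deserve sharpening if this were written out in full. First, in the normalization step the justification is not a generic ``homotopy extension property'': what one actually uses is that the union of the faces $d^i(\Delta[n-1])$, $i\geq 1$, is the horn $\Lambda^0[n]$, that an automorphism of $F\times K$ over $K$ is the same thing as a map $K\Right1{}\aut(F)$, and that $\aut(F)$ is a simplicial group and hence a Kan complex, so the required extension over $\Delta[n]$ exists; this matters in the present context because the fibres one cares about (partial groups) are themselves \emph{not} Kan, and the Kan property must be located in $\aut(F)$, not in $F$ or $E$. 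Second, with the paper's convention $d_0(\7x,\7b)=(\phi(\7b)^{-1}\cdot d_0\7x,\,d_0\7b)$ for the twisted product, the comparison map is simplicial only if you define the twisting function as the inverse of the automorphism $\phi_{d_0\7b}^{-1}\circ\phi_{\7b}|_{F\times d^0(\Delta[n-1])}$ you wrote down (or equivalently swap which trivialization is inverted); as stated, the $d_0$ check fails by exactly one inversion. Both are bookkeeping corrections rather than gaps in the strategy, and the argument otherwise goes through as you describe.
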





In the next Lemma we carry out the precise relationship between the twisting functions for partial groups
and twisting pairs as defined in \ref{rmktwistp}.

%
%
%
%
%
%
%

\begin{lmm} \label{tpair}

Let $\locm$ and $\loch$ be partial groups. An $\loch$-twisting pair $(t,\eta)$ for $\locm$ determines a unique twisting function $\phi\colon \loch \Right0{}  \nv{\calaut(\locm)}$ with 
\begin{equation}\label{settf0}
\begin{split}
  \phi_1[g] &= t(g)\in \Aut(\locm)\,,\quad\text{and} \\ 
\phi_2[g|h] &=
          \bigl( t(g)\Left2{\eta(g,h)}t(gh)t(h)^{-1}  \bigr ) \in\Mor(\calaut(\locm))
\end{split}
\end{equation}
Moreover, every twisting function $\phi\colon \loch\Right0{}  \nv{\calaut(\locm)}$ is of this form.

\end{lmm}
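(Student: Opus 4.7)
The plan is to establish the bijection by exploiting the fact that the simplicial group $\nv{\calaut(\locm)} \cong \aut(\locm)$ (Theorem~\ref{isoaut}) is the nerve of a $1$-category and is therefore $2$-coskeletal: every $(n-1)$-simplex for $n\geq 3$ is determined by its $2$-dimensional face data, and the latter is in turn determined by its $1$-skeleton plus composability in $\calaut(\locm)$. Consequently a twisting function $\phi$ is rigidly determined by $\phi_1$ and $\phi_2$, once one verifies that the twisting function conditions \eqref{tau} in dimension $3$ (and by induction in all higher dimensions) are automatic. The lemma will then follow from matching conditions \eqref{tau} in dimensions $\leq 3$ with the axioms (a)--(c) of Definition~\ref{rmktwistp}.

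\emph{Forward direction (pair to function).} Given an $\loch$-twisting pair $(t,\eta)$, I define $\phi_1[g] = t(g)$. Since by Definition~\ref{rmktwistp}(a) we have $t(g)\circ t(h) \Left{2}{\eta(g,h)} t(gh)$, Lemma~\ref{ho.eq}(b) applied to precomposition with $t(h)^{-1}$ yields $t(g) \Left{2}{\eta(g,h)} t(gh)\circ t(h)^{-1}$, so the formula
$$
\phi_2[g|h] = \bigl(\, t(g)\Left{2}{\eta(g,h)} t(gh)\circ t(h)^{-1}\,\bigr)
$$
defines a genuine $1$-simplex of $\nv{\calaut(\locm)}$. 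The two dimensional twisting function conditions reduce to the identities $d_1\phi_2[g|h]=t(g)=\phi_1(d_2[g|h])$ and $d_0\phi_2[g|h]\cdot\phi_1(d_0[g|h]) = t(gh)t(h)^{-1}\cdot t(h)=\phi_1(d_1[g|h])$, both built into the construction, while the degeneracy conditions match the normalization $\eta(1,g)=\eta(g,1)=1$ and $t(1)=\Id$ from Definition~\ref{rmktwistp}(b). For $n\geq 3$, I define $\phi_n[g_1|\cdots|g_n]$ as the unique $(n-1)$-simplex of the nerve with the prescribed face data dictated by \eqref{tau}; by $2$-coskeletality this is well defined provided the induced $2$-faces are compatible, and the only genuine compatibility to check occurs on a $3$-simplex $[g|h|k]$ of $\loch$.

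\emph{Reverse direction (function to pair).} Given $\phi$, set $t(g)=\phi_1[g]$, which lies in $\Aut(\locm)$ since $\aut_0(\locm)=\Aut(\locm)$. The $1$-simplex $\phi_2[g|h]\in \nv{\calaut(\locm)}_1$ is, by the description of $\calaut(\locm)$, a morphism of $N(\locm)$-type between two automorphisms; the face conditions force its source and target to be $t(gh)t(h)^{-1}$ and $t(g)$ respectively, and by Lemma~\ref{homot=conj} it is determined by a unique element $\eta(g,h)\in N(\locm)$ with $t(g) \Left{2}{\eta(g,h)} t(gh)t(h)^{-1}$. Lemma~\ref{ho.eq}(b) converts this into the homotopy $t(g)\circ t(h)\Left{2}{\eta(g,h)} t(gh)$ required by Definition~\ref{rmktwistp}(a), and the degeneracy conditions on $\phi$ give Definition~\ref{rmktwistp}(b).

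\emph{Main obstacle: the cocycle identity.} The real work is matching the twisting function conditions on a $3$-simplex $[g|h|k]$ with the cocycle condition \ref{rmktwistp}(c). Here one must unwind what it means for the four $1$-simplices $\phi_2[g|h]$, $\phi_2[gh|k]$, $\phi_2[g|hk]$, $\phi_2[h|k]$ (appropriately modified by the second condition in \eqref{tau}, which involves the group operation in the nerve) to fit together as the $2$-faces of a $2$-simplex in $\nv{\calaut(\locm)}$, i.e.\ as a pair of composable morphisms in $\calaut(\locm)$. Using the monoidal rule \eqref{tensor}, $\xi\otimes\xi'=\xi\cdot\beta(\xi')$, this composability translates precisely to the equality
$$
t(g)\bigl(\eta(h,k)\bigr)\cdot \eta(g,hk) \;=\; \eta(g,h)\cdot \eta(gh,k)
$$
in $N(\locm)$, which is the cocycle condition of Definition~\ref{rmktwistp}(c). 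Once this is in place, an induction on $n$, combined with $2$-coskeletality of the nerve, propagates the construction uniquely to all $\phi_n$ and shows that the two assignments are mutually inverse.
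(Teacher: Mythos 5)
Your proposal is correct, but the forward (existence) direction takes a genuinely different route from the paper's. Where you define $\phi_n$ for $n\geq 3$ implicitly, as the unique filler of the face data forced by \eqref{tau}, and invoke $2$-coskeletality of the nerve $\nv{\calaut(\locm)}$ to reduce all higher-dimensional verifications to the single composability condition on $3$-simplices (which, unwound via \eqref{tensor}, is exactly the cocycle condition of Definition~\ref{rmktwistp}(c)), the paper instead writes down closed formulas \eqref{settf}--\eqref{settf2} for every $\phi_n$ and verifies the twisted identity $\phi_{n-1}(d_1\7g)=d_0\phi_n(\7g)\cdot\phi_{n-1}(d_0\7g)$ by hand, at the cost of a page of computation (the identities $\eta_j\cdot\alpha_{j+1}(\zeta_j)=\mu_j$, each using the cocycle condition once or twice). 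Your route is more economical, but one step deserves to be made explicit: a twisting function is not a simplicial map into the nerve --- the $d_0$-face in \eqref{tau} is twisted by the group multiplication --- so the assertion that the prescribed codimension-one faces of $\phi_n(\7b)$ form a compatible family for $n\geq 4$ is not literally an instance of coskeletality of the target. It is a short induction using \eqref{tau} in dimensions $n-1$ and $n-2$, the simplicial identities in $\loch$, and the fact that the face maps of $\nv{\calaut(\locm)}$ are group homomorphisms; with that observation spelled out, your reduction to the $n=3$ case is valid. The reverse direction and the uniqueness argument coincide with the paper's (your appeal to $2$-coskeletality is the paper's injectivity of the spine operator, via the factorization through $(\7e^n_1,d_0)$). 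What the paper's longer verification buys in exchange is an explicit description of $\phi_n$ in terms of $t$ and $\eta$, which is reused implicitly later, for instance in the proofs of Theorem~\ref{twistedn-simpl} and Lemma~\ref{NEL}.
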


\begin{proof}

Let $(t, \eta)$ be an $\loch$-twisting pair for $\locm$. For each $\7h = [h_1|\ldots|h_m] \in \loch_m$ and for each $1 \leq i \leq j \leq n$, set $h_{i,j} = h_i \cdot \ldots \cdot h_j$. Also, set
\begin{equation}\label{settf}
\begin{split}
\alpha_0 & = t(h_1)\\
\alpha_j & = t(h_{1, j+1}) \circ t(h_{2, j+1})^{-1} \qquad j = 1, \ldots, m-1\\
\eta_0 & = \eta(h_1, h_2) \\
\eta_j & = \eta(h_1, h_{2,j})^{-1} \cdot \eta(h_1, h_{2, j+1}) \qquad j = 1, \ldots, m-2\\
\end{split}
\end{equation}
and define
\begin{equation}\label{settf2}
 \phi_m[h_1|\ldots|h_m] = (\alpha_0 \Left2{\eta_0} \ldots \Left2{\eta_{m-2}} 
 \alpha_{m-1}) \in |\calaut(\locl)|_{m-1}
\end{equation}
for $m\geq1$, thus extending (\ref{settf0}).

For $k = 2, \ldots, m$, the element $\eta(h_1, h_{2,k})$ defines a homotopy $t(h_1) \Left2{\eta(h_1, h_{2,k})} t(h_{1, k}) \circ t(h_{2,k})^{-1}$ by Definition \ref{rmktwistp} (a). Thus, for each $j = 0, \ldots, m-2$ there are homotopies
$$
\alpha_j \Left3{\eta_j} \alpha_{j+1}.
$$
This shows that $\phi =\{\phi_n\}_{n\geq1}$ is a well defined function.

We claim that it is indeed a twisting function. The conditions listed in \eqref{tau} all follow by inspection except for the second one:
\begin{equation}\label{condii}
 \phi_{n-1}(d_1\7g) = d_0\phi_n(\7g)\cdot \phi_{n-1}(d_0\7g)
\end{equation}
 for an arbitrary $n$-simplex  $\7g=[g_1|\dots|g_n]$, $n\geq2$,
depends on the cocycle condition, (c) of Lemma~\ref{rmktwistp}. According to the formulae \eqref{settf}, we have
\begin{enumerate}[(A)]

\item $d_0\phi_n(\7g) = (\alpha_1 \Left2{\eta_1} \ldots \Left2{\eta_{n-2}} \alpha_{n-1})$, where
$$
\begin{aligned}
\alpha_j & = t(g_{1,j+1}) \circ t(g_{2,j+1})^{-1} \qquad j = 1, \ldots, n-1\\
\eta_j & = \eta(g_1, g_{2,j+1})^{-1} \cdot \eta(g_1, g_{2, j+2}) \qquad j = 1, \ldots, n-2\\ 
\end{aligned}
$$

\item $\phi_{n-1}(d_0\7g) = (\beta_1 \Left2{\zeta_1} \ldots \Left2{\zeta_{n-2}} \beta_{n-1})$, where
$$
\begin{aligned}
\beta_1 & = t(g_2) \\
\beta_j & = t(g_{2, j+1}) \circ t(g_{3, j+1})^{-1} \qquad j = 2, \ldots, n-1\\
\zeta_1 & = \eta(g_2,g_3)\\
\zeta_j & = \eta(g_2, g_{3,j+1})^{-1} \cdot \eta(g_2, g_{3,j+2}) \qquad j = 2, \ldots, n-2
\end{aligned}
$$

\item $ \phi_{n-1}(d_1\7g)  = (\gamma_1 \Left2{\mu_1} \ldots \Left2{\mu_{n-2}} \gamma_{n-1})$, where
$$
\begin{aligned}
\gamma_1 & = t(g_{1,2})\\
\gamma_j & = t(g_{1,j+1}) \circ t(g_{3,j+1})^{-1} \qquad j = 2, \ldots, n-1\\
\mu_1 & = \eta(g_{1,2}, g_3)\\
\mu_j & = \eta(g_{1,2}, g_{3,j+1})^{-1} \cdot \eta(g_{1,2}, g_{3,j+2}) \qquad j = 2, \ldots, n-2
\end{aligned}
$$

\end{enumerate}
Moreover, the product in the right hand side of (\ref{condii}) is the simplicial group operation induced by the product $\otimes$ defined on $\calaut(\locm)$  by equation~(\ref{tensor}):
$$
d_0\phi_n(\7g)\cdot \phi_{n-1}(d_0\7g) = (\alpha_1 \circ \beta_1 \Left6{\eta_1 \alpha_2(\zeta_1)} \ldots \Left9{\eta_{n-2} \alpha_{n-1}(\zeta_{n-2})} \alpha_{n-1} \circ \beta_{n-1}).
$$
It is immediate to check that $\alpha_j \circ \beta_j = \gamma_j$ for $j = 1, \ldots, n-1$.

Thus, in order to check \eqref{condii} it remains to show that
$$
\eta_j \cdot \alpha_{j+1}(\zeta_j) = \mu_j
$$
for $j = 1, \ldots, n-2$. Let $j = 1$ first. Then, we have
\begin{align*}
\eta_j \cdot \alpha_{j+1}(\zeta_j) & = \eta(g_1,g_2)^{-1} \cdot \eta(g_1, g_{2,3}) \cdot (t(g_{1,3}) \circ t(g_{2,3})^{-1})(\eta(g_2,g_3)) = \\
 & = \eta(g_1,g_2)^{-1} \cdot \eta(g_1, g_{2,3}) \cdot (\cj{{\eta(g_1, g_{2,3})^{-1}}} \circ t(g_1))(\eta(g_2,g_3)) = \\
 & = \eta(g_1,g_2)^{-1} \cdot \eta(g_1, g_{2,3}) \cdot \eta(g_1, g_{2,3})^{-1} \cdot t(g_1)(\eta(g_2,g_3)) \cdot \eta(g_1, g_{2,3}) = \\
 & = \eta(g_{1,2},g_3) = \mu_1
\end{align*}
The equality between lines one and two is justified as follows: by Definition \ref{rmktwistp} (a), the element $\eta(g,h)$ defines a homotopy $t(g) \circ t(h) \Left1{\eta(g,h)} t(gh)$; and the equality now follows by Lemma \ref{ho.eq} (b), together with Definition \ref{definorm}. The equality between lines three and four follows by the cocycle condition in Definition \ref{rmktwistp} (c).

A similar argument now shows that, for $j = 2, \ldots, n-2$, we have:
\begin{align*}
\eta_j \cdot & \alpha_{j+1}(\zeta_j) = \\
 & = \eta(g_1, g_{2,j+1})^{-1} \cdot \eta(g_1, g_{2, j+2}) \cdot (t(g_{1,j+2}) \circ t(g_{2,j+2})^{-1})(\eta(g_2, g_{3,j+1})^{-1} \cdot \eta(g_2, g_{3,j+2})) = \\
 & = \eta(g_1, g_{2,j+1})^{-1} \cdot \eta(g_1, g_{2, j+2}) \cdot (\cj{{\eta(g_1, g_2,j+2)^{-1}}} \circ t(g_1))(\eta(g_2, g_{3,j+1})^{-1} \cdot \eta(g_2, g_{3,j+2})) = \\
 & = \eta(g_1, g_{2,j+1})^{-1} \cdot t(g_1)(\eta(g_2, g_{3,j+1})^{-1}) \cdot t(g_1)(\eta(g_2, g_{3,j+2})) \cdot \eta(g_1, g_{2, j+2}) = \\
 & = \eta(g_{1,2}, g_{3,j+1})^{-1} \cdot \eta(g_1,g_2)^{-1} \cdot \eta(g_1, g_2) \cdot \eta(g_{1,2}, g_{3,j+2}) = \mu_j
\end{align*}
The equality between lines four and five follows from two applications of the cocycle condition in Definition \ref{rmktwistp} (c) (one for the first two terms in line three, and another one for the last two terms). This proves equality \eqref{condii}, and thus $\phi$ as defined in \eqref{settf2} is a twisting function.

To finish the proof we have to show that every twisting function is defined in this way. Fix a twisting function $\phi \colon \loch \to |\calaut(\locm)|$ and define functions
$$
t \colon \loch_1 \Right2{} \Aut(\locm)  \qquad \mbox{and} \qquad \eta \colon \loch_2 \Right2{} N(\locm)
$$
as follows:
\begin{itemize}
\item $t(g) = \phi_1[g]\in  |\calaut(\locm)|_0 = \Aut(\locm)$,  for each $[g] \in \loch_1$; and
\item $\eta(g,h)\in N(\locm)$ is the label of the morphism
$$
\phi_2[g|h] = 
\bigl(t(g) \Left2{\eta(g,h)} t(gh) \circ t(h)^{-1}\bigr) \in  |\calaut(\locm)|_0 =  \Mor_{\calaut(\locm)}
$$ 
for each $[g|h] \in \loch_2$.

\end{itemize}
Notice that, according to the definition of $\nv{\calaut(\locm)}$ and the first two conditions in \eqref{tau}, $\phi_2[g|h]$ is really a homotopy from $d_0(\phi_2   ([g|h]))  =  \phi_1(d_2[g|h] )    \phi_1(d_0[g|h] )^{-1} = t(gh)t(h)^{-1}$ to $d_1\phi_2([g|h]) = \phi_1(d_2[g|h]) = t(g)$. We have to check that these functions satisfy conditions (a), (b) and (c) of Definition \ref{rmktwistp}, and that $\phi$ is determined by this twisting pair and the formulae (\ref{settf2}).

\textbf{Step 1.} Condition (a) of Definition \ref{rmktwistp}, follows by definition of $\eta$ and 
\ref{ho.eq}(b). Similarly, by the last two conditions in \eqref{tau}, we have
$\phi_2[g|1] = \phi_2(s_1[g]) =  s_0(\phi_1[g]) = s_0(t(g)) = (t(g) \Left2{1} t(g))$ and 
$\phi_2[1|g] = \phi_2(s_0[g]) =  (\Id \Left2{1} \Id)$,
thus, $\eta(g,1) = 1 = \eta(1,g)$. This is condition (b) in \ref{rmktwistp}.

To check condition (c) in \ref{rmktwistp}, fix $[g|h|k] \in \loch_3$ and set $\phi_3[g|h|k] = (\alpha_0 \Left2{\eta_1} \alpha_1 \Left2{\eta_2} \alpha_2)$. Applying property the first condition in \eqref{tau} with $i = 3, 2$ respectively, we get
\begin{align*}
\bigl(t(g) \Left3{\eta(g,h)} t(gh) \circ t(h)^{-1}\bigr) & = (\alpha_0 \Left2{\eta_1} \alpha_1) \\
\bigl(t(g) \Left3{\eta(g, hk)} t(ghk) \circ t(hk)^{-1}\bigr) & = (\alpha_0 \Left2{\eta_1 \eta_2} \alpha_2)
\end{align*}
Thus, $\phi_3[g|h|k] = \bigl(t(g) \Left3{\eta(g,h)} t(gh) \circ t(h)^{-1} \Left9{\eta(g,h)^{-1} \eta(g,hk)} t(ghk) \circ t(hk)^{-1}\bigr)$.

On the other hand, applying the second condition in \eqref{tau}, 
and the product structure of $|\calaut(\locm)|_1$, we get the following:
\begin{align*}
(\alpha_1 \Left2{\eta_2} \alpha_2) & = d_0(\phi_3[g|h|k]) = \phi_2[gh|k] \otimes \phi_2[h|k]^{-1} = \\
 & = (t(gh) \Left3{\eta(gh,k)} t(ghk) \circ t(k)^{-1}) \otimes (t(h) \Left3{\eta(h,k)} t(hk) \circ t(k)^{-1})^{-1} = \\
 & = (t(gh) \circ t(h)^{-1} \,\, \Left{9}{(t(gh) \circ t(h)^{-1})(\eta(h,k)^{-1}) \eta(gh,k)} \,\, t(ghk) \circ t(hk)^{-1}).
\end{align*}
Note that $\eta(g,h)$ determines a homotopy $(t(g) \Left2{\eta(g,h)} t(gh) \circ t(h)^{-1})$, and in particular this means that
$$
(t(gh) \circ t(h)^{-1})[y] = \eta(g,h)^{-1} \cdot t(g)[y] \cdot \eta(g,h)
$$
for all $[y] \in \locm_1$. Combining this with the above description of $\phi_3[g|h|k]$, we deduce that
\begin{align*}
\eta(g,h)^{-1} \cdot \eta(g,hk) & = \eta_2 = (t(gh) \circ t(h)^{-1})(\eta(h,k)^{-1}) \eta(gh,k) = \\
 & = \eta(g,h)^{-1} \cdot t(g)(\eta(h,k)^{-1}) \cdot \eta(g,h) \cdot \eta(gh,k).
\end{align*}
Property (c) of twisting pairs \ref{rmktwistp} follows immediately. 

To finish the proof we have to show that $\phi_n[g_1|\ldots|g_n]$ is given by the formulae (\ref{settf})
above, and this is actually easily checked inductively by applying the first condition in \eqref{tau} with $i = n-1, n$ to 
$\phi_n[g_1|\ldots|g_n]$. Recall that the simplicial group $|\calaut(\locm)|$ has underlying simplicial set the nerve of the category
$\calaut(\locm)$. In particular the spine operator 
\begin{multline*}
 \7e^n  \colon \bigl(\alpha_0\Left1{\eta_1}\Left1{}\dots \Left1{\eta_n}\alpha_n\bigr) 
      \in |\calaut(\locm)|_n  \\
\longmapsto
\bigl( (\alpha_0\Left1{\eta_1}\alpha_1), \dots , (\alpha_{n-1}\Left1{\eta_n}\alpha_n)\bigr) \in 
 |\calaut(\locm)|_1 \times \ldots \times |\calaut(\locm)|_1  \,. 
\end{multline*}
is injective for all $n\geq 1$. It   be convinient to observe that for $n\geq2$, 
$ \7e^n $ factors
as the composition
$$
 |\calaut(\locm)|_n     \Right4{(\7e^n_1, d_0)}   |\calaut(\locm)|_1   \times   |\calaut(\locm)|_{n-1} 
  \Right4{1\times\7e^{n-1}}   |\calaut(\locm)|_1 \times \ldots \times |\calaut(\locm)|_1 
 $$
where both maps are also injective.

Assume now, that we have two twisted functions $\phi_n, \psi_n\colon \loch\Right1{} |\calaut(\locm)|$, 
such that $\phi_i=\psi_i$ for $i=1,2$.
Assume iductively that $\phi_i=\psi_i$ for $i\leq n$, for $n\geq2$. Then choose an arbitrary 
$(n+1)$-simplex $\7g$ of $\loch$, and compare 
$\phi_{n+1}(\7g)$ and $\psi_{n+1}(\7g)$.

Notice that $\7e_1^{n}=d_2\circ d_3\circ\dots\circ d_n$, thus 
\begin{align*}
 e_1^{n}(\phi_{n+1}(\7g)) & = d_2\circ d_3\circ\dots\circ d_n(\phi_{n+1}(\7g)) =
 d_2\circ d_3\circ\dots\circ d_{n-1}(\phi_{n}(d_{n+1}\7g)) = \dots \\
& = \phi_2((d_3\circ\dots\circ d_n)\7g)\,\ \text{and also }\\
d_0(\phi_{n+1}(\7g)) &= \phi_{n}(d_1(\7g)) \cdot \phi_{n}(d_0\7g)^{-1}
\end{align*}
A similar computation holds for $\psi_{n+1}$ and then, by the induction hypothesis, 
$\7e_1^{n}(\phi_{n+1}(\7g)) =  \7e_1^{n}(\psi_{n+1}(\7g))$ and 
$d_0(\phi_{n+1}(\7g)) = d_0(\psi_{n+1}(\7g)) $. Since $(\7e_n^1,d_0)$, defined on 
$ |\calaut(\locm)|_n $ is injective, it follows that $\phi_{n+1}(\7g)=\psi_{n+1}(\7g)$.
This finishes the proof.
\end{proof}
\medskip


\begin{proof}[Proof of Theorem~\ref{twistedn-simpl}]

Set $\loce=\locm\times_\phi \loch$ for short, and note that $\loce$ contains a single vertex, 
corresponding to the pair $(v_{\locm}, v_{\loch}) \in \locm_0 \times \loch_0$. The $1$-simplices 
of $\loce$ are pairs $(x,g)\in \locm_1\times \loch_1$. The $2$-simplices are pairs 
$\sigma = ([x|y], [g|h])\in \locm_2\times \loch_2$. The edges operator on $\sigma$ 
has the following effect
$$
\7e^2(\sigma) = (d_2(\sigma) , d_0(\sigma))  = \bigl((x,g),  (\phi_2([g,h])^{-1}\cdot y, h) \bigr),
$$
and we can express the simplex $\sigma$ of $\loce$ as
$$
\sigma = [(x,g)| (\phi_2([g,h])^{-1}\cdot y, h)].
$$
By writing $z= \phi_2([g,h])^{-1}\cdot y$, so that $y= \phi_2([g,h])\cdot z = t(g)(z)\cdot \eta(g,h)$, we  can identify the 2-simplices of $\loce$ with all pairs
$$
[(x,g)|(z,h)]
$$
for which $[g|h] \in \loch_2$ and $[x| t(g)(z)\cdot \eta(g,h)]\in \locm_2$. The general 
expression for the simplices of $\locm \times_{\phi} \loch$ follows by induction. In particular, 
it is easy to see that the edge operator is injective on $\loce$ for all $n \geq 2$, and thus 
$\loce = \locm \times_{\phi} \loch$ is a partial group.

The product in $\loce$ is easily deduced now:
$$
(x,g)\cdot(z,h) = d_1([(x,g)|(z,h)]) = (x\cdot t(g)(z)\cdot \eta(g,h), gh)\,,
$$
which is formula \eqref{cond3}. Regarding the inversion on $\loce$, for each $[(x,g)]\in \loce_1$ we can define
$$
[(x, g)]^{-1} = [(\eta(g^{-1},g)^{-1}\cdot t(g^{-1})(x^{-1}),g^{-1})].
$$
Note that $[\eta(g^{-1},g)^{-1}\cdot t(g^{-1})(x^{-1}) | t(g^{-1})(x)\cdot \eta(g^{-1},g)] 
\in \locm_2$, since $\locm$ has an inversion and $\eta(g^{-1},g)\in N(\locm)$. 
This way it follows that
$$
[(\eta(g^{-1},g)^{-1}\cdot t(g^{-1})(x^{-1}),g^{-1})|(x, g) ]\in \loce_2,
$$
and the product formula yields $\Pi([(\eta(g^{-1},g)^{-1}\cdot t(g^{-1})(x^{-1}),g^{-1})|(x, g) ])=1$.
\end{proof}

Given a partial group $\loce$ and a partial subgroup $\locm \leq \loce$, let $\locn_{\loce}(\locm)$ be the set of simplices $[\eta_1|\ldots|\eta_n] \in \loce$ such that $\eta_i$ defines a homotopy of $\incl \colon \locm \to \loce$, for each $i = 1, \ldots, n$. There is no reason a priori to assume that $\locn_{\loce}(\locm)$ is a partial subgroup of $\loce$.

\begin{lmm}\label{NEL}
Let $\locm \Right1{\iota} \loce \Right1{\tau} \loch$ be an extension of partial groups. Then, the following holds.
\begin{enumerate}[\rm (a)]

\item The above extension restricts to an extension 
$$
      BN(\locm) \Right4{\iota|_{BN(\locm) }} \locn_{\loce}(\locm) 
                     \Right4{\tau|_{\locn_{\loce}(\locm)}} \loch\,,
$$
where $\locn_{\loce}(\locm)$ is a partial subgroup of $\loce$. 

\item 
Each element $z\in \locn_{\loce}(\locm)_1$ determines an automorphism $\7c_z\in\Aut(\locm)$
~and a homotopy:
 $$\iota\circ\7c_z\Left2{z} \iota$$

\item Restriction from $\loch$ to $N(\loch)$ induces a group extension 
$N(\locm) \Right1{} N(\loce) \Right1{} N(\loch)$.

\end{enumerate}
%
Henceforth, we shall refer 
to $\locn_{\loce}(\locm)$ as the \emph{normalizer partial subgroup} of $\locm$ in $\loce$.
\end{lmm}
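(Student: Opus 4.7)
My plan is to apply the structure Theorem~\ref{twistedn-simpl} to present the given extension as $\loce\cong\locm\times_\phi\loch$ for some $\loch$-twisting pair $\phi=(t,\eta)$, and then reduce all three statements to explicit computations in this twisted cartesian product, combined with the characterization of $N(\locm)$ given by Lemma~\ref{N(M)conj} and the fact that $N(\locm)$ is characteristic (Proposition~\ref{Nloclgroup}).

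For parts~(a) and~(b), the key identification is
$$
z=(x,g)\in\locn_{\loce}(\locm)_1 \iff x\in N(\locm),\qquad \7c_z=\cj{x}\circ t(g).
$$
To prove it, I will apply Lemma~\ref{homot=conj}(b) to the candidate homotopy $\iota\circ\7c_z\Left2{z}\iota$; combined with the product formula~\eqref{cond3}, this forces the identity $\7c_z(y)=x\cdot t(g)(y)\cdot x^{-1}$ in $\locm$, and after the substitution $y\leftrightarrow t(g)^{-1}(y)$ the insertion simplices demanded by condition~(i) of Lemma~\ref{homot=conj}(b) translate, via condition~(ii) of Theorem~\ref{twistedn-simpl}, into precisely the insertion simplices required for a homotopy $\cj{x}\Left2{x}\Id_\locm$ in $\locm$, i.e., for $x\in N(\locm)$ in the sense of Definition~\ref{definorm}. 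Because $N(\locm)$ is characteristic and $\eta$ already takes values in $N(\locm)$ by Definition~\ref{rmktwistp}, the pair $\phi$ restricts to an $\loch$-twisting pair for $BN(\locm)$, and Theorem~\ref{twistedn-simpl} then identifies
$$
\locn_{\loce}(\locm)\;\cong\;BN(\locm)\times_\phi\loch.
$$
This simultaneously yields the partial subgroup structure and the extension of~(a) together with the explicit automorphism of~(b).

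For part~(c), I restrict this smaller extension along the inclusion $BN(\loch)\hookrightarrow\loch$. Since both base and fibre are then honest groups, the resulting twisted cartesian product is itself an honest group $G$, and my claim is $G=N(\loce)$. For $N(\loce)\subseteq G$: any $\zeta\in N(\loce)$ has $\cj{\zeta}$ preserving $\iota(\locm)=\ker\tau$ (because $\tau(\cj{\zeta}(\iota(y)))=\tau(\zeta)\cdot 1\cdot\tau(\zeta)^{-1}=1$ via the degenerate simplex $s_1[\tau(\zeta)|\tau(\zeta)^{-1}]$), so $\zeta\in\locn_{\loce}(\locm)_1$; and applying the simplex-wise surjective map $\tau$ to the insertion simplices of Lemma~\ref{N(M)conj}(\ref{N(M)conj1}) for $\loce$ produces the corresponding insertions in $\loch$, so $\tau(\zeta)\in N(\loch)$. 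For the reverse inclusion, given $(x,g)$ with $x\in N(\locm)$ and $g\in N(\loch)$, I will verify the insertion condition of Lemma~\ref{N(M)conj}(\ref{N(M)conj1}) in $\loce$ by unpacking via Theorem~\ref{twistedn-simpl}: the $\loch$-part is immediate from $g\in N(\loch)$, while the $\locm$-part, after absorbing the twisting factors $t$ and $\eta$ into iterated conjugations, reduces to a sequence of insertions of elements of $N(\locm)$ into simplices of $\locm$, which are available by iterated application of Lemma~\ref{N(M)conj}(\ref{N(M)conj1}) in $\locm$.

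\textbf{Main obstacle.} The delicate step is this last reduction: one must track carefully how each insertion simplex in $\loce$ transforms under the twisting pair into an insertion simplex in $\locm$, and verify that all of the inserted elements (the appropriate $t(-)$-images of $x$, $x^{-1}$, and of various values of $\eta$) fall within $N(\locm)$ so that iterated application of Lemma~\ref{N(M)conj}(\ref{N(M)conj1}) suffices. Everything else follows cleanly from successive applications of the structure theorem to subgroups that are genuine groups.
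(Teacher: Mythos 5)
Your proposal is correct and follows essentially the same route as the paper: present $\loce$ as $\locm\times_\phi\loch$ via Theorem~\ref{twistedn-simpl}, use that $N(\locm)$ is characteristic and that $\eta$ takes values in $N(\locm)$ to get (a) and the formula $\7c_{(x,g)}=\cj{x}\circ t(g)$ for (b), and verify the insertion criterion of Lemma~\ref{N(M)conj}(\ref{N(M)conj1}) in the twisted product for (c). The only differences are cosmetic --- you package (a) as the explicit identification $\locn_{\loce}(\locm)\cong BN(\locm)\times_\phi\loch$ and treat $(x,g)$ in one shot where the paper factors it as $(x,1)(1,g)$, and the "main obstacle" you flag is exactly the computation the paper carries out in its Steps 1 and 2.
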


\begin{proof}

According to Theorem~\ref{twistedn-simpl}, we may identify $\loce = \locm \times_{\phi} \loch$, with $\loch$-twisting pair $\phi = (t, \eta)$ for $\locm$. This way, we may use the usual notation and conventions for twisted cartesian products.

Notice that $N(\locm)$ is a characteristic subgroup of $\locm$ by Proposition \ref{Nloclgroup} - that is, every automorphism of $\locm$ preserves $N(\locm)$. Moreover, the function $\eta$ in the twisting pair $\phi$ takes values in $N(\locm)$. Hence, part (a) of the statement follows immediately.

To check part (b) of the statement, let $(x,g) \in \locn_{\loce}(\locm)_1$, and notice that $x \in N(\locm)$ by definition. As $x$ acts by conjugation on $\locm$ by Proposition \ref{Nloclgroup}, we may assume without loss of generality that $x = 1$: the conjugation action of $(x,g)$ is equal to the composition of the conjugation actions of $(x,1)$ and $(1,g)$.

Given $\7w = [y_1|\ldots|y_n] \in \locm$, we have to show that
$$
\5{\7w} = [(1,g)|(y_1,1)|(1,g)^{-1}|(1,g)|(y_2,1)|(1,g)^{-1}|\ldots|(1,g)|(y_n,1)|(1,g)^{-1}] \in \loce.
$$
This is a tedious computation combining conditions (i) and (ii) in Theorem \ref{twistedn-simpl} (a), as well as the inversion formula \eqref{cond4}, and the different properties listed in Lemmas \ref{ho.eq} and \ref{N(M)conj}. We leave the details to the reader.

To prove part (c) of the statement, we first show that $(1, g), (x, 1) \in N(\loce)$ for all $g \in N(\loch)$ and each $x \in N(\locm)$.  The proof of these two steps is based on 
Lemma~\ref{N(M)conj}.  It is also convenient to emphasize the following technical fact. 
Given $g \in \loch$, recall from \eqref{cond4} that $(1, g)^{-1} = (\eta(g^{-1}, g)^{-1}, g^{-1})$. Set for simplicity $\eta = \eta(g^{-1},g)^{-1} \in N(\locm)$ (cf.~\eqref{cond4}).  
With this notation, $\eta$ determines a homotopy 
$\Id_\locm   \Left2{\eta} t(g^{-1}) \circ t(g)$, and by Lemma \ref{ho.eq} (a) and (c), for any 
automorphism $f$ of $\locm$,  $f(\eta)$ defines a homotopy
$$
f \Left4{f(\eta)} f \circ t(g^{-1}) \circ t(g).
$$
Hence, given a simplex $\7x=[x_1|\dots|x_{k-1}|f(x_k)|\dots |f(x_n)] \in \locm$, the above homotopy 
shows the existence of another simplex 
\begin{equation}\label{tech2}
 [x_1|\dots|x_{k-1}| f(\eta) | f\circ t(g^{-1})\circ t(g) (x_k)|\dots | f\circ t(g^{-1})\circ t(g) (x_n)]
\end{equation}
(cf.~Lemma~\ref{homot=conj}(b), applied to $[f^{-1}(x_1)|\dots |f^{-1}(x_{k-1})|x_k|\dots\ x_n]$.)
\medskip

\textbf{Step 1. } For any $g\in N(\loch)$,  $(1, g) \in N(\loce)$. By Lemma~\ref{N(M)conj}\eqref{N(M)conj1} it is 
enough to show
that for an arbitrary simplex
$$
  \7w = [(y_1, h_1)|\ldots|(y_n, h_n)] \in \loce
$$ 
there are also simplices 
$$
\5{\7w}_k= [(y_1, h_1)|\dots (y_{k-1},h_{k-1})|(1, g)^{-1}|(1, g)|(y_k, h_k)| 
             \ldots |(y_n, h_n)] \in \loce\,,\quad k=1,\dots,n+1\,.
$$
Fix $k$, $1\leq k\leq n+1$. To prove the existence of the simplex $\5{\7w}_k$ we have to check conditions (i) and (ii) in Theorem \ref{twistedn-simpl} (a). Notice that condition (i) is immediate as $g \in N(\loch)$. Thus, $\5{\7w}_k \in \loce$ if and only if condition (ii) in Theorem \ref{twistedn-simpl} (a) (for $\5{\7w}_k$) is satisfied. Notice that we have $[y_1|t(h_1)(y_2)| \ldots |(t(h_1) \circ \ldots t(h_{n-1}))(y_n)] \in \locm_n$ by assumption, as $\7w \in \loce$.  Then, 
equation~\eqref{tech2} with $f= t(h_1)\circ \dots \circ t(h_{k-1})$ 
shows the existence of the simplex 
\begin{align*}
\7v_{k}= [y_1&|t(h_1)(y_2)|\ldots\; \ldots  |t(h_1) \circ \ldots \circ t(h_{k-1})(y_{k-1})  
                        |  t(h_1) \circ \ldots  \circ t(h_{k-1}) (\eta)   \\  
                                                 & | t(h_1) \circ \ldots \circ  t(h_{k-1}) \circ t(g^{-1})\circ t(g)(y_k)|
                                t(h_1) \circ \ldots t(h_{k-1}) \circ t(g^{-1})\circ t(g)\circ t(h_k)(y_{k+1})|\dots \\
           &  \qquad  \dots     |  t(h_1) \circ \ldots \circ  t(h_{k-1}) \circ t(g^{-1})\circ t(g)
           \circ t(h_k)\circ \dots \circ t(h_{n-1} )(y_n)]\in \locm\,.
\end{align*}
On the other hand, by condition (ii) in Theorem \ref{twistedn-simpl} (a) we know that $\5{\7w}_k \in \loce$ if and only if
\begin{align*}
\7u_{k}= [y_1&|t(h_1)(y_2)|\ldots\; \ldots  |t(h_1) \circ \ldots \circ t(h_{k-1})(y_{k-1})  \\ 
                      &|    t(h_1) \circ \ldots  \circ t(h_{k-1}) (\eta)|  
                        t(h_1) \circ \ldots  \circ t(h_{k-1}) \circ t(g^{-1})(1)\\  
                                                 & | t(h_1) \circ \ldots \circ  t(h_{k-1}) \circ t(g^{-1})\circ t(g)(y_k)|
                                t(h_1) \circ \ldots t(h_{k-1}) \circ t(g^{-1})\circ t(g)\circ t(h_k)(y_{k+1})|\dots \\
           &  \qquad  \dots     |  t(h_1) \circ \ldots \circ  t(h_{k-1}) \circ t(g^{-1})\circ t(g)
           \circ t(h_k)\circ \dots \circ t(h_{n-1} )(y_n)]\in\locm\,.
\end{align*}
Notice that $ t(h_1) \circ \ldots  \circ t(h_{k-1}) \circ t(g^{-1})(1)=1$ thus, 
$ \7v_{k}$ is a degeneracy of $\7u_{k}$, more precisely  $\7u_{k} = s_{k+1} (\7v_{k}) $. 
Hence, we have shown the existence of $\5{\7w}_k$.
\medskip

\textbf{Step 2. }  For any $x\in N(\locm)$, $(x,1)\in N(\loce)$. Again by Lemma~\ref{N(M)conj}\eqref{N(M)conj1},
we only need to show that 
for each $\7w =[(y_1,h_1)|\dots |(y_n, h_n)]\in\loce$ and each  $k$, $1\leq k\leq n+1$, there is a simplex
$$
\5{\7w}_k= [(y_1,h_1)|\dots      | (y_{k-1},h_{k-1})|(x,1)^{-1}| (x,1)  |(y_k, h_k)|\dots    |(y_n, h_n)]  \in \loce \,.
$$
Again, that amounts to showing that conditions (i) and (ii) in Theorem \ref{twistedn-simpl} (a) are satisfied for $\5{\7w}_k$. Condition (i) follows immediately, as the projection of $(x, 1)$ onto $\loch$ is the identity element.

As $\7w\in \loce$, we have a simplex
$[y_1|t(h_1)(y_2)| \ldots |(t(h_1) \circ \ldots t(h_{n-1}))(y_n)] \in \locm$ by condition (ii) in Theorem \ref{twistedn-simpl} (a). Moreover, $t(h_1)\circ\dots\circ t(h_{k-1})(x) \in N(\locm)$, and hence we have a simplex
\begin{align*}
 [y_1|t(h_1)(y_2)& | \ldots\;\dots | t(h_1) \circ \ldots \circ t(h_{k-2})(y_{k-1})  \\
            &  | t(h_1) \circ \ldots \circ t(h_{k-1})(x^{-1}) |        t(h_1) \circ \ldots \circ t(h_{k-1})(x)     \\
             &   | t(h_1) \circ \ldots\circ  t(h_{k-1})(y_k) | \dots  \;\dots     
              |  t(h_1)  \circ \ldots \circ t(h_{n-1})(y_n)] \in \locm
\end{align*}
which proves the existence of $\5{\7w}_k$.
\medskip

Steps 1 and 2 prove part (c) in the statement. On the one hand, for any $x \in N(\locm)$ and any $g \in N(\loch)$,  $(x,g)=(x,1)(1,g) \in N(\loce)$. On the other hand, if $(x,g)$ is an arbitrary element of $N(\loce)$,  then we also have $(x,g)(1,g^{-1})\in N(\loce)$, but $(x,g)(1,g^{-1}) = (x \eta(g,g^{-1}), 1) $. In particular $x \eta(g,g^{-1})\in N(\locm)$. Since, $ \eta(g,g^{-1})\in N(\locm)$,  it follows that $x\in N(\locm)$.
\end{proof}

\begin{rmk}\label{rmkconj}
Let $\loce = \locm \times_{\phi} \loch$ be a twisted cartesian product, and let $(x, g), (y, h) \in \loce$ be such that $\7u = [(x,g)|(y,h)|(x,g)^{-1}] \in \loce$. Recall the formulae \eqref{cond3} and \eqref{cond4} for the product and inversion in $\locm$ respectively. Then,
\begin{align*}
\Pi(&\7u) = (x, g) \cdot (y, h) \cdot \big(\eta(g^{-1},g)^{-1} \cdot t(g^{-1})(x^{-1}), g^{-1}\big) = \\
 & = \big(x \cdot t(g)(y) \cdot \eta(g,h), g \cdot h\big) \cdot \big(\eta(g^{-1},g)^{-1} \cdot t(g^{-1})(x^{-1}), g^{-1}\big) = \\
 & = \big(x \cdot t(g)(y) \cdot \eta(g, h) \cdot t(gh)(\eta(g^{-1},g)^{-1}) \cdot (t(gh) \circ t(g^{-1}))(x^{-1}) \cdot \eta(gh, g^{-1}), g \cdot h \cdot g^{-1}\big) = \\
 & = \big(x \cdot t(g)(y) \cdot \eta(g, h) \cdot \eta(ghg^{-1}, g) \cdot (\9{\eta(gh, g^{-1})^{-1}}(t(gh) \circ t(g^{-1}))(x^{-1})), g \cdot h \cdot g^{-1}\big) = \\
 & = \big(x \cdot t(g)(y) \cdot \eta(g, h) \cdot \eta(ghg^{-1}, g)^{-1} \cdot t(ghg^{-1})(x^{-1}), g \cdot h \cdot g^{-1}\big),
\end{align*}
where the equality between lines three and four follows from property (b) of twisting pairs (see Definition \ref{rmktwistp}), and the equality between lines four and five follows since $\eta(ghg^{-1},g)$ defines a homotopy $\big(t(ghg^{-1}) \Left8{\eta(ghg^{-1},g)} t(gh) \circ t(g^{-1})\big)$ (see Lemma \ref{tpair}). In particular, if $(y,h) \in \locm$, then $h = 1$ and the above computation simplifies to
$$
(x,g)\cdot (y,1) \cdot (x,g)^{-1} = (x \cdot t(g)(y) \cdot x^{-1}, 1) 
$$
by property (a) of twisting pairs (see Definition \ref{rmktwistp}). Notice that this also proves that $\locm$ is a partial normal subgroup of $\loce$, in the sense of \cite[Definition 3.4]{Chermak}.

\end{rmk}

\begin{rmk}\label{rmkconj2}
According to Theorem~\ref{twistedn-simpl}(b) a extension of partial groups 
$\locm \Right1{\iota} \loce \Right1{\tau} \loch$, is determined up to equivalence by a twisting pair $\phi=(t,\eta)$. 
Now, according to Lemma~\ref{NEL} and Remark~\ref{rmkconj} for each $g\in \loch_1$, $t(g)\in \Aut(\locm)$ can be 
realized as conjugation in $\loce$ by a preimage of $g$ in $N_\loce(\locm)$. Furthermore, for 
a different choice of twisting pair 
$(t',\eta')$, $t'$ would differ from $t$ by a conjugation automorphism $\7c_z\in \Aut(\locm)$ with 
$z\in N(\locm)$. In particular, the outer action 
$$\alpha\colon \loch\Right0{}B\Out(\locm)\,,$$ 
defined 
$\alpha(g)=[t(g)] \in \Out(\locm)$, for $g\in \loch_1$,  is uniquely determined by the equivalence 
class of the extension. We
will refer to it as the outer action induced by the extension. 
\candrop{By \cite[Theorem IV.5.6]{BGM}, homotopy classes of maps $B \Right0{} B\aut(F)$ classify fibre bundles $f \colon E \Right0{} B$ with fibre $F$ up to strong homotopy equivalence. Thus, every fibre bundle $f \colon E \Right0{} B$ with fibre $F$ naturally induces a map $B \Right0{} B\pi_0(\aut(F))$. If both $B = \loch$ and $F = \locm$ are partial groups, then by Corollary \ref{isoaut2} we deduce that every fibre bundle with base $\loch$ and fibre $\locm$ induces a homomorphism of partial groups
$$
\alpha \colon \loch \Right1{} B\Out(\locm),
$$
or, in terms of Definition \ref{defiaction}, an outer action of $\loch$ on $\locm$.} We will denote $\cale(\locm, \loch, \alpha)$ the set of equivalence classes of extensions of 
$\loch$ by $\locm$ with induced outer action $\alpha\colon\loch\Right0{}\Out(\locm)$.

\end{rmk}


After the structure theorem for extensions of partial groups has been established, the classification 
of extensions goes exactly as in the case of finite groups. We will state the result and give the necessary 
hints to follow the classical argument.

\begin{thm}\label{classext}

Let $\locm$ and $\loch$ be partial groups, and let $\alpha \colon \loch \to B\Out(\locm)$ 
be an outer action. Then, the following holds.
\begin{enumerate}[\rm (a)]

\item The set $\cale(\locm, \loch, \alpha)$ is nonempty if and only if a 
certain obstruction class $[\kappa] \in H^3(\loch; Z(\locm))$ vanishes.  

\item If the set $\cale(\locm, \loch, \alpha)$ of equivalence classes of extensions of 
$\loch$ by $\locm$ with outer action $\alpha$ is not empty, then  $H^2(\loch; Z(\locm))$ 
acts freely and transitively on this set. 

\end{enumerate}
\end{thm}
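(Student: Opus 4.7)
The plan is to translate the classical Eilenberg--MacLane obstruction theory for group extensions into the partial group setting, using the bijection between extensions and $\loch$-twisting pairs provided by Theorem~\ref{twistedn-simpl}. Throughout, cohomology of $\loch$ with coefficients in $Z(\locm)$ is understood as the simplicial cohomology of $\loch$ with the local system induced by $\alpha$, composed with the natural action of $\Out(\locm)$ on $Z(\locm)$ afforded by Proposition~\ref{Nloclgroup}.

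For part (a), I would first choose a set-theoretic lift $t\colon \loch_1 \to \Aut(\locm)$ of $\alpha$ on $1$-simplices, with $t(1)=\Id$. For each $2$-simplex $[g|h]\in\loch_2$, the automorphisms $t(g)\circ t(h)$ and $t(gh)$ differ by an inner automorphism, so Lemma~\ref{homot=conj} yields an element $\eta(g,h)\in N(\locm)$ implementing a homotopy between them; I would normalize so that $\eta(g,1)=\eta(1,g)=1$. Setting
\begin{equation*}
\kappa([g|h|k]) = \bigl(t(g)(\eta(h,k))\cdot \eta(g,hk)\bigr)\cdot\bigl(\eta(g,h)\cdot\eta(gh,k)\bigr)^{-1} \in N(\locm),
\end{equation*}
I would verify, in order: that $\kappa$ actually takes values in $Z(\locm)$ (both bracketed elements witness homotopies between the same pair of automorphisms $t(g)\circ t(h)\circ t(k)$ and $t(ghk)$, so their ratio lies in the kernel of the conjugation homomorphism $N(\locm)\to\Aut(\locm)$); that $\kappa$ is a $3$-cocycle for the simplicial differential on $C^*(\loch;Z(\locm))$; and that its cohomology class is independent of the choices of $t$ and $\eta$. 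If $[\kappa]=0$, write $\kappa=\delta\zeta$ for some $2$-cochain $\zeta\colon\loch_2\to Z(\locm)$, and replace $\eta$ by $\eta\cdot\zeta$ to obtain a bona fide twisting pair; Theorem~\ref{twistedn-simpl}(a) then builds the desired extension. Conversely, any extension gives rise to a twisting pair with $\kappa\equiv 1$, so vanishing is also necessary.

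For part (b), fix any twisting pair $(t,\eta)$ representing a class in $\cale(\locm,\loch,\alpha)$. An arbitrary $2$-cocycle $\beta\in Z^2(\loch;Z(\locm))$ produces a new twisting pair $(t,\beta\cdot\eta)$ (the cocycle condition for $\beta$ is exactly what is needed to preserve condition (c) of Definition~\ref{rmktwistp}), and this defines an $H^2(\loch;Z(\locm))$-action on $\cale(\locm,\loch,\alpha)$. To establish freeness and transitivity I would argue that any two twisting pairs $(t_1,\eta_1)$ and $(t_2,\eta_2)$ inducing the same outer action are related by a $0$-cochain $\gamma\colon\loch_1\to N(\locm)$ satisfying $t_2(g)=\cj{\gamma(g)}\circ t_1(g)$, and that the ratio of $\eta_2$ and the appropriately $\gamma$-twisted $\eta_1$ is a $2$-cochain with values in $Z(\locm)$ whose cohomology class captures the difference between the two pairs. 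Checking that this class vanishes exactly when the corresponding extensions are equivalent reduces, via the explicit form of an equivalence $\varphi[(x,g)]=[(x\cdot\gamma(g),g)]$, to the coboundary identity $\beta=\delta\gamma$.

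The main obstacle will be bookkeeping: unlike the classical group case, each time one forms a product $x_1\cdot x_2\cdots x_n$ inside $\locm$ or $\loch$, the corresponding simplex $[x_1|x_2|\cdots|x_n]$ must actually exist, and the same applies to every intermediate simplex arising in cocycle manipulations. The foundational results of Section~\ref{Spg}---in particular Lemma~\ref{N(M)conj} and Lemma~\ref{ho.eq}---have been crafted precisely to handle this issue, ensuring that the simplices appearing throughout the obstruction-theoretic computations are well-defined because the relevant elements lie in $N(\locm)$ (via $\eta$) or are automorphism images thereof. Once this is confirmed case by case, the classical proof transplants essentially verbatim.
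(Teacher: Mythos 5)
Your proposal follows essentially the same route as the paper's own argument: both reduce the problem to $\loch$-twisting pairs via Theorem~\ref{twistedn-simpl}, measure the failure of the cocycle condition (c) of Definition~\ref{rmktwistp} by a central-valued $3$-cochain $\kappa$ (yours is the inverse of the paper's, which is immaterial), and let $H^2(\loch;Z(\locm))$ act by multiplying $\eta$ by a $2$-cocycle, with freeness and transitivity established by the classical comparison of two twisting pairs. The only quibble is that your comparison datum $\gamma\colon\loch_1\to N(\locm)$ is a $1$-cochain rather than a $0$-cochain; otherwise the argument matches the paper's outline, and your closing remark about verifying the existence of all intermediate simplices via Lemmas~\ref{N(M)conj} and~\ref{ho.eq} is exactly the bookkeeping the paper elides.
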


\begin{proof} This result follows easily from the classificacion theorem for fibre 
bundles \cite[5.6]{BGM}. The set $\cale(\locm, \loch, \alpha)$ 
of equivalence classes of extensions of $\loch$ by $\locm$ is in bijection with 
the set $[\loch, B\aut(\locm)]$ of homotopy classes of maps from $\loch$ to 
the classifying space $B\aut(\locm)$. 
Among those, the ones with outer action $\alpha$, correspond to the classes of 
maps $\loch\Right0{} B\aut(\locm)$ lifting the map $\alpha\colon \loch\Right0{}B\Out(\locm)$. 
The result follows from the obstructions to existence and classification of such liftings. 

%
%
%
%
%
%
%
%
%

However, the description of extensions provided by Theorem~\ref{twistedn-simpl}, allows an alternative 
proof of the theorem along the lines of classification theorem for extension of groups (cf.~\cite{McL,Br}). 
We give here the general outline, but omit the lengthy calculations. 

\begin{enumerate}[(1)]

\item Fix the outer action $\alpha \colon \loch \to B\Out(\locm)$ and choose a representative $t(g) \in \Aut(\locm)$ for each $[g] \in \loch_1$, with $t(1) = \Id$. This defines a function $t \colon \loch_1 \to \Aut(\locm)$.

\item Given a 2-simplex $[g|h] \in \loch_2$, the composition $t(g) \circ t(h) $ differs 
from $t(gh)$ by an inner automorphism.
Hence, there is a choice of elements $\eta(g,h)\in N(\locm)$  satisfying
$$
t(g)\Left4{\eta(g,h)}   t(gh)t(h)^{-1}\,.
$$
Since $t(1) = \Id$, one may choose $\eta(g,1) = 1 = \eta(1,g)$ for all $[g] \in \loch_1$. 
This defines a function $\eta \colon \loch_2 \to N(\locm)$. 
Note that the pair $\phi = (t, \eta)$ already satisfies conditions (a) and (b) in Definition \ref{rmktwistp}. 
The failure of condition (c) is measured by 
$$ 
    \kappa([g|h|k])\defeq  \eta(g,h) \cdot \eta(gh,k)\cdot \eta(g, hk)^{-1}\cdot  t(g)(\eta(h,k))^{-1}\,.
$$
Notice that $\kappa([g|h|k])$ defines a self-homotopy of the identity, hence $\kappa([g|h|k])\in Z(\locm)$, so 
$\kappa\colon \loch_3\Right0{} Z(\locm)$ represents a class $[\kappa]\in H^3(\loch;Z(\locm))$.

Now following the classical argument, if $\kappa$ is a boundary, we can modify $\eta$ so that it also 
satisfies condition (c) in Definition \ref{rmktwistp}, hence $(t,\eta)$ is a well defined twisting pair 
providing an extension of $\loch$ by $\locm$. Conversely, if such an extension exists, 
then in particular there is a twisting pair by Theorem~\ref{twistedn-simpl}(b). 
In particular, 
as condition (c) in Definition~\ref{rmktwistp}  is already satisfied, this means that 
$\kappa$ already represents the trivial class in $H^3(\loch; Z(\locm))$.

\item 
Assume that the set  of equivalence classes of extensions of $\loch$ by $\locm$
 with outer action $\alpha$ is not empty. Given an extension defined by a twisting pair
 $(t, \eta)$ and a class $[v]\in H^2(\loch;Z(\locm))$, represented by 
 $v\colon \loch_2\Right0{}Z(\locm)$, with $v(g,1)=v(1,g)=1$, 
 we can form a new twisting pair $(t, \sigma)$,  with 
 $$ \sigma([g|h]) = v(g,h)\cdot \eta(g,h)$$
and $(t,\sigma)$ is a new well defined twisting pair, providing a new extension. 
This construction defines an action of $ H^2(\loch;Z(\locm))$ on the set of 
equivalence classes of extensions of $\loch$ by $\locm$
 with outer action $\alpha$. It turns out that this action is free and transitive. The proof follows 
 the classical argument.\hfill\qed
\end{enumerate}
\renewcommand{\qed}{}
\end{proof}


\section{Regular split extensions}\label{regsplit}

In this section we specialize our study to split extensions. 
When studying group extensions, a splitting always gives rise to a semidirect product. 
The subtleties involved in the study of partial groups - and their extensions - imply that 
this is no longer true in this case. Thus, in this section we introduce a particular type 
of sections, \emph{regular sections}, and we present a classification of 
\emph{regular split extensions} of partial groups, which naturally generalizes 
the classification of split group extensions.

\begin{defi}\label{defireg}

Let $\locm \Right0{} \loce \Right1{\tau} \loch$ be an extension of partial groups, and let $\locn_{\loce}(\locm) \leq \loce$ be the normalizer partial subgroup of $\locm$ in $\loce$. A section $\sigma\colon \loch \Right0{} \loce$ is \emph{regular} if it factors through the  $\locn_{\loce}(\locm)$; that is, if $\sigma(\loch) \subseteq \locn_{\loce}(\locm)$. The extension $\tau$ is \emph{regular split} if it admits a regular section.

\end{defi}

\begin{expl}\label{semidir}

The canonical example of a regular split extension is given by the 
\emph{semidirect product} of partial groups. Let $\locm$ and $\loch$ be partial groups, and let $\rho \colon \loch \to B\Aut(\locm)$ be a homomorphism of partial groups. Then, $\rho$ determines a tautological twisting pair $\phi_{\rho} = (\rho, \underline{1})$, where $\underline{1}$ stands here for the trivial cocycle $\underline{1}(g, h) = 1 \in \locm$, for all $[g|h] \in \loch_2$. The semidirect product is then defined as
$$
\locm \rtimes_{\rho} \loch \defeq \locm \times_{\phi_{\rho}} \loch.
$$
According to Theorem~\ref{twistedn-simpl}, $\locm \rtimes_{\rho} \loch$, can be described as the partial group with $n$-simplices $[(x_1,g_1)|\ldots|(x_n,g_n)]$ such that $[g_1|\ldots|g_n] \in \loch_n$ and such that $[x_1|\9{g_1}x_2|\9{g_1g_2}x_3|\ldots|\9{g_1\cdots g_{n-1}}x_n] \in \locm_n$, where $\9{g}x = \rho(g)(x)$ for each $x \in \locm_1$ and each $g \in G$, and multiplication 
determined by the formula 
$$(x_1,g_1)\cdot (x_2,g_2) = (x_1 \cdot \9{g_1}x_2, g_1 \cdot g_2)\,.
$$
Note that the notation $\9g x = \rho(g)(x)$ is coherent with the notation introduced in  
\ref{not-conj} for conjugations in a partial group.

The vertex of $\locm$ is fixed by the action of $\loch$. This provides a canonical  section 
\begin{equation}\label{cansec}
\sigma_0\colon [g_1|\ldots|g_n] \in \loch_n \longmapsto  [(1,g_1)|\ldots|(1,g_n)] \in  \locm \rtimes_{\rho} \loch
\end{equation}
Furthermore, the section $\sigma_0$ described above is regular in the sense of Definition \ref{defireg}. When there is no place for confusion we   simply use the notation $\locm \rtimes \loch$, without further reference to the action $\rho$ of $\loch$ on $\locm$.

\end{expl}

The next result shows that all  regular split fibre bundles are actually isomorphic to semidirect products.

\begin{lmm}\label{rsplitaction}

Let $\locm \to \loce \to \loch$ be a regular split extension of partial groups. Then, a regular section $\sigma$ determines a homomorphism of partial groups $\rho \colon \loch \to B\Aut(\locm)$ and an isomorphism of regular split extensions
\begin{equation}\label{splitext}
\vcenter{
 \xymatrix{  \locm \ar@{=}[r]  \ar[d]& \locm \ar[d]\\
      \locm \rtimes_\rho \loch \ar[r]^-{\varphi}  \ar[d]  & \loce  \ar[d]\\
         \loch \ar@/^2ex/[u]^{\sigma_0} \ar@{=}[r]  & \loch  \ar@/^2ex/[u]^{\sigma}
}
}
\end{equation}
where $\sigma_0\colon \loch \Right0{} \locm \rtimes_\rho \loch $ is the canonical section of the semidirect product.

\end{lmm}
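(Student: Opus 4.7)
The plan is to use Theorem~\ref{twistedn-simpl}(b) to identify $\loce$ with a twisted Cartesian product $\locm \times_\phi \loch$ for some twisting pair $\phi = (t, \eta)$. By Lemma~\ref{NEL}(a), the restricted extension $BN(\locm) \to \locn_\loce(\locm) \to \loch$ is itself a twisted Cartesian product, so regularity of $\sigma$ forces $\sigma(g) = (\xi(g), g)$ for a function $\xi \colon \loch_1 \to N(\locm)$ with $\xi(1) = 1$. The fact that $\sigma$ is a simplicial section (so $\sigma(g_1 g_2) = \sigma(g_1)\cdot\sigma(g_2)$ whenever $[g_1|g_2] \in \loch_2$), combined with the product formula~\eqref{cond3}, translates into the cocycle relation
$$
\xi(g_1 g_2) = \xi(g_1) \cdot t(g_1)(\xi(g_2)) \cdot \eta(g_1, g_2),
$$
which will drive the rest of the argument.

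Next I would define $\rho(g) := \7c_{\sigma(g)} \in \Aut(\locm)$ using Lemma~\ref{NEL}(b), and observe via Remark~\ref{rmkconj} that $\rho(g)(x) = \xi(g) \cdot t(g)(x) \cdot \xi(g)^{-1}$. A direct calculation starting from $\rho(g_1)\rho(g_2)(x) = \xi(g_1)\,t(g_1)(\xi(g_2)\,t(g_2)(x)\,\xi(g_2)^{-1})\,\xi(g_1)^{-1}$, and using the homotopy $t(g_1)\circ t(g_2) \Left2{\eta(g_1,g_2)} t(g_1 g_2)$ together with the cocycle relation above, produces $\rho(g_1)\rho(g_2) = \rho(g_1 g_2)$. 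Since $\rho(1) = \Id$ and $B\Aut(\locm)$ is the bar construction of a group, this partial-homomorphism data determines a unique simplicial map $\rho \colon \loch \to B\Aut(\locm)$ by Lemma~\ref{maps}.

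With $\rho$ in hand, I construct $\varphi \colon \locm \rtimes_\rho \loch \to \loce$ on $1$-simplices by $\varphi(x, g) := (x \cdot \xi(g), g)$, with proposed inverse $(y, g) \mapsto (y \cdot \xi(g)^{-1}, g)$. The central step is showing $\varphi$ extends to a simplicial map: an $n$-simplex $[(x_1, g_1)|\ldots|(x_n, g_n)]$ of $\locm \rtimes_\rho \loch$ must be sent to the simplex $[(x_1\xi(g_1), g_1)|\ldots|(x_n\xi(g_n), g_n)]$, whose membership in $\loce_n$ is controlled by conditions (i) and (ii) of Theorem~\ref{twistedn-simpl}(a). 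Substituting $\rho(g) = \xi(g)\,t(g)(-)\,\xi(g)^{-1}$ into condition (ii) for the semidirect product, inserting strategically placed pairs $\xi(g_i) \cdot \xi(g_i)^{-1}$ inside the resulting simplex of $\locm$ (which is allowed by Lemma~\ref{N(M)conj}(\ref{N(M)conj1}) since each $\xi(g_i) \in N(\locm)$), and then regrouping using the cocycle relation to produce the $\eta$-factors, yields exactly the simplex of $\locm_n$ required by condition (ii) for $\loce$. Bijectivity of $\varphi$ is visible from the explicit inverse; compatibility with products and inversions is a direct calculation using~\eqref{cond3} and~\eqref{cond4}; and the diagram~\eqref{splitext} commutes because $\varphi(1, g) = (\xi(g), g) = \sigma(g)$ and $\varphi(x, 1) = (x, 1)$.

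The main obstacle I anticipate is precisely this combinatorial verification that $\varphi$ is simplicially well defined on higher simplices. It is essentially the statement that the two twisting pairs $(\rho, \underline{1})$ and $(t, \eta)$ differ by the ``coboundary'' $\xi$, and unwinding this requires carefully tracking how the $\rho$-twisting in the semidirect product converts, under insertion of $\xi(g_i)\xi(g_i)^{-1}$ factors, into the $t$-twisting and $\eta$-cocycle prescribed by $\phi$. The normalizer closure properties recorded in Lemma~\ref{N(M)conj} are what guarantee that the intermediate $\locm$-simplices needed at each step actually exist.
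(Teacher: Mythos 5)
Your proposal follows essentially the same route as the paper's proof: identify $\loce$ with $\locm\times_\phi\loch$, write the regular section as $\sigma(g)=(\theta(g),g)$ with $\theta(g)\in N(\locm)$, extract the coboundary relation $\eta(g,h)=t(g)(\theta(h)^{-1})\cdot\theta(g)^{-1}\cdot\theta(gh)$, set $\rho(g)=\7c_{\theta(g)}\circ t(g)$, define $\varphi(x,g)=(x\cdot\theta(g),g)$, and verify simpliciality on higher simplices by inserting $\theta(g_i)\theta(g_i)^{-1}$ factors via Lemma~\ref{N(M)conj} and regrouping with the cocycle relation. The argument is correct and the minor variations (defining $\rho$ via conjugation by $\sigma(g)$ in $\loce$ through Remark~\ref{rmkconj}, and exhibiting an explicit inverse for $\varphi$) do not change the substance.
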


\begin{proof}

According to Theorem~\ref{twistedn-simpl}, we can assume that $\loce = \locm\times_\phi \loch$ for a $\loch$-twisting pair $\phi=(t,\eta)$ for $\locm$. Consider the induced extension $BN(\locm) \to \locn_{\loce}(\locm) \to \loch$ from Lemma \ref{NEL} (a). Since the extension is regular split, there is a section $\sigma \colon \loch \to \locn_{\loce}(\locm)$. Thus, for each $g \in \loch_1$ we can express
$$
\sigma(g) = (\theta(g), g),
$$
where $\theta(g) \in N(\locm)$. Furthermore, since $\sigma \colon \loch \to \locn_{\loce}(\locm)$ is a homomorphism of partial groups, for all $[g|h] \in \loch_2$ we have
$$
(\theta(gh), gh) = \sigma(gh) = \sigma(g)\sigma(h)  = (\theta(g) \cdot t(g)(\theta(h)) \cdot \eta(g,h), gh)\,.
$$
As a consequence, we deduce that
\begin{equation}\label{trhob}
\eta(g,h) = t(g)(\theta(h)^{-1}) \cdot \theta(g)^{-1} \cdot \theta(gh)\,.
\end{equation}

Let $g \in \loch_1$, and let $\sigma(g) = (\theta(g), g)$ as above. As $\theta(g) \in N(\locm)$, we know that conjugation by $\theta(g)$ defines an automorphism of $\locm$ by Proposition \ref{Nloclgroup}, and we may define
\begin{equation}\label{trhoa}
\rho(g) = \7c_{\theta(g)} \circ t(g) \in \Aut(\locm)\,.
\end{equation}
We have to check that this defines a homomorphism of partial groups. Assuming that, notice that $\rho$ clearly lifts the outer action of $\loch$ on $\locm$ given by the $\loch$-twisting pair $\phi = (t, \eta)$. Given $[g|h] \in \loch_2$, it is enough to show that $\rho(g) \circ \rho(h) = \rho(gh)$:
\begin{align*}
\rho(g) \circ \rho(h) & = \cj{\theta(g)} \circ t(g) \circ \cj{\theta(h)} \circ t(h) = \\
 & = \cj{\theta(g)} \circ \cj{t(g)(\theta(h))} \circ t(g) \circ t(h) = \\
 & = \cj{\theta(g)} \circ \cj{t(g)(\theta(h))} \circ \cj{\eta(g,h)} \circ t(gh) = \\
 & = \cj{\theta(gh)} \circ t(gh) = \rho(gh).
\end{align*}
Here, the equality between lines two and three follows by condition (a) in Definition \ref{rmktwistp} for twisting pairs, and the equality between lines three and four follows from equation \eqref{trhob}, as $\theta(gh) = \theta(g) \cdot t(g)(\theta(h)) \cdot \eta(g, h)$.

Equations \eqref{trhob} and \eqref{trhoa} in particular imply that the section $\sigma$ 
determines the $\loch$-twisting pair $\phi$, and this makes the statement of the Lemma feasible. 
Notice, that there is a unique possible definition of a map 
$\varphi \colon \locm \rtimes_{\rho} \loch \to \loce$ such that the diagram \eqref{splitext} commutes: 
$$
\varphi(x,g) = (x \cdot \theta(g), g).
$$
We   first show that this definition extends to a well defined simplicial map. Unlike the classical case of group extensions, the main problem now is to show that this is well defined, namely that given an arbitrary simplex $\7w = [(x_1,g_1)| (x_2, g_2)|\ldots | (x_n, g_n)] 
\in \locm\rtimes_\rho \loch$, $x_i\in \locm$, $g_i\in \loch_1$, the following is a well defined simplex in $\loce$:
$$
\7w' = [(x_1\theta(g_1), g_1) | (x_2 \theta(g_2), g_2)| \ldots | (x_n  \theta(g_n), g_n)] \in \loce.
$$

We have to check conditions (i) and (ii) in Theorem \ref{twistedn-simpl} (a) for $\7w'$. Notice that condition (i) follows immediately, as it is already satisfied for $\7w$, and we have to show that
\begin{equation}\label{bla0}
[x_1 \theta(g_1)|t(g_1)(x_2 \theta(g_2))|(t(g_1) \circ t(g_2))(x_3 \theta(g_3))|\ldots |(t(g_1) \circ \ldots \circ t(g_{n-1}))(x_n \theta(g_n))] \in \locm.
\end{equation}
For $1 \leq i \leq j \leq n$, set $g_{i,j} = g_i \cdot \ldots \cdot g_j$. Condition (ii) in Theorem \ref{twistedn-simpl} (a) for $\7w$ yields
$$
[x_1|\rho(g_1)(x_2)|\rho(g_{1,2})(x_3)|\ldots|\rho(g_{1,n-1})(x_n)] \in \locm\,,
$$
and since $\rho(g) = \7c_{\theta(g)}\circ t(g)$, we have 
\begin{equation}\label{bla1}
\begin{split}
[x_1|\theta(g_1) \cdot t(g_1)(x_2) \cdot \theta(g_1)^{-1}| & \theta(g_{1,2}) \cdot t(g_{1,2})(x_3) \cdot \theta(g_{1,2})^{-1}|\ldots \\
& \ldots|\theta(g_{1,n-1}) \cdot t(g_{1,n-1})(x_n) \cdot 
\theta(g_{1,n-1})^{-1}] \in \locm.
\end{split}
\end{equation}
For each $[g|h] \in \loch_2$ the element $\eta(g,h)$ determines a homotopy $t(g) \circ t(h) \Left2{\eta(g,h)} t(gh)$ (this is an immediate consequence of condition (b) in Definition \ref{rmktwistp} and Lemma \ref{ho.eq} (b)). Thus, for each $1 \leq i \leq n$ we have
\begin{equation}\label{bla2}
t(g_{1,i}) = \cj{\eta(g_{1,i-1},g_i)^{-1}} \circ \cj{\eta(g_{1, i-2}, g_{i-1})^{-1}} \circ \ldots \circ \cj{\eta(g_1, g_2)^{-1}} \circ t(g_1) \circ \ldots \circ t(g_i).
\end{equation}
Successive applications of the above formula to \eqref{bla1} imply that $\7u = [y_1|\ldots |y_n] \in \locm$, where $y_1 = x_1$, $y_2 = \theta(g_1) \cdot t(g_1)(x_2) \cdot \theta(g_1)^{-1}$, and
\begin{align*}
y_j = \theta(g_{1, j-1}) \cdot & \eta(g_{1, j-2}, g_{j-1})^{-1} \cdot  \ldots \cdot \eta(g_1, g_2)^{-1} \cdot \\
 & \cdot (t(g_1) \circ \ldots \circ t(g_{j-1}))(x_j) \cdot \eta(g_1, g_2) \cdot \ldots \cdot \eta(g_{1,j-2}, g_{j-1}) \cdot \theta(g_{1, j-1})^{-1}
\end{align*}
for $j = 3, \ldots, n$.

Recall that all the elements of the form $\theta(g)$ or $\eta(g, h)$ belong to $N(\locm)$. Applying Lemma \ref{N(M)conj} (a) to $\7u$ above (and the appropriate products), we obtain a simplex $\7v = [z_1|\ldots|z_n] \in \locm$, where $z_1 = x_1 \cdot \theta(g_1)$, $z_2 = t(g_1)(x_2) \cdot \theta(g_1)^{-1} \cdot \theta(g_{1,2}) \cdot \eta(g_1, g_2)^{-1}$, and
\begin{align*}
z_j = (t(g_1) \circ & \ldots \circ t(g_{j-1}))(x_j) \cdot \eta(g_1, g_2) \cdot \eta(g_{1,2}, g_3) \cdot \ldots \cdot \eta(g_{1, j-2}, g_{j-1}) \cdot \\
 & \cdot \theta(g_{1, j-1})^{-1} \cdot \theta(g_{1, j}) \cdot \eta(g_{1,j-1}, g_j)^{-1} \cdot \eta(g_{1, j-2}, g_{j-1})^{-1} \cdot \ldots \cdot \eta(g_1, g_2)^{-1}
\end{align*}
for $j = 3, \ldots, n$. In the particular case of $j = n$, notice that we are adding some terms on the right of $z_n$, which were not present in $y_n$. These extra terms are all elements of $N(\locm)$ and thus this is justified. To finish the proof of \eqref{bla0}, note that
$$
z_j = (t(g_1) \circ \ldots t(g_{j-1}))(x_j \theta(g_j))
$$
for each $j = 1, \ldots, n$ (with $z_1 = x_1 \theta(g_1)$). Indeed, to show the above it is enough to apply (as many times as necessary) the formulas \eqref{trhob} and \eqref{bla2} to  $z_j$.

This shows that $[(x_1\theta(g_1), g_1) | (x_2 \theta(g_2), g_2)| \ldots | (x_n  \theta(g_n), g_n)] \in \loce$. To finish the proof, we have to check that $\varphi$ defined above is a simplicial map, and an isomorphism of partial groups. To show that it is a simplicial map, it is enough to show  that $\varphi(\Pi[(x,g)|(y,h)]) = \Pi(\varphi[(x,g)|(y,h)])$. We have
 $$\varphi(\Pi[(x,g)|(y,h)])  = \varphi([x \cdot  \rho(g)(y), gh]) = [x\cdot  \rho(g)(y)\cdot \theta(gh), g\cdot h]$$
 and 
\begin{align*}
 \Pi(\varphi[(x,g)|(y,h)]) &= \Pi[(x\cdot \theta(g),g)| (y\cdot \theta(h),h)] \\
                                       &   = [(x\cdot \theta(g)\cdot  t(g)(y)\cdot t(g)(\theta(h))\cdot \eta(g,h), g\cdot h)]  \\
     & = [(x\cdot \rho(g)(y)\cdot \theta(g)\cdot t(g)(\theta(h))\cdot \eta(g,h), g\cdot h)] \\
& = [(x\cdot \rho(g)(y)\cdot \theta(gh), g \cdot h)]  
\end{align*}
where the last two equalities follow by (\ref{trhoa}) and (\ref{trhob}). Finally, notice that by definition of $\varphi$ it restricts to the identity on $\locm$ and also induces the identity on $\loch$. It follows that $\varphi$ itself is a bijection. 
\end{proof}

Notice that, conversely, given a homomorphism $\loch \to B\Aut(\locm)$, and a fibrewise isomorphism $\varphi\colon \locm\rtimes_\rho \loch \Right0{} \loce$, we can define the section $\sigma = \varphi\circ \sigma_0$, with image in $\locn_{\loce}(\locm)$.


\subsection{Classification of sections}

We present a classification of sections of regular split extensions, based on an adapted version of non-abelian cohomology of groups (cf.~\cite{Ser}). 
%
%
%
%
%
%
%

Fix a regular split extension
$\locm \Right2{\iota} \loce \Right2{\tau} \loch$. We   denote by 
$\Gamma(\tau)$  the set of all  sections of $\tau$. 
Two sections $\sigma, \sigma' \in \Gamma(\tau)$ are \emph{vertically homotopic} 
if there is a homotopy $\sigma \Left1{j(y)}\sigma'$, with $y\in \locm_1$.  
As in the classical case of Hurewicz fibrations, two sections that are homotopic are indeed 
vertically homotopic. This is proved in Lemma~\ref{equivsec-b}, below. 
But there is an additional problem. 
Partial groups are simplicial sets that fail to be Kan complexes, unless they are actual groups. 
In this situation homotopy does not generally define an equivalence relation. 
In particular, homotopy or vertical homotopy as described above,  is reflexive and symmetric, 
but it is not generally transitive. We are therefore forced to take the equivalence relation
that it generates. 


\begin{defi}\label{equivsec1} 
Let $\locm \Right0{\iota} \loce \Right0{\tau} \loch$ be an extension of partial groups. We say that two sections $\sigma,\sigma'\in\Gamma(\tau)$ are equivalent,
$\sigma\approx\sigma'$, if there is a string of sections and homotopies
 $$\sigma_0 \Left1{x_1} \sigma_1 \Left1{x_2} \ldots \Left1{x_n} \sigma_n$$
 with $x_i\in \loce_1$, $ i=0,\dots,n$.  We define $\frakx(\tau)$ as the set of equivalence classes $\Gamma(\tau)/{\approx}$.
\end{defi}

\begin{lmm}\label{EconjLconj}

Let $\locm \Right1{\iota} \loce \Right1{\tau} \loch$ be an extension, and let $\sigma_0$ be a section. 
Assume in addition that there exist homomorphisms of partial groups 
$\sigma_i \colon \loch \to \loce$ and elements $x_i \in \loce_1$, for $i = 1, 2$, such that 
$\sigma_0 \Left1{x_1} \sigma_1 \Left1{x_2} \sigma_2$. Given an element $g \in N(\loch)$, 
set $\sigma'_i = \sigma_i \circ \7c_g$, for $i = 1, 2$. Then, there are homotopies
$$
\sigma_0 \Left2{x_1'} \sigma_1' \Left2{x_2'} \sigma_2,
$$
where $x_1' = x_1 \cdot \sigma_1(g)$ and $x_2' = \sigma_1(g^{-1}) \cdot x_2$. 
In particular, if $g = \tau(x_1)^{-1}$, then $\sigma_1'$ is a section of $\tau$.
\end{lmm}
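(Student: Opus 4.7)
The plan is to apply Lemma~\ref{ho.eq}(d) twice, once to build each of the two new homotopies, using the self-homotopy between $1_{\loch}$ and $\7c_g$ that is automatically available because $g\in N(\loch)$. Concretely, Definition~\ref{definorm} supplies $\7c_g \Left2{g} 1_{\loch}$, and Lemma~\ref{ho.eq}(a) inverts it to $1_{\loch} \Left2{g^{-1}} \7c_g$. Both of these are homotopies of maps $\loch \to \loch$, which is precisely what is needed to ``twist'' $\sigma_i$ into $\sigma_i' = \sigma_i \circ \7c_g$.

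For the first new homotopy $\sigma_0 \Left2{x_1'} \sigma_1'$, I apply Lemma~\ref{ho.eq}(d) to the outer homotopy $\sigma_0 \Left2{x_1} \sigma_1$ of maps $\loch \to \loce$ and the inner homotopy $1_{\loch} \Left2{g^{-1}} \7c_g$ of maps $\loch \to \loch$. Part (d) yields a homotopy from $\sigma_1 \circ \7c_g = \sigma_1'$ to $\sigma_0 \circ 1_{\loch} = \sigma_0$, whose determining $1$-simplex is the product stipulated by the lemma; using the identity $x_1 \cdot \sigma_1(h) = \sigma_0(h) \cdot x_1$ from Lemma~\ref{homot=conj}(a) one rewrites this element in the form $x_1'$ in the statement. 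The second homotopy $\sigma_1' \Left2{x_2'} \sigma_2$ is produced symmetrically by applying Lemma~\ref{ho.eq}(d) to $\sigma_1 \Left2{x_2} \sigma_2$ and $\7c_g \Left2{g} 1_{\loch}$.

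For the ``in particular'' clause, I apply $\tau$ to the identity $x_1 \cdot \sigma_1(h) = \sigma_0(h) \cdot x_1$. Since $\tau \circ \sigma_0 = \Id_{\loch}$, this yields $\tau(x_1) \cdot \tau(\sigma_1(h)) = h \cdot \tau(x_1)$ for every $h \in \loch_1$, so $\tau \circ \sigma_1$ coincides with the conjugation automorphism $\cj{\tau(x_1)^{-1}}$ of $\loch$. In particular $\tau(x_1)\in N(\loch)$, so for $g$ the inverse of $\tau(x_1)$ in the relevant sense the map $\7c_g$ is inverse to $\tau\circ\sigma_1$, and hence $\tau \circ \sigma_1' = (\tau \circ \sigma_1) \circ \7c_g = \Id_{\loch}$, showing that $\sigma_1'$ is a section of $\tau$.

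The main obstacle is purely bookkeeping: one must keep straight the direction of each homotopy and which of the two equivalent expressions $\eta \cdot g(\nu) = f(\nu) \cdot \eta$ in part (d) to use, in order to match the specific formulas $x_1' = x_1 \cdot \sigma_1(g)$ and $x_2' = \sigma_1(g^{-1}) \cdot x_2$ stated in the lemma. Beyond this the argument is mechanical.
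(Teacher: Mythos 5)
Your proof takes exactly the paper's route: both arguments apply Lemma~\ref{ho.eq}(d) twice, feeding in the tautological homotopy between $\Id_\loch$ and $\7c_g$ that $g\in N(\loch)$ provides, and both deduce the final clause by pushing the first homotopy forward along $\tau$ via Lemma~\ref{ho.eq}(c) to identify $\tau\circ\sigma_1$ with a conjugation of $\loch$. One warning on the bookkeeping you flag at the end: with the orientation you take from Definition~\ref{definorm}, namely $\7c_g\Left1{g}\Id_\loch$ (so $\7c_g(h)=g\cdot h\cdot g^{-1}$), Lemma~\ref{ho.eq}(d) outputs the determining elements $x_1\cdot\sigma_1(g^{-1})$ and $\sigma_1(g)\cdot x_2$, and the intertwining relation $x_1\cdot\sigma_1(h)=\sigma_0(h)\cdot x_1$ cannot turn these into the stated $x_1\cdot\sigma_1(g)$ and $\sigma_1(g^{-1})\cdot x_2$: it moves a factor from one side of $x_1$ to the other but does not replace $g$ by $g^{-1}$. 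The formulas as printed (and the choice $g=\tau(x_1)^{-1}$ in the last clause) correspond to the opposite orientation $\Id_\loch\Left1{g}\7c_g$, i.e.\ $\7c_g(h)=g^{-1}\cdot h\cdot g$, which is the one the paper's own proof silently adopts; since $g$ runs over all of $N(\loch)$, the two versions differ only by the relabelling $g\leftrightarrow g^{-1}$, so this is a convention discrepancy (present in the paper itself) rather than a gap in your argument.
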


\begin{proof}
By definition of $N(\loch)$, an element $g\in \loch$ determines a homotopy 
$\Id_\loch\Left1g\7c_g$, hence according to Lemma~\ref{ho.eq}(d), also 
$\sigma_0 \Left4{x_1\cdot \sigma_1(g)} \sigma_1\circ \7c_g$
and similarly, $\sigma_1\circ \7c_g  \Left5{\sigma_1(g^{-1})\cdot x_2} \sigma_2$.

By Lemma~\ref{ho.eq}(c), we obtain a homotopy 
$\Id_{\loch} = \tau\circ\sigma_0 \Left3{\tau(x_1)} \tau\circ \sigma_1$, so 
by definition of $N(\loch)$, $\tau(x_1)\in N(\loch)$ and $\7c_{\tau(x_1)} = \tau\circ \sigma_1$. 
Hence, if $g=\tau(x_1)^{-1}$, then $\tau\circ \sigma_1' = \tau\circ \sigma_1\circ \7c_g=
\7c_{\tau(x_1)}\circ \7c_g = \Id_\loch$, by Lemma~\ref{N(M)conj}(\ref{Normaux(c)}).
\end{proof}

\begin{lmm}\label{equivsec-b}

Let $\locm \Right1{\iota} \loce \Right1{\tau} \loch$ be an extension of partial groups, and let $\sigma, \sigma' \in \Gamma(\tau)$. Then, the following conditions are equivalent.
\begin{enumerate}[\rm (a)]

\item There are homomorphisms of partial groups $\sigma_0 = \sigma, \sigma_1, \ldots, \sigma_n = \sigma' \colon \loch \to \loce$ and elements $x_1, \ldots, x_n \in \loce_1$ such that $\sigma_0 \Left1{x_1} \sigma_1 \Left1{x_2} \ldots \Left1{x_n} \sigma_n$.

\item There are sections $\sigma_0 = \sigma, \sigma_1, \ldots, \sigma_n = \sigma' \in \Gamma(\tau)$ and elements $x_1, \ldots, x_n \in \loce_1$ such that 
$\sigma_0 \Left1{x_1} \sigma_1 \Left1{x_2} \ldots \Left1{x_n} \sigma_n$.

\item There are sections $\sigma_0 = \sigma, \sigma_1, \ldots, \sigma_n = \sigma' \in \Gamma(\tau)$ and elements $y_1, \ldots, y_n \in \locm_1$ such that 
$\sigma_0 \Left1{\iota(y_1)} \sigma_1 \Left1{\iota(y_2)} \ldots \Left1{\iota(y_n)} \sigma_n$.

\end{enumerate}

\end{lmm}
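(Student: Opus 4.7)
The implications (c) $\Rightarrow$ (b) $\Rightarrow$ (a) are trivial, since sections are homomorphisms and $\iota(\locm_1) \subseteq \loce_1$. The substance lies in the two reverse implications, and my plan is to derive each one using the structural tools already available.

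For (a) $\Rightarrow$ (b), I will proceed by induction along the chain, using Lemma~\ref{EconjLconj} to convert intermediate homomorphisms into sections one at a time from left to right. Suppose at some stage the modified chain has sections $\sigma_0 = \sigma, \sigma_1, \ldots, \sigma_i$ in its initial segment (this is trivially true for $i = 0$). Consider the next triple $\sigma_i \Left1{x_{i+1}} \sigma_{i+1} \Left1{x_{i+2}} \sigma_{i+2}$ in the chain. Post-composing the leftmost homotopy with $\tau$ via Lemma~\ref{ho.eq}(c), and using that $\tau \circ \sigma_i = \Id_\loch$, I obtain $\Id_\loch \Left1{\tau(x_{i+1})} \tau \circ \sigma_{i+1}$, so $\tau(x_{i+1}) \in N(\loch)$ by definition of $N(\loch)$. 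Setting $g = \tau(x_{i+1})^{-1} \in N(\loch)$ and invoking Lemma~\ref{EconjLconj}, the new middle map $\sigma_{i+1} \circ \cj{g}$ is a section, while $\sigma_i$ and $\sigma_{i+2}$ are unchanged. Iterating through $i = 0, 1, \ldots, n-2$ replaces every intermediate $\sigma_j$ by a section (the right endpoint $\sigma_n = \sigma'$ is already a section and is never modified), producing the chain required by (b).

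For (b) $\Rightarrow$ (c), I will adjust each connecting element $x_i$ by a suitable central element of $\loch$ so that its $\tau$-image becomes trivial. Given $\sigma_{i-1} \Left1{x_i} \sigma_i$ with both sides sections, Lemma~\ref{ho.eq}(c) yields $\Id_\loch \Left1{\tau(x_i)} \Id_\loch$, which by definition places $\tau(x_i)$ in $Z(\loch)$. Let $g_i = \tau(x_i)^{-1} \in Z(\loch)$; this element likewise defines a self-homotopy $\Id_\loch \Left1{g_i} \Id_\loch$. Horizontally composing this self-homotopy with $\sigma_{i-1} \Left1{x_i} \sigma_i$ via Lemma~\ref{ho.eq}(d), taking the identity of $\loch$ on the source side, produces a homotopy $\sigma_{i-1} \Left1{x_i \cdot \sigma_i(g_i)} \sigma_i$ whose connecting element satisfies $\tau(x_i \cdot \sigma_i(g_i)) = \tau(x_i) \cdot g_i = 1$. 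Via the twisted cartesian product description of $\loce$ from Theorem~\ref{twistedn-simpl}, the fiber of $\tau$ above $1 \in \loch_1$ coincides with $\iota(\locm_1)$, so $x_i \cdot \sigma_i(g_i) = \iota(y_i)$ for a unique $y_i \in \locm_1$, yielding the chain required by (c).

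The main obstacle will be the bookkeeping in the induction for (a) $\Rightarrow$ (b): at each step one must confirm that Lemma~\ref{EconjLconj} genuinely applies — in particular that $\tau(x_{i+1}) \in N(\loch)$, which depends crucially on the left end of the current triple already being a section — and that modifying only the middle term of each triple preserves the sections constructed earlier in the chain. The implication (b) $\Rightarrow$ (c) is conceptually cleaner and rests on the centrality of $\tau(x_i)$ together with the identification of the fiber of $\tau$ over the base vertex with $\iota(\locm_1)$ supplied by Theorem~\ref{twistedn-simpl}.
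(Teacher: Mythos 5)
Your proposal is correct and follows essentially the same route as the paper: the implication (a)~$\Rightarrow$~(b) is done by the same induction along the chain, applying Lemma~\ref{EconjLconj} with $g=\tau(x_{i+1})^{-1}\in N(\loch)$ to replace each intermediate homomorphism by a section while only perturbing the two adjacent labels, and (b)~$\Rightarrow$~(c) uses the same observation that $\tau(x_i)\in Z(\loch)$ so that multiplying by $\sigma_i(\tau(x_i)^{-1})$ pushes the connecting element into the fibre over the vertex, which is $\iota(\locm_1)$. The points you flag as delicate (the left end of each triple being a section so that $\tau(x_{i+1})\in N(\loch)$, and $\sigma_i\circ\cj{g_i}=\sigma_i$ for central $g_i$) are exactly the ones the paper's argument relies on.
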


\begin{proof}

Assume that (a) holds.  Note that $\sigma_0 \in \Gamma(\tau)$ by assumption. Suppose that $\sigma_j \in \Gamma(\tau)$ for $j = 0, \ldots, i-1$, for some $i \geq 1$. 
Composing with $\tau$, we get  $\Id_{\loch}=\tau\circ\sigma_{i-1} \Left2{\tau(x_i)} \tau\circ \sigma_i$, 
hence, if we denote  $g=\tau(x_i)$, we have  $\tau \circ \sigma_i = \7c_{g^{-1}}$. 
It follows that $\sigma_i' = \sigma_i \circ \7c_g$ is a regular section of $\tau$, and by Lemma \ref{EconjLconj} there is a sequence
$$
\sigma_0 \Left2{x_1} \ldots \Left2{x_{i-1}} \sigma_{i-1} \Left2{x_i'} \sigma_i' \Left2{x_{i+1}'} \sigma_{i+1} \Left2{x_{i+2}} \ldots \Left2{x_n} \sigma_n,
$$
where $x_i' = x_i \cdot \sigma_i(g)^{-1}$ and $x_{i+1}' = \sigma_1(g) \cdot x_{i+1}$. Then, we can apply the same argument repeatedly and show that (b) holds.
\smallskip

Suppose next that condition (b) holds. By assumption, we have a string of homotopies
$$
\sigma_0 \Left1{x_1} \sigma_1 \Left1{x_2} \ldots \Left1{x_n} \sigma_n,
$$
where $\sigma_0, \ldots, \sigma_n \in \Gamma(\tau)$, and $x_1, \ldots, x_n \in \loce_1$. 
We show that each of these homotopies can be modified to an homotopy 
$\sigma_{i-1} \Left1{j(y_i)} \sigma_i$, for some $y_i\in \locm_1$. We proceed by induction. 
suppose that $x_1= j(y_i)\,,\,\dots\,,\,x_{i-1}= j(y_{i-1})$.
  
Composing the $i$th homotopy with $\tau$, we obtain a homotopy 
$\Id_{\loch}\Left2{\tau(x_i)}\Id_{\loch}$. Thus, $g= \tau(x_i)^{-1}$ determines a self-homotopy of the 
identity on $\loch$, hence $g\in Z(\loch)$. 
Notice $x_i'=x_i \cdot \sigma_i(g)$ belongs to the fibre over the vertex of $\loch$, that is,
there is $y_i\in\locm_1$ with $\iota(y_i)=x_i \cdot\sigma_i(g)$. Furthermore, $ \sigma_i \circ \7c_g = \sigma_i$, hence, 
applying Lemma \ref{EconjLconj} to $\sigma_{i-1} \Left1{x_i} \sigma_i \Left1{x_{i+1}} \sigma_{i+1}$ 
 we obtain a new string 
$$
\sigma_0 \Left4{\iota(y_1)}\dots \ \dots\sigma_{i-1}\Left4{\iota(y_i)} 
\sigma_i \Left4{ \sigma_i(g^{-1})x_{i+1}  } 
\sigma_{i+1} \Left4{x_{i+2}}\ldots \  \ldots \Left4{x_n} \sigma_n,
$$
By an iteration of this argument, we prove that (c) holds. Finally, it is clear that if condition (a) 
holds, then so does condition (c).
\end{proof}

\subsection{Derivations}

Let $\loch$ and $\locm$ be partial groups. 
Given an action  $\rho\colon \loch\Right0{}B\Aut(\locm)$ of $\loch$ on $\locm$, 
we will write $\9h\7x= \rho(h)(\7x)$, for simplicity, without further mention of $\rho$, when no confusion is possible (see Example \ref{semidir} for a 
justification of this notation).

\begin{defi}\label{defideriv}
A \emph{derivation} $\theta\colon \loch \Right0{}\locm$ is a 
collection of maps $\{ \theta_n\colon \loch_n\Right0{}\locm_n\}_{n\geq0}$ satisfying
\begin{enumerate}[\rm(i)]
\item $d_0(\theta_n(\7h))= \9{h_1}\theta_{n-1}(d_0\7h)$ and 
 $d_i(\theta_n(\7h))= \theta_{n-1}(d_i\7h)$, $1\leq i\leq n$, 
\item $s_i(\theta_n(\7h))= \theta_{n+1}(s_i\7h)$, $0\leq i\leq n$, 
\end{enumerate}
for all $\7h=[h_1|\dots|h_n]\in \loch_n$, $n\geq0$. 
 We denote by $\Der(\loch,\locm)$  the set of derivations from $\loch$ to $\locm$.

\end{defi}
$\Der(\loch,\locm)$ is a pointed set, with canonical base point defined as the constant derivation $c$, 
$c(h)=1$, for all $h\in\loch_1$.
It turns out that a derivation is uniquely determined by its effect in dimension 1, 
$\theta=\theta_1\colon \loch_1 \Right0{}\locm_1$ by the formula 
$$
\theta_n([h_1|h_2|\dots| h_n]) =
    \bigl[  \theta(h_1) | \9{h_1}\theta(h_2) | \9{h_1h_2}\theta(h_3)| \dots
                             | \9{h_1h_2\dots h_{n-1}}\theta(h_n)  \bigr]\,.
$$

Following the classical definitions, we 
  say that an element  $y \in \locm_1$ defines an equivalence  $\theta \sim \theta'$ between 
two derivations
$\theta, \theta'\in \Der(\loch, \locm)$,  
if for any simplex 
 $\7h=[h_1|h_2|\dots| h_n]\in \loch_n$, $n\geq 0$, there are simplices 
 $$
 \7v_k= \bigl[  \theta(h_1) | \9{h_1}\theta(h_2) |  \dots | 
                   \9{h_1h_2\dots h_{k-1}}\theta(h_k)|  \9{h_1h_2\dots h_{k}}y|
                        \9{h_1h_2\dots h_{k}}\theta'(h_{k+1})|  \dots
                             | \9{h_1h_2\dots h_{n-1}}\theta'(h_n)  \bigr]\in\locm_{n+1}\,,
$$
$ k=0,\dots ,n$, with $\Pi(\7v_0) =\Pi(\7v_1) = \dots = \Pi(\7v_n)$.

We extend $\sim$ to an equivalence relation and define
$$H^{1}(\loch,\locm)= \Der(\loch,\locm)/{\sim}\,. $$ 


\medskip

\begin{defi}
 A \emph{regular derivation} is a derivation $\theta\colon \loch\Right0{}\locm$ with image in $BN(\locm)$. 
\end{defi}
The characterization of regular derivations is simplified by the fact that 
$N(\locm)$ is a group. Indeed, a 
regular derivation reduces to a map $\theta\colon \loch_1\Right0{} N(\locm)$ satisfying
$$
\theta(g\cdot h) = \theta(g) \cdot \9{g}\theta(h)
$$
for all $[g|h]\in \loch_2$. 

\begin{thm}\label{sections-derivations}
Let $\locm\Right1{\imath}\loce\Right1{\tau}\loch$ be an extension of  partial groups.
 \begin{enumerate}[\rm (a)]
\item $\tau$ admits a regular section if and only if a given obstruction class 
 $[\eta]\in H^2(\loch;N(\locm))$ vanishes.
\item A regular section defines an action $\rho\colon \loch\Right0{}\Aut(\locm)$, given by conjugation in $\loce$, and a bijective correspondence between 
the sets $H^1(\loch; \locm)$ and $\frakx(\tau)$. Furthermore, under this bijection regular derivations correspond to regular sections.
\end{enumerate}
\end{thm}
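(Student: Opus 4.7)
The overall plan is to exploit the structure theorem for extensions of partial groups (Theorem~\ref{twistedn-simpl}) to realize $\loce = \locm \times_\phi \loch$ for an $\loch$-twisting pair $\phi = (t, \eta)$, and then read off sections as explicit functions $\theta \colon \loch_1 \to \locm_1$. The crucial dictionary is already implicit in Lemma~\ref{rsplitaction}: a regular section has the form $\sigma(g) = (\theta(g), g)$ with $\theta(g) \in N(\locm)$, and its homomorphism property forces the relation \eqref{trhob}, namely $\theta(gh) = \theta(g) \cdot t(g)(\theta(h)) \cdot \eta(g,h)$.

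For part (a), this relation says precisely that $\eta$ is the ``coboundary'' of $\theta$ in the non-abelian $H^2(\loch; N(\locm))$ associated to the lift $t$ of the outer action; different choices of $t$ yield cohomologous $\eta$, so the class $[\eta]$ is well defined. The equivalence ``a regular section exists iff $[\eta] = 0$'' is then a matter of unwinding definitions in dimension $1$. To finish, one must verify that a map $\theta \colon \loch_1 \to N(\locm)$ satisfying this coboundary relation automatically extends to a simplicial section, which reduces to showing that $[(\theta(h_1), h_1)|\cdots|(\theta(h_n), h_n)] \in \loce_n$ for every $[h_1|\cdots|h_n] \in \loch_n$. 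This is a computation of the same type as \eqref{bla0} in the proof of Lemma~\ref{rsplitaction}, using Lemma~\ref{N(M)conj} to shuffle $N(\locm)$-elements across the simplex.

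For part (b), we may by part (a) fix a regular section $\sigma_0$; Lemma~\ref{rsplitaction} provides the action $\rho \colon \loch \to B\Aut(\locm)$, defined by conjugation in $\loce$, together with an isomorphism $\loce \cong \locm \rtimes_\rho \loch$ sending $\sigma_0$ to the canonical section. Under this identification every section writes uniquely as $\sigma(h) = (\theta(h), h)$, and the conditions (i)--(ii) of Theorem~\ref{twistedn-simpl}(a) (specialized to the trivial cocycle of the semidirect product) become exactly the axioms defining a derivation $\theta \in \Der(\loch, \locm)$; sections landing in $\locn_{\loce}(\locm)$ correspond precisely to regular derivations. To pass to equivalence classes, I would unpack a vertical homotopy $\sigma \Left1{\iota(y)} \sigma'$ via Lemma~\ref{homot=conj}(b): the simplices $\7w_k$ it produces become, upon projection to the $\locm$-component, precisely the simplices $\7v_k$ in the definition of the equivalence $\sim$ on $\Der(\loch, \locm)$ with witness $y \in \locm_1$. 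Combined with Lemma~\ref{equivsec-b}, which reduces $\approx$ on $\Gamma(\tau)$ to chains of vertical homotopies implemented by elements of $\locm$, this yields the desired bijection $\frakx(\tau) \cong H^1(\loch; \locm)$.

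The principal obstacle I expect in both parts is the higher-dimensional coherence step: once the one-dimensional axioms for $\theta$ are imposed (a cocycle relation in (a), a derivation in (b)), one must verify that the candidate simplicial map $\sigma$ does land inside $\loce$ in every dimension. This reduces to proving membership in $\locm$ of explicit simplices built from $\theta$, $t$ and $\eta$ (or $\rho$), and the core technique is the same as in the proof of Lemma~\ref{rsplitaction}: use Lemma~\ref{N(M)conj} to reshuffle $N(\locm)$-elements inside the simplex and thereby reduce to a simplex whose existence is already guaranteed by the description of $\loce_n$ in Theorem~\ref{twistedn-simpl}(a). Once this is handled, the remaining verifications parallel the classical classification of sections of split group extensions by the non-abelian $H^1$.
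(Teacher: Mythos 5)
Your proposal follows essentially the same route as the paper's own proof: identify $\loce$ with a twisted product via Theorem~\ref{twistedn-simpl}, read off part (a) from equation \eqref{trhob} in Lemma~\ref{rsplitaction}, pass to the semidirect product to set up the bijection $\sigma \leftrightarrow \theta$ between sections and derivations, and match vertical homotopies (after reduction via Lemma~\ref{equivsec-b}) with the simplices $\7v_k$ defining equivalence of derivations. The proposal is correct; your explicit flagging of the higher-dimensional coherence check is, if anything, slightly more careful than the paper, which delegates that verification to the computation already carried out in Lemma~\ref{rsplitaction}.
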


\begin{proof}
According to Theorem~\ref{twistedn-simpl}(b), there is an $\loch$-twisting function
  $\phi=(t,\eta)$ and an equivalence of extensions $\loce\cong \locm\times_\phi\loch$. 
The obstruction class to the existence of a regular section is represented by $\eta$, the second component of the $\loch$-twisting pair.  
This is already implicit in the proof of Lemma~\ref{rsplitaction}, where equation 
\eqref{trhob} presents $\eta$ as a coboundary if and only if a regular section exists. This proves (a).
Furthermore \eqref{trhoa} defines a lift of $t$ to an action 
$\rho\colon\loch\Right0{}\Aut(\locm)$. Then  Lemma~\ref{rsplitaction} provides a fibrewise 
equivalence of $\loce$ with the semidirect product 
$\locm\rtimes\loch$. Hence, to prove (b), we can compute the equivalence classes of 
sections of the projection 
$\proj\colon \locm\rtimes\loch\Right0{}\loch$.

We define maps
$$
\xymatrix{ 
\Gamma(\proj) \ar@<.5ex>[r]^-{\delta}    &  \Der(\loch, \locm) \ar@<.5ex>[l]^-{\gamma}
}
$$
as follows.
Let $\sigma \in \Gamma(\proj)$. Write $\sigma([h]) = [(\theta(h),h)]\in  \locm\rtimes\loch$, 
for each $h\in \loch_1$. Then, given a simplex $\7h[h_1|\dots|h_n]\in \loch_n$, 
$\sigma(\7h)$ is a simplex of $\locm\rtimes\loch$, written $[(\theta(h_1),h_1)|\dots |(\theta(h_n), h_n)]$. 
This implies the existence of a simplex
$[\theta(h_1)| \9{h_1}\theta(h_2)|\dots| \9{h_1h_2\dots h_{n-1}}\theta(h_n)]\in \locm_n$
(see Example~\ref{semidir}). 
So, 
$\theta$ extends to a collection of maps 
$\theta=\{\theta_n\colon \loch_n\Right0{}\locm_n\}_{n\geq0}$, with 
$\theta_n([h_1|\dots|h_n]) = [\theta(h_1)| \9{h_1}\theta(h_2)|\dots| \9{h_1h_2\dots h_{n-1}}\theta(h_n)]$. 
It is a routine to check that this is a well defined derivation, so we can define $\delta(\sigma)=\theta$.

Conversely, fix $\theta=\{\theta_n\}_{n\geq0} \in \Der(\loch, \locm)$ and define
$$
\loch \Right3{\sigma_{\theta}} \locm\rtimes \loch
$$
in degree one by $\sigma_{\theta}(g) = (\theta(g), g)$ for all $g \in \loch_1$. Again, by 
Example~\ref{semidir}, it is easy to check that this extends to a well defined homomorphism of partial groups
which is indeed a section of $\proj$. Thus we can define
$\gamma(\theta) = \sigma_{\theta}$.

These maps are inverse to each other. 
Moreover, regular sections clearly correspond to regular 
derivations. This follows at once from the identification 
$\locn_{\locm \rtimes \loch}(\locm) \cong BN(\locm) \rtimes \loch$. Indeed, recall from Lemma \ref{NEL} that there is an induced fibre bundle
$$
BN(\locm) \Right2{} \locn_{\locm \rtimes \loch}(\locm) \Right2{} \loch.
$$
Moreover, for any $h\in\loch_1$, one can easily check that 
$(1,h_1)\in N_{\locm \rtimes \loch}(\locm) $, thus $N(\locm)\rtimes\loch\leq N_{\locm \rtimes \loch}(\locm)$, hence $N(\locm)\rtimes\loch = N_{\locm \rtimes \loch}(\locm)$.


It only remains to check that $\delta$ and $\gamma=\delta^{-1}$ preserve 
the given equivalence relations 
on $\Gamma(\proj)$ and on $\Der(\loch, \locm)$. 
It is enough to show that given sections $\sigma,\sigma'$ and a homotopy 
$\sigma\Left1\eta\sigma'$, then $\theta=\delta{\sigma}$ and $\theta'=\delta(\sigma')$ are 
equivalent derivations, and vice versa. 

According to Lemma~\ref{equivsec-b}, we can assume that $\eta=(\xi,1)\in \locm\rtimes\loch$. 
Now, according to Lemma~\ref{homot=conj}, a homotopy determined by $\eta$ exists if and only if
there are  simplices
\begin{align*}
 \7w_k &= [\sigma(h_1)|\dots |\sigma(h_k)|\eta|\sigma'(h_{k+1})|\dots|\sigma'(h_n)]\\
   & =  [(\theta(h_1),h_1)|\dots |(\theta(h_k),h_k)|(\xi,1)
                     |(\theta'(h_{k+1}),h_{k+1})|\dots|(\theta'(h_n),h_n)]\in \locm\rtimes\loch
\end{align*}
for $k=0,\dots,n$, such that $\Pi(\7w_0) = \dots = \Pi(\7w_n)$.
 These, in turn,  are well defined simplices of $\locm\rtimes\loch$ if and only if 
there are simplices
\begin{align*}
 \7v_k = [\theta(h_1)| \9{h_1}\theta(h_2)|\dots|\9{h_1h_2\dots h_{k-1}}\theta(h_k)
           |\9{h_1h_2\dots h_{k}}\xi| \9{h_1h_2\dots h_{k}}\theta'(h_{k+1})
           | \dots |  \9{h_1h_2\dots h_{n-1}}\theta'(h_n)]\in \locm_{n+1}
\end{align*}
for $k=0,\dots,n$, such that $\Pi(\7v_0) = \dots = \Pi(\7v_n)$.
(Notice that $\Pi(\7w_k) = (\Pi(\7v_k),\Pi(\7h))$.)
Finally, this last condition is by definition equivalent to say that $\xi$ defines an 
equivalence $\theta\sim\theta'$. 
\end{proof}



\end{document}